\newcommand{\mc}{\mathcal}
\newcommand{\mb}{\mathbb}
\newcommand{\R}{\mb R}
\newcommand{\N}{\mb N}
\newcommand{\T}{\mb T}
\newcommand{\eea}{\end{align}}
\renewcommand{\epsilon}{\varepsilon}
\renewcommand{\bar}{\overline}
\newcommand{\bo}{\bold}
\renewcommand{\phi}{\varphi}
\DeclareMathOperator{\Leb}{Leb}
\renewcommand\upsilon{\theta}
\DeclareMathAlphabet{\mathcal}{OMS}{cmsy}{m}{n}
\newcommandx*\din[3][1,3]{F_{\varphi_{\rule[-0.5mm]{-0.36mm}{0.21cm} \scaleobj{0.6}{#1}}, z #2}^{\scaleobj{0.7}{#3}}}
\newcommandx*{\Fvarphi}[2][1=, 2=]{%
  \mathcal{F}%
  \ifx\relax#1\relax
    \else%
      _{\rule[-0.5mm]{-0.86mm}{0.21cm} \scaleobj{0.6}{#1}}
  \fi%
  \raisebox{0.15em}{$\varphi$}
  \ifx\relax#2\relax
    \else%
      _{\rule[-0.5mm]{-0.86mm}{0.15cm} \scaleobj{0.6}{#2}}
  \fi%
}
\newcommandx*\dinfix[3][1,3]{F_{\varphi_{\rule[-0.5mm]{-0.36mm}{0.21cm} \scaleobj{0.6}{#1}}^{*}, z #2}^{\scaleobj{0.7}{#3}}}
\newcommandx*\varp[1]{\varphi_{\rule[-0.5mm]{-0.9mm}{0.31cm} \scaleobj{0.6}{#1}}}
\newcommandx*\varpp[2][1]{\varphi_{\rule[-0.5mm]{-0.9mm}{0.31cm} \scaleobj{0.6}{#1} \scaleobj{0.9}{#2}}}
\newcommandx*\varppfix[2][1]{\varphi_{\rule[-0.5mm]{-0.9mm}{0.31cm} \scaleobj{0.6}{#1} \scaleobj{0.9}{#2}}^{*}}
\newcommandx*\varpfix[1]{\varphi_{\rule[-0.5mm]{-0.9mm}{0.31cm} \scaleobj{0.6}{#1}}^{*}}
\def\W{W^{\scaleobj{0.68}{(N)}}}
\newcommandx*\diny[3][1,3]{F_{\varphi_{\rule[-0.5mm]{-0.16mm}{0.21cm} \scaleobj{0.8}{#1}}, z #2}^{\scaleobj{0.7}{#3}}}
\newcommandx*\varppp[2][1]{\varphi_{\rule[-0.5mm]{-0.1mm}{0.31cm} \scaleobj{0.8}{#1} \scaleobj{0.9}{#2}}}
\newcommand{\normadef}[2][1]{{\rule[-0.5mm]{-0.1mm}{0.28cm} \scaleobj{0.75}{#1^{#2}}}}
\newtcolorbox{HERBERTannotation}{
  colback=yellow!10!white,  
  colframe=white,           
  sharp corners             
}
\newcommand{\mytag}[2]{%
  \text{#1}%
  \@bsphack
  \begingroup
    \@onelevel@sanitize\@currentlabelname
    \edef\@currentlabelname{%
      \expandafter\strip@period\@currentlabelname\relax.\relax\@@@%
    }%
    \protected@write\@auxout{}{%
      \string\newlabel{#2}{%
        {#1}%
        {\thepage}%
        {\@currentlabelname}%
        {\@currentHref}{}%
      }%
    }%
  \endgroup
  \@esphack
}
\newtheorem{theorem}{Theorem}[section]
\newtheorem{corollary}{Corollary}[section]
\newtheorem{lemma}{Lemma}[section]
\newtheorem{proposition}{Proposition}[section]
\theoremstyle{definition}
\newtheorem{definition}{Definition}[section]
\newtheorem{remark}{Remark}[section]
\newtheorem{example}{Example}[section]
\newtheoremstyle{algorithm}
{4pt}
{4pt}
{}
{}
{}
{:}
{\newline}
{}
\newtheorem{algorithm}{Algorithm}
\newcommand{\balgorithm}{\begin{algorithm}\begin{framed}\ }
\newcommand{\ealgorithm}{\end{framed}\end{algorithm}}
\newcommand{\bd}{\begin{definition}}
\newcommand{\ed}{\end{definition}}
\newcommand{\bt}{\begin{theorem}}
\newcommand{\et}{\end{theorem}}
\newcommand{\bp}{\begin{proposition}}
\newcommand{\ep}{\end{proposition}}
\newcommand{\bc}{\begin{corollary}}
\newcommand{\ec}{\end{corollary}} 
\newcommand{\bl}{\begin{lemma}}
\newcommand{\br}{\begin{remark}}
\newcommand{\er}{\end{remark}}
\DeclareMathOperator{\Lip}{Lip}
\begin{document}

\title{Selfconsistent Transfer Operators for Heterogeneous Coupled Maps}
\author{Herbert M.C. Maquera$^1$, Tiago Pereira$^{1}$, and Matteo Tanzi$^2$ \\
  \small $^1$Institute of Mathematics and Computer Sciences (ICMC), University of São Paulo, Brazil \\
  \small $^2$Department of Mathematics, King's College London, United Kingdom
}

\maketitle 

\begin{abstract}
We investigate the dynamics of large heterogeneous network dynamical systems composed of nonlocally coupled chaotic maps. We show that the mean-field limit of such systems is governed by a suitably defined Self-Consistent Transfer Operator (STO) acting on graphons, describing the infinite-size limits of dense graphs, thereby allowing for a rigorous analysis of the system as the network size tends to infinity. We construct appropriate functional spaces on which the STO has an attracting fixed point, which corresponds to the equilibrium state for the mean-field limit, and we draw a connection between the regularity properties of the graphons and the regularity of the fixed points. This work combines operator theory and graph limits tools to offer a framework for understanding emergent behavior in complex networks.
\end{abstract}

\tableofcontents

\maketitle

\section{Introduction}

Interacting dynamical systems provide models for describing a broad range of natural systems, from biology \cite{IzhikevichBook} and chemistry \cite{Kuramoto,Nijholt2022} to engineered systems \cite{RMP}. In these models, $N$ units that compose the system interact with each other according to a specific network structure. These interactions lead to rich phenomena, shaped by the network structure and the specific types of couplings between individual components \cite{PRX,chazottes2005dynamics,pecora1998master,Tanzi_2017}.

 In this paper, we focus on discrete-time coupled chaotic systems.  Interest in this system initially gained momentum in the 1980s, spurred by Kaneko’s work \cite{kaneko1984period}, which provided simulations of interesting phenomena. This work was later studied from an ergodic theoretical perspective by Sinai and Bunimovich \cite{BunSinai}. Over time, the field evolved to focus on two primary cases: coupled maps lattices, where nodes interact with their nearest neighbors in a regular lattice \cite{Keller1, Keller2, chazottes2005dynamics}, and globally coupled maps, where every node interacts with all others \cite{selley2016symmetry}.

Recent research has moved towards a more generalized framework of interactions, specifically where the interaction network is heterogeneous \cite{Koiller-Young,Matteo}. In such systems, nodes receive differing numbers of interactions, resulting in ``heterogeneously coupled'' systems with widely varying connectivity degrees across nodes.

In heterogeneous networks with multiple layers of connectivity, distinct local mean fields emerge depending on a node's degree \cite{bian2025mean}, and certain groups of nodes can exhibit synchronous behaviour while others behave erratically \cite{corder2023emergence}. Extending this observation to the thermodynamic limit presents important challenges. Most notably, the heterogeneous structure of the network requires handling multiple local mean fields simultaneously.

In this work, we investigate heterogeneous networks of coupled expanding maps. In the thermodynamic limit, we describe  the mathematical construct of a \textit{graphon}, which is a measurable, symmetric function $W: [0,1] \times [0,1] \to \mathbb{R}$ that represents the adjacency matrix. Using graphons to study large limits of networked continuous time dynamical systems is a recent development that has provided multiple dynamical insights for large networks, for example in Vlasov and  MacKean-Vlasov  equations, and in generalisation of the Kuramoto (see \cite{Kuehn2020,carrillo2020long,bertoli2025phase, coppini2022note, jabin2025mean} and references therein). In these setups graphons enable the transformation of finite networks into continuous systems, facilitating the study of synchronization and clustering \cite{Bick2021}:  Invariant subspaces generated by graphon automorphisms give information on partially synchronized states, while block-structured graphons model diverse inter-population couplings for heterogeneous networks, providing a foundation for detailed analyses of differentiated cluster dynamics.

In a related extension of the Kuramoto model, the authors of \cite{Bick_2023} introduce higher-order interactions in phase oscillator networks. Using graphon theory, they analyse these networks in the infinite limit, showing how triplet and quadruplet interactions lead to new stability behaviours and bifurcations as coupling parameters vary. Their model reveals that higher-order interactions significantly enrich the dynamics of the network.

Here, we focus on the ergodic theory aspects of networks of coupled discrete time chaotic dynamical systems in the limit of large network size. We define our Selfconsistent Transfer Operator (STO) as the transfer operator of the map \( F_{\nu}: [0,1] \times \mathbb{T} \to [0,1] \times \mathbb{T} \), which fixes fibers in the first coordinate. In contrast, the second coordinate is determined by the dynamics within the fiber in the thermodynamic limit. We consider absolutely continuous measures with respect to the Lebesgue measure with density \( \varphi \). Thus, instead of studying the evolution of measures, we focus on the evolution of densities.

While previous works have developed selfconsistent transfer operator frameworks for globally coupled maps and homogeneous networks \cite{tanzi2023mean,selley2021linear}, the challenge of capturing heterogeneous, structured interactions in the thermodynamic limit remains open. This paper addresses that gap by introducing a functional analytic framework for STOs on graphon-based network limits, which allows us to determine the existence, uniqueness, and stability of fixed points for the STO under a small coupling strength assumption. 
To analyze the STO, we introduce a function space informed by Keller's framework \cite{KellerPV} for generalizing functions of bounded variation to a quasi-compact pseudometric space equipped with a finite Borel measure.

Additionally, we demonstrate that contraction occurs in almost every fiber. This is achieved by estimating two terms via the classical triangle inequality. For the first term, we leverage the fact that dynamics on the fiber \( \{z\} \times \mathbb{T} \) are expanding maps. This allows us to apply results on uniformly expanding maps acting on cones, establishing a consistent contraction property.

\section{Model and Setup}

\subsection{Heterogeneously coupled maps}

\paragraph{Local dynamics} Let $\mathbb{T} = \mathbb{R}\backslash \mathbb{Z}$ and $f\in \mathcal{C}^{3}(\mathbb{T},\mathbb{T})$ be a uniformly expanding circle map, that is, there exists $\sigma >1$ such that $|f'(x)| \ge\sigma$ for all $x\in \mathbb{T}$. Let's denote by $d>1$ the degree of $f$. Let $\hat f$ be a lift of $f$ and consider a collection of $N$ dynamical systems 
$$
x_i(t+1) = \hat f(x_i(t)) \,\, \text{mod 1}, \mbox{~for ~} i = 1, 2, \dots, N.
$$
We refer to this as the local dynamics. Abusing notation  $f$ will denote the map and its lift whenever there is no ambiguity. 

\paragraph{Coupling} Let $h\in \mathcal{C}^{3}(\mathbb{T}\times\mathbb{T},\mathbb{R})$ be the function that describes the interaction between pairs of interacting units, and $\alpha\in\mathbb{R}$ a parameter standing for the coupling strength. We are interested in the dynamics of such expanding maps when they interact on a graph $G$, namely: \begin{equation}\label{Eq:CoupledDynamics}
    x_{i}(t+1)=f(x_{i}(t))+\dfrac{\alpha}{N}\sum_{j=1}^{N}A_{ij}h(x_{i}(t),x_{j}(t))\, \text{~ mod 1, } \mbox{~for ~} i = 1, 2, \dots, N
\end{equation}
where $A = (A_{ij})_{i,j=1}^N$ is the adjacency matrix of a graph $G$ defined as $A_{ij}=1$ if $i$ receives a connection from $j$ and zero otherwise. 
Equation \eqref{Eq:CoupledDynamics} defines a map $F:\mathbb{T}^{N}\to \mathbb{T}^N$ such that the components $F_i(x_1(t),...,x_N(t))=x_i(t+1)$ are given by Eq. (\ref{Eq:CoupledDynamics}).
 In the limit of large $N$, the statistical properties of this class of models have been thoroughly studied when the graph $G$ is all-to-all connected. We are interested in the case when the graph is heterogeneous, that is, the number of connections the $i$th node receives $k_i = \sum_{j=1}^N A_{ij}$, also called degree of the node $i$, varies among the nodes. We will restrict ourselves to dense graphs and study the statistical properties of such heterogeneous systems within the limit of large $N$ via graphons.

\subsection{Self-consistent transfer operators on graphons}

\begin{definition}
A \emph{graphon}  \( (W, \mathcal{S}) \), where \( \mathcal{S} = (A, \mathcal{A}, \mu) \) is a \(\sigma\)-finite measure space with \( \mu(A) > 0 \), and \( W : A \times A \to \mathbb{R} \), is a  measurable function such that \( W \in L^1(A \times A, \mu \times \mu) \). That is, \( W \) is measurable with respect to the product \(\sigma\)-algebra \( \mathcal{A} \otimes \mathcal{A} \), and integrable with respect to the product measure \( \mu \times \mu \). We refer to \( W \) as a graphon over \( \mathcal{S} \).
\end{definition}

For simplicity, we are going to restrict to the case where $\mc S$ is $([0,1],\mc B([0,1]),\Leb)$.
\newcommand\restr[2]{{
  \left.\kern-\nulldelimiterspace 
  #1 
  \vphantom{\big|} 
  \right|_{#2} 
  }}
A graph $G^{(N)}$ on $N$ vertices\footnote{For us graphs will be always defined on the set of vertices $\{1,..., N\}$ with directed edge set contained in $\{1,..., N\}^2$ and associated $N\times N$ adjacency matrix $(A_{ij})$. }  can be represented as a graphon on $[0,1]$ in the following standard way: divide this interval into $N$ subintervals of equal length that are open on the left, closed on the right -- except the first interval $[0,1/N]$ that is closed: 
\[
I_1:=[0,1/N],\quad I_i:=\left(\frac{i-1}{N},\frac iN\right]\quad\quad i=2,...,N;
\] 
define $\W$ by
\begin{equation}\label{Eq:FiniteSizeGraphon}
\restr{\W}{I_i\times I_j}(x,y):=A_{ij}.
\end{equation}
With this definition, it holds that for any $z\in I_i$ 
\begin{equation*}
    \dfrac{1}{N}A_{ij}=\ \int_{\frac{j-1}{N}}^{\frac{j}{N}} dz' \W(z,z').
\end{equation*}
\noindent
It is not hard to show that \eqref{Eq:CoupledDynamics} can be rewritten as:
\begin{equation}\label{Eq:Frefgraphon}
    x_{i}(t+1)=f_{i}(x_{i}(t))+\alpha \sum_{j=1}^{N} \int_{\frac{j-1}{N}}^{\frac{j}{N}} dz' \, \W(z,z') h(x_{i}(t),x_{j}(t))
\end{equation}
for any $z\in I_i$. 
{Graphons arise as natural limit objects enabling a rigorous passage from discrete network interactions to a continuum framework that preserves the heterogeneous structure of the original system; in particular, the convergence $W^{(N)} \to W$ (in an appropriate sense)  justifies replacing discrete adjacency sums with integrals against the limiting graphon, allowing the derivation of the limit equations, which we now introduce.
}

\paragraph{Self-Consistent transfer operators on graphons} 
Let  $\mc M([0,1]\times \T)$ be the set of signed finite measures on $[0,1]\times \T$. Let $\mc M_{\Leb}([0,1]\times \T)\subset \mc M([0,1]\times \T)$ be the subset of measures with marginals on the first coordinate equal to $\Leb$, and $\mc M_{1,\Leb}([0,1]\times \T)\subset \mc M_{\Leb}([0,1]\times \T)$ the probability measures. For $\nu\in \mc M_{1,\Leb}([0,1]\times \T)$, denote by $\{\nu_z\}_{z\in[0,1]}$ its disintegration with respect to the measurable partition $\{\{z\}\times \T\}_{z\in[0,1]}$.  Then, $\{\nu_z\}_{z\in [0,1]}$ satisfies:
\begin{itemize}
\item $z\mapsto \nu_z(A)$ is measurable for every measurable set $A\subset [0,1]\times\T$;
\item $\nu(A)=\int_0^1\nu_z(A)d\Leb(z)$.
\end{itemize}
With an abuse of notation, we identify fibers $\{z\}\times \T $ with $\T$, and $\nu_z$ as measures on $\T$.

\begin{definition}[Self-Consistent Transfer Operator on a Graphon $W$] Given $f \in \mathcal{C}^3(\mathbb{T},\mathbb{T})$ the local dynamics,  $h\in  \mathcal{C}^3(\mathbb{T}\times \mathbb{T},\mathbb{R})$ the coupling function,  and   $W\in L^\infty([0,1],L^1([0,1]))$\footnote{By which we mean that $W(z,\cdot)$ is in $L^1([0,1],\R)$ for almost any $z\in[0,1]$ and the essential supremum of $\|W(z,\cdot)\|_{L^1}$ is bounded.} a graphon, for any measure $\nu\in \mathcal{M}_{1,\Leb}([0,1]\times \mathbb{T})$ and $\Leb$-a.e. $z\in [0,1]$ we define 
\begin{itemize}
    \item $F_{\nu}:[0,1]\times\mathbb{T}\rightarrow [0,1]\times \mathbb{T}$, where $F_{\nu}(z,x)=(z,F_{\nu,z}(x))$  where 
    \begin{equation}\label{Eq:SetDynFiber}
    F_{\nu,z}(x)=f(x)+\alpha\int_{0}^{1}dz' \int_{\mathbb{T}}\ \ d\nu_{z'} (y)\, W(z,z') \, h(x,y)
    \end{equation}
   
    \item $\mathcal{F}:\mathcal{M}_{1,\Leb}([0,1]\times\mathbb{T})\to \mathcal{M}_{1,\Leb}([0,1]\times\mathbb{T})$, given $\nu \in \mathcal{M}([0,1]\times\mathbb{T})$ then
    \begin{equation}\label{Eq:STODef}\mathcal{F}\nu=(F_{\nu})_{*}\nu
    \end{equation}
    where $(F_{\nu})_{*}$ denotes the push-forward under $F_\nu$.
\end{itemize}
\end{definition}
\begin{remark}
It is not hard to show that the above definitions are well posed and that denoting $\{(\mc F\nu)_z\}_{z\in[0,1]}$ the disintegration of $\mc F\nu$ w.r.t. the usual foliation, one has:
 \begin{equation}
(\mathcal{F}\nu)_z=(F_{\nu,z})_*\nu_z.
\end{equation}
\end{remark}

Our main results in this paper concern: justifying $\mc F$ as an object describing the $N\rightarrow \infty$ limit for the sequence of finite-dimensional systems given in \eqref{Eq:CoupledDynamics}; showing that under suitable conditions $\mc F$ has a unique, locally attracting fixed absolutely continuous probability measure. Before introducing our main results, we present some of the functional spaces that we will use.

\subsection{Weak and Strong Function Spaces}

\paragraph{ Wasserstein Distance} Let $\mathsf{M}$ be a compact metric space and denote by $\mathcal{M}_1(\mathsf{M})$ the set of probability measures on $\mathsf{M}$. For $g:\mathsf{M} \to \mathbb{R}$ we consider the distance between measures given by 
\begin{equation}
W^{1}(\mu,\nu) = \sup_{
\substack{\Lip(g)\leq 1 \\ \Vert g \Vert_{\infty}\leq 1 } } \int g \cdot d\mu - \int g \cdot d\nu 
\end{equation}

\noindent
We define 
\begin{equation*}
\Vert \mu \Vert_{W^1}:= W^{1}(0,\mu).
\end{equation*}
and note that $\Vert \cdot \Vert_{W^1}$ is a norm on the space of signed measures.

\paragraph{Disintegrations with Lebesgue Marginal} 
For a measure $\nu\in \mc M_{1,\Leb}$, has disintegration $\{\nu_z\}_{z\in[0,1]}$, where $\nu_z$ a $\Leb$-almost surely a probability measure.
We will mostly be concerned with absolutely continuous measures on $[0,1]\times \T$. Let 
$\varphi\in L^1([0,1]\times\mathbb{T}, \mathbb{R})$ be a probability density, and for convenience, introduce the notation 
$$
\varphi_z(x) : = \varphi(z,x).
$$
We introduce functional spaces useful to control the regularity of $\phi$ in the first and second variables.
We denote by $\|\phi_z \|_{L^1}$ the $L^1(\T)$ norm of $\phi_z$, and recall that it can be expressed as 
\begin{equation}\label{Eq:L^1Norm}
\|\phi_z \|_{L^1}=\sup_{\substack{g\in \mc C^0(\T,\R)\\ \Vert g\Vert_{\infty}\le 1}}\int_\T g(s)\phi_{z}(s) ds.
\end{equation}
\paragraph{Bounded Variation Seminorms of Densities on $\T$}
We will need two seminorms to control the smoothness of the densities in $L^1(\T)$ by bounding the variation (i.e., BV seminorms) and the variation of the derivative. For $\psi\in L^1(\T)$,  define: seminorms
\begin{align*}
|\psi|_{BV^1}&:=\sup_{\substack{g\in \mc C^1(\T,\R)\\ \Vert g\Vert_\infty\le 1}}\int_\T g'(s)\psi(s)ds\\
|\psi|_{BV^2}&:=\sup_{\substack{g\in \mc C^2(\T,\R)\\ \Vert g\Vert_\infty\le 1}}\int_\T g''(s)\psi(s)ds;
\end{align*}
  norms $\|\psi\|_{BV^1}:=|\psi|_{BV^1}+\|\psi\|_{L^1}$ and $\|\psi\|_{ {BV^2}}:=|\psi|_{BV^2}+\|\psi\|_{L^1}$; and spaces 
\begin{align*}
\mc B_{{BV^1}}&:=\{\psi\in L^1(\T):\,\|\psi\|_{BV^1}<\infty\}\\
\mc B_{BV^2}&:=\{\psi\in L^1(\T):\,\|\psi\|_{BV^2}<\infty\}.
\end{align*}
For $M\ge 0$ denote by $\mc B_{{BV^i},M}$ the ball of radius $M$ centered at zero in $\mc B_{{BV^i}}$. 

{
\paragraph{Fiberwise Regularity and Variation Control.}
To describe the regularity of densities of measures in $\mc M_{1,\Leb}([0,1],\T)$ along the fiber direction, we introduce the sets
\begin{equation}\label{Set:RegOnFib}
\tilde {\mc B}_{BV^i,M}:=\{\phi\in L^1([0,1]\times \T,\R):\, \phi_z\in \mc B_{BV^i,M}\mbox{ for a.e. } z\in[0,1]\}, \,\text{for} \ \ i=1,2.
\end{equation}
These sets impose uniform control of the regularity of the function $\phi_{z}$ along each fixed fiber $\{z\} \times \mathbb{T}$. Specifically, they ensure that for almost every $z$, the function $x \mapsto \phi_z(x) $ belongs to the space of functions of bounded variation of order $i$, with norm bounded by $M$. The role of these sets is to ensure uniform control of the smoothness of densities in the fiber direction, i.e., in the variable $x \in \mathbb{T}$, independently of the base parameter $z \in [0,1]$. This fiberwise control plays a crucial role in the construction of invariant sets for the STO, allowing the application of compactness and stability arguments in the vertical direction.

}
\paragraph{Bounded Variation of the Disintegration.} To apply Schauder's fixed point theorem and prove the existence of a fixed point, we need some compactness property of the function spaces invariant under the action of the STO. We need to keep track of the regularity of the disintegrations with respect to the first variable $z\in [0,1]$. Such spaces have been used before in \cite{Galatolo2018} (see \cite{GiuMarTanzi} where the regularity of the disintegration plays a role in determining the invariant measures of the system). 
Suppose that $\varphi\in L^1([0,1],L^1(\T))$\footnote{We could have simply written $\phi\in L^1([0,1]\times \T)$, but we use this notation to emphasise that we are thinking of $\phi(z,x)$ as a mapping $z\mapsto\phi_z(x)$.}, then define the following function
\begin{equation}
\mathtt{osc}_{BV^1}(\varphi,\omega,r):=\mathop{\mathrm{ess~sup}}\limits_{z,\bar{z}\in B(\omega,r)} | \varphi_{\bar{z}}-\varphi_{z}|_{BV^1}
\end{equation}
where $B(\omega,r)$ is the ball with radius $r>0$ and centered in $\omega$, the essential supremum is with respect to  Lebesgue measure $\Leb$ on $[0,1]$, and we used the notation $\phi_z(x):=\phi(z,x)$. The function $\omega\mapsto\mathtt{osc}_{BV^1}(\varphi,\omega,r)$ is lower semi-continuous, and therefore measurable. Thus, we consider the integral 
\begin{equation}
\mathtt{var}_{p,BV^1}(\varphi,r)=\dfrac{1}{r^{p}}\int_0^1\mathtt{osc}_{BV^1}(\varphi,\omega,r) \ d\omega
\end{equation}
\noindent
and take the supremum
\begin{equation}\label{Eq:VarpNorm}
\mathtt{var}_{p,BV^1}(\varphi)=\sup_{r>0} \mathtt{var}_{p,BV^1}(\varphi,r)
\end{equation}

Define the weak norm 
\[
\Vert \varphi \Vert_{``1"}: =\int_0^1\Vert \varphi_{z} \Vert_{W^1} dz
\]
and the weak Banach space (see Definition 14 in \cite{Galatolo2018})
$$ \mathcal{B}^{}_{w}:= \left\lbrace \varphi:[0,1]\times\mathbb{T}\to \mathbb{R}:  \Vert \varphi \Vert_{``1"}< \infty\right\rbrace
$$
and the strong norm 
\[
\Vert \varphi \Vert_{\textit{S}} :=\mathtt{var}_{p,BV^1}(\varphi)+\Vert \varphi \Vert_{``1"}
\]
 and a strong Banach space
$$ \mathcal{B}_{\textit{S}}:= \left\lbrace \varphi:[0,1]\times\mathbb{T}\to \mathbb{R}: \|\varphi\|_{\textit{S}}  < \infty\right\rbrace.$$
We denote by $\mc B_{\textit{S,M}}\subset\mc B_{\textit{S}}$ the ball of $\mc B_{\textit{S}}$ centered at zero with radius $M>0$. 

{
\paragraph{Admissible Set of Regular Densities}
To construct a suitable domain for the application of Schauder's fixed-point theorem, we define the set 
\begin{equation}\label{Eq:SpaceofMeasures}
\mc A_{\bo M}:=\mc B_{\textit{S,M}}\cap \tilde {\mc B}_{BV^1,M_1}\cap \tilde{\mc B}_{BV^2,M_2}\cap \mc M_{1,\Leb}
\end{equation}
where $\bo M=(M_1,M_2,M)$. This set consists of probability densities \( \varphi \) on \( [0,1] \times \mathbb{T} \) that satisfy:

\begin{itemize}
    \item[(i)] \textit{Basewise regularity}: \( \varphi \) lies in a ball of radius \( M \) in the strong Banach space \( \mathcal{B}_{\textit{S}} \), so the map \( z \mapsto \varphi_z \) has controlled variation in the \( \mathrm{BV}^1 \)-seminorm.
    
    \item[(ii)] \textit{Fiberwise regularity}: for almost every \( z \in [0,1] \), the \( \varphi_z \) has controlled generalised first and second derivatives, as it lies in both \( \mathcal{B}_{\mathrm{BV}^1} \) and \( \mathcal{B}_{\mathrm{BV}^2} \), with uniform bounds \( M_1 \) and \( M_2 \);
    
    \item[(iii)] \textit{Marginal constraint}: the marginal of \( \varphi \) in the base coordinate is the Lebesgue measure;
\end{itemize}
These conditions ensure that \( \mathcal{A}_{\bo M} \) forms a suitable functional framework for applying fixed point arguments, leading to the existence of statistically stable states in heterogeneous systems.
}

\paragraph{Definition variation of graphon}
Let $W:[0,1]\times[0,1]\to\R$ be a graphon; if for a.e.\ $z\in[0,1]$ the section $W(z,\cdot)\in L^1([0,1])$, we fix $p\in(0,1]$ and for $\omega\in[0,1]$ and $r>0$ denote by $B(\omega,r)\subset[0,1]$ the ball of radius $r$ centered at $\omega$. We define the local $L^1$ oscillation as
\[
\mathtt{osc}_{L^1}(W,\omega,r)
:=\operatorname*{ess\,sup}_{z,z'\in B(\omega,r)} 
   \|W(z,\cdot)-W(z',\cdot)\|_{L^1([0,1])},
\]
and the $p$--variation in $L^1$ as
\[
\mathtt{var}_{p,L^1}(W,r):=\frac{1}{r^{p}}\int_0^1 \mathtt{osc}_{L^1}(W,\omega,r)\,d\omega,
\qquad 
\mathtt{var}_{p,L^1}(W):=\sup_{r>0}\mathtt{var}_{p,L^1}(W,r).
\]

Several standard classes of graphons satisfy $\mathtt{var}_{p,L^1}(W)<\infty$. 
This holds, for instance, when $z\mapsto W(z,\cdot)$ is H\"older or Lipschitz in $L^1$, since local oscillations scale at most like $|z-z'|^\alpha$; for uniformly continuous graphons with modulus $\omega_W(r)=O(r^p)$; for piecewise-regular or block graphons, where the contribution of discontinuities is controlled by the finite number of jump points; for spatially decaying kernels of the form $W(z,y)=\xi(|z-y|)$ with $\xi$ Lipschitz; and trivially for constant (Erd\H{o}s--R\'enyi) graphons, where the variation vanishes identically.

\section{Main Results}

Our work differs from previous graphon results as we focus on deterministic chaotic maps, in constructing a rigorous STO in a heterogeneous setting.  We establish not only the existence of a thermodynamic limit but also the convergence and stability. Unlike prior STO results  \cite{liverani2019statisticalpropertiesuniformlyhyperbolic, selley2016symmetry}, under further assumption, we show that empirical node statistics are also described by the thermodynamic limit operator, establishing the usefulness of the thermodynamic limit in the finite system size. We start with this result.

\subsection{Relationship between the finite-dimensional system and the STO}

Our next result provides the essential link between the abstract infinite-dimensional dynamics given by the STO and the observable statistical evolution of the finite network dynamical system. In particular, we are going to show that under suitable hypotheses, the STO captures the large $N$ evolution of marginals of measures on $\T^N$ on single coordinates.

Suppose that $\nu\in \mc M_{1,\Leb}([0,1]\times\T)$ with a disintegration $\{\nu_z\}_{z\in [0,1]}$. For every $N\in \N$ pick coordinates at each node distributed according to a triangular array of random variables $\{x_i^{(N)}\}_{N=1,i=1}^{\infty,N}$\footnote{Jointly defined on some unspecfied probability space $(\Omega,\mb P)$.} where we denote by $\bo\mu^{(N)}$ the joint distribution of $(x_1^{(N)},...,x_N^{(N)})\in \T^N$ which is such that $x_i^{(N)}$ is distributed according to the probability measure $\mu_i^{(N)}$ defined as
\[
\mu_i^{(N)}(A):=N\int_{\frac{i-1}{N}}^{\frac{i}{N}}\nu_z(A)dz,
\]
i.e  the marginal of $\mu^{(N)}$ on the $i$-th coordinate is given by $\mu_i^{(N)}$ as define above: $\Pi_i\mu^{(N)}=\mu_i^{(N)}$, where $\Pi_i=(\pi_i)_*$ and $\pi_i:\T^N\rightarrow \T$ is defined as $\pi_i(x_1,...,x_N)=x_i$. 
Notice that with this choice, the measure  $\nu$ naturally prescribes the distribution at each node in the limit $N\rightarrow\infty$ in the sense that for any $z_*\in[0,1]$ at which the function $z\mapsto \nu_z$ is continuous (with respect to the Wasserstein metric on measures),
$
\lim_{N\rightarrow\infty}\Pi_{\lceil z_*N\rceil}\mu^{(N)}=\nu_{z_*}
$ weakly.

Moreover, define

$$(y_1^{(N)},...,y_N^{(N)}):=F(x_1^{(N)},...,x_N^{(N)}),$$ 

with $F$ as  in \eqref{Eq:CoupledDynamics} (recall also the reformulation of $F$ via a graphon $W^{(N)}$ in \eqref{Eq:Frefgraphon}), and notice that  $y_i^{(N)}$ is distributed according to $\Pi_iF_*\mu^{(N)}$. The main result of this section states that $\Pi_{\lceil z_*N\rceil}F_*\mu^{(N)}$ converges weakly to $(\mc F\nu)_{z_*}$, provided that suitable assumptions on the concentration of $\mu^{(N)}$ and on the convergence of $\W$ are satisfied.

\paragraph{Hypothesis (H1) on concentration properties of $\mu^{(N)}$} We assume that the sequence $\bo \mu^{(N)}$ satisfies the following concentration inequalities: for any $C>0$, there are $C_1,C_2>0$  such that for any $N\in \N$ and any function $\psi(x_1,...,x_N)$ on $\T^N$ with values in $\R$ that is $CN^{-1}$-Lipschitz in all of its entries\footnote{This means that for any $1\le i\le N$ and any $x_1, x_2,...,x_N,x_i'\in\T$
\[
|\psi(x_1,...,x_i,...,x_N)-\psi(x_1,...,x_i',...,x_N)|\le CN^{-1}|x_i-x_i'|.
\]} the following holds: 
\begin{equation}\label{Eq:H1}
\bo \mu^{(N)}\left(\left\{x\in \T^N:\,\left|\psi(x)-\mb E_{\mu^{(N)}}[\psi]\right|>\epsilon\right\}\right)\le C_1\exp[-C_2\epsilon^2N] \quad\forall\epsilon>0.
\end{equation}

\begin{remark}\label{Rem:HigherIterates}
(H1) is satisfied if, for example, $\mu^{(N)}$ is a product measure on $\T^N$, i.e. $\mu^{(N)}=\mu_1^{(N)}\otimes...\otimes\mu_N^{(N)}$, as a consequence of McDiarmid's concentration inequality. Notice that in the case where $\psi$ is the average of a Lipschitz function, \eqref{Eq:H1} is implied by Hoeffding's inequality. 

It can also be easily inferred that if the hypothesis (H1) is satisfied by $\mu^{(N)}$, it is also satisfied by the pushed-forward measure $F_*\mu^{(N)}$\footnote{Suppose that $\mu^{(N)}$ satisfies hypothesis (H1), then for any fixed $C>0$ one has that: for every $N\in \N$  and any $\psi:\T^N\rightarrow \R$ that is $CN^{-1}$-Lipschitz in every coordinate \begin{align*}
F_*\mu^{(N)}\left(\left|\psi(x_1,...,x_N)-\mb E_{F_*\mu^{(N)}}[\psi]\right|>\epsilon\right)&=\mu^{(N)}\left(\left|\psi\circ F(x_1,...,x_N)-\mb E_{\mu^{(N)}}[\psi\circ F]\right|>\epsilon\right)\\
&\le \tilde C_1\exp(-\tilde C_2\epsilon^2N)
\end{align*} 
for some $\tilde C_1,\,\tilde C_2>0$ depending on $C>0$ only. To prove the inequality above, one only needs to observe that $\tilde \psi(x_1,...,x_N)=\psi\circ F(x_1,...,x_N)$ is $\tilde CN^{-1}$-Lipschitz in every coordinate for $\tilde C:= C[\|f\|_{\mc C^1}+2|\alpha|\|h\|_{\mc C^1}]$ -- in fact letting $(y_1,...,y_N)=F(x_1,...,x_i,...,x_N)$  and $(y_1',...,y_N')=F(x_1,...,x_i',...,x_N)$ one has $|y_i-y_i'|\le \|f\|_{\mc C^1}+|\alpha|\|h\|_{\mc C^1}$ and for $j\neq i$, $|y_j-y_j'|\le N^{-1}|\alpha|\|h\|_{\mc C^1}$, and therefore, by the Lipschitz property of $\psi$ and triangle inequality, 
\[
|\psi(y_1,...,y_N)-\psi(y_1',...,y_N')|\le CN^{-1}[\|f\|_{\mc C^1}+|\alpha|\|h\|_{\mc C^1}]|x_i-x_i'|+\sum_{j\neq i} CN^{-1}N^{-1}|\alpha|\|h\|_{\mc C^1}|x_i-x_i'|\le \tilde CN^{-1} |x_i-x_i'| .
\]} and therefore by $F_*^n\mu^{(N)}$ for any finite $n\in \N$.
\end{remark}

\paragraph{Hypothesis (H2) on the Convergence of the Graphon} Suppose $W^{(N)}\in L^\infty([0,1],L^1([0,1]))$ is a sequence of graphons. We are going to work under the following convergence hypothesis: there is $E\subset[0,1]$ and $W\in L^\infty([0,1],L^1([0,1]))$ such that
\begin{equation}\tag{H2}\label{Eq:CH}
\lim_{N\rightarrow \infty}\int_{0}^{1} dz \, \W(z_*,z) \int_\T h(x,y)d\nu_{z}(y) = \int_{0}^{1} dz \, W(z_*,z) \int_\T h(x,y)d\nu_{z}(y)\quad\quad\forall x\in\T,\,\forall z_*\in E.
\end{equation}

\begin{remark}\label{Rem:HypH1}
As stated more precisely in Lemma \ref{Lem:H2Suf11} and Lemma \ref{Lem:H2Suf12} below, sufficient conditions for (H2) to hold are: convergence in $L^1$ of the sequences $W^{(N)}(z_*,\cdot)$ for every $z_*\in E$; or 2) weak convergence of $W^{(N)}(z_*,\cdot)$ and global Lipschitz property of $z\mapsto \nu_z$. 1) and 2) allow us to treat different examples of interest in applications.
\end{remark}

\begin{theorem}\label{ConvergEmpiricalMeasure} Fix $\nu$ and assume that for $z_*\in E\subset [0,1]$ there is $W\in L^\infty([0,1],L^1([0,1]))$ such that the convergence hypothesis (H2) holds. Furthermore assume that  $z\mapsto \nu_z$ is Lipschitz in a neighbourhood of each $z_*\in E$. 
Then, for a sequence of measures $\{\mu^{(N)}\}_{N\in\N}$ and satisfying hypothesis (H1), we have

\[
\lim_{N\rightarrow \infty}\Pi_{\lceil z_*N\rceil}F_*\mu^{(N)}=(\mc F\nu)_{z_*}
\]

\noindent
weakly for all $z_*\in E$.
\end{theorem}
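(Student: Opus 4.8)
The plan is to prove weak convergence by testing against Lipschitz functions: since $\T$ is compact, the density of Lipschitz functions in $\mc C^0(\T)$ reduces the claim to showing that, for every Lipschitz $g:\T\to\R$ and every $z_*\in E$,
\[
\E\big[g(y^{(N)}_{i(N)})\big]\;\longrightarrow\;\int_\T g\circ F_{\nu,z_*}\,d\nu_{z_*},\qquad i(N):=\lceil z_*N\rceil ,
\]
because $y^{(N)}_{i(N)}$ has law $\Pi_{i(N)}F_*\mu^{(N)}$ and $(\mc F\nu)_{z_*}=(F_{\nu,z_*})_*\nu_{z_*}$. From \eqref{Eq:CoupledDynamics} we have $y^{(N)}_{i}=f(x^{(N)}_{i})+\Theta^{(N)}_{i}(x^{(N)}_{i})$, where for a \emph{fixed} $x\in\T$ we set $\Theta^{(N)}_{i}(x):=\tfrac{\alpha}{N}\sum_{j=1}^{N}A_{ij}\,h(x,x^{(N)}_j)$, the empirical mean field felt by node $i$ when its own coordinate is set to $x$. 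Using $\mu^{(N)}_j(\cdot)=N\int_{I_j}\nu_z(\cdot)\,dz$ and that $W^{(N)}$ equals $A_{ij}$ on $I_i\times I_j$, one computes that for every $z_*\in I_{i(N)}$ (which holds since $i(N)=\lceil z_*N\rceil$) the mean of the empirical mean field is exactly
\[
\bar\Theta^{(N)}(x):=\E\big[\Theta^{(N)}_{i(N)}(x)\big]=\alpha\int_0^1 W^{(N)}(z_*,z)\int_\T h(x,y)\,d\nu_z(y)\,dz ,
\]
so that by hypothesis (H2), $\bar\Theta^{(N)}(x)\to\alpha\int_0^1 W(z_*,z)\int_\T h(x,y)\,d\nu_z(y)\,dz=F_{\nu,z_*}(x)-f(x)$ for every $x\in\T$ and every $z_*\in E$.

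Next I split the quantity to be estimated as $\mathrm{I}_N+\mathrm{II}_N$, where $\mathrm{I}_N:=\E[g(y^{(N)}_{i(N)})]-\int_\T g\big(f(x)+\bar\Theta^{(N)}(x)\big)\,d\mu^{(N)}_{i(N)}(x)$ (the subtracted term equals $\E[g(f(x^{(N)}_{i(N)})+\bar\Theta^{(N)}(x^{(N)}_{i(N)}))]$, as the integrand depends only on the $i(N)$-th coordinate) and $\mathrm{II}_N:=\int_\T g\big(f(x)+\bar\Theta^{(N)}(x)\big)\,d\mu^{(N)}_{i(N)}(x)-\int_\T g\circ F_{\nu,z_*}\,d\nu_{z_*}$. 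For $\mathrm{I}_N$, Lipschitzness of $g$ gives $|\mathrm{I}_N|\le\Lip(g)\,\E\big[\sup_{x\in\T}|\chi^{(N)}(x)|\big]$ with $\chi^{(N)}(x):=\Theta^{(N)}_{i(N)}(x)-\bar\Theta^{(N)}(x)$. Crucially, (H1) cannot be applied to the observable $g(y^{(N)}_{i(N)})$ directly, since as a function of $(x_1,\dots,x_N)$ it is only $O(1)$-Lipschitz (not $O(N^{-1})$) in its $i(N)$-th entry; however, with $x$ frozen the map $(x_1,\dots,x_N)\mapsto\Theta^{(N)}_{i(N)}(x)$ is $|\alpha|\|h\|_{\mc C^1}N^{-1}$-Lipschitz in \emph{every} coordinate, so (H1) yields $\mb P(|\chi^{(N)}(x)|>\epsilon)\le C_1 e^{-C_2\epsilon^2N}$ for each fixed $x$. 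Since $x\mapsto\chi^{(N)}(x)$ is Lipschitz with constant at most $2|\alpha|\|h\|_{\mc C^1}$, uniformly in $N$ and in the realisation, a union bound over a finite $\epsilon$-net of $\T$ (of $N$-independent cardinality) together with the off-net Lipschitz estimate gives $\mb P(\sup_x|\chi^{(N)}(x)|>\epsilon)\to0$ for each $\epsilon>0$; combined with the crude bound $\sup_x|\chi^{(N)}(x)|\le2|\alpha|\|h\|_\infty$ this yields $\E[\sup_x|\chi^{(N)}(x)|]\to0$, hence $\mathrm{I}_N\to0$.

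For $\mathrm{II}_N$: the maps $x\mapsto g(f(x)+\bar\Theta^{(N)}(x))$ are equi-Lipschitz (via $\|f\|_{\mc C^1}$, $\Lip(g)$, $\|h\|_{\mc C^1}$ and $\sup_N\|W^{(N)}(z_*,\cdot)\|_{L^1}<\infty$, the latter holding under either sufficient condition of Remark~\ref{Rem:HypH1} and for graph-derived graphons) and, by the previous paragraph, converge pointwise on the compact $\T$ to the continuous map $x\mapsto g(F_{\nu,z_*}(x))$; hence this convergence is uniform. On the other hand $\mu^{(N)}_{i(N)}(\cdot)=N\int_{I_{i(N)}}\nu_z(\cdot)\,dz$ is the average of $z\mapsto\nu_z$ over an interval of length $1/N$ containing $z_*$, so the assumed Lipschitz continuity of $z\mapsto\nu_z$ near $z_*$ forces $W^1(\mu^{(N)}_{i(N)},\nu_{z_*})=O(1/N)$, and in particular $\mu^{(N)}_{i(N)}\to\nu_{z_*}$ weakly; since $g\circ F_{\nu,z_*}\in\mc C^0(\T)$ (because $h\in\mc C^3$, the $\nu_z$ are probability measures, and $\|W(z_*,\cdot)\|_{L^1}<\infty$), combining uniform convergence of the integrand with weak convergence of the measures gives $\mathrm{II}_N\to0$. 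Adding $\mathrm{I}_N$ and $\mathrm{II}_N$ concludes the argument.

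I expect $\mathrm{I}_N$ to be the main obstacle. The genuine difficulty is that the empirical mean field is evaluated at the random coordinate $x^{(N)}_{i(N)}$, which is itself one of the variables driving the concentration in (H1); consequently the observable $g(y^{(N)}_{i(N)})$ fails to be $O(N^{-1})$-Lipschitz in that coordinate and (H1) is not directly usable. The resolution is to decouple: freeze the first argument of $h$, apply (H1) uniformly over a fixed finite net of $\T$, and transfer the bound to the random argument through the Lipschitz continuity of $x\mapsto\chi^{(N)}(x)$ that is uniform in $N$ and in the realisation. Everything else — the exact computation of $\bar\Theta^{(N)}$ via the block structure of $W^{(N)}$, the pointwise-to-uniform upgrade in $\mathrm{II}_N$, and the averaging estimate for $\mu^{(N)}_{i(N)}$ — is routine bookkeeping given the hypotheses.
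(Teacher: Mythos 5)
Your proof is correct and follows essentially the same strategy as the paper: test against Lipschitz $g$, freeze the first argument of $h$ so that (H1) applies to the empirical mean field as an $O(N^{-1})$-Lipschitz function, union-bound over an $\epsilon$-net of $\T$, pass to the expected mean field, invoke (H2) for its deterministic limit, and use the local Lipschitzness of $z\mapsto\nu_z$ to get $W^1(\mu^{(N)}_{i(N)},\nu_{z_*})=O(1/N)$. The two presentations differ cosmetically: the paper fixes an $N$-dependent net with $\epsilon=N^{-1/3}$ to extract a rate, whereas you use a fixed net per $\epsilon$ and conclude only qualitatively, which suffices for the statement; and the paper's Step 3 passes through a decoupled double integral $\int d\mu_i^{(N)}(x_i)\int g(\cdot)\,d\mu^{(N)}(x')$ with a claimed $+O(N^{-1})$ correction, while your split $\mathrm{I}_N+\mathrm{II}_N$ takes the intermediate quantity to be $\E\big[g\big(f(x^{(N)}_i)+\bar\Theta^{(N)}(x^{(N)}_i)\big)\big]$ and controls $\mathrm{I}_N$ by $\Lip(g)\,\E\big[\sup_x|\chi^{(N)}(x)|\big]$. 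This is a tidier bookkeeping, since the paper's decoupled double integral is not actually within $O(N^{-1})$ of $\mathcal I_2$ for a general $\mu^{(N)}$ satisfying only (H1) (that replacement itself needs the concentration argument); your route sidesteps that step entirely. Your caveat about $\sup_N\|W^{(N)}(z_*,\cdot)\|_{L^1}<\infty$ is also fine, as it holds automatically for graph-derived graphons, which is the setting of the theorem.
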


\begin{example}[Clustered Network]
Suppose that a network on $N$ nodes is divided into two groups: group one made of nodes $\{1,...,\lfloor N/2\rfloor\}$ and group two made of nodes $\{\lfloor N/2\rfloor+1,...,N\}$. Suppose also that the nodes in group one interact all-to-all among each other with coupling strength $\alpha_1$, and nodes in group two interact all-to-all among each other with coupling strength $\alpha_2$. The nodes interact across groups with strength $\alpha_3$. A graphon representation of these interactions is given by the piecewise constant $\W$:
\[
\W(z,z')=\left\{
\begin{array}{ll}
\alpha_1 & (z,z')\in [0,\frac{\lfloor N/2\rfloor}{N}]\times [0,\frac{\lfloor N/2\rfloor}{N}]\\
\alpha_2 & (z,z')\in (\frac{\lfloor N/2\rfloor}{N},1]\times (\frac{\lfloor N/2\rfloor}{N},1]\\
\alpha_3 & \mbox{otherwise}.
\end{array}
\right.
\]
It is not hard to show that for almost every $z\in[0,1]$ (in fact, for every $z\neq 1/2$), $\W(z,\cdot)\rightarrow W(z,\cdot)$ in $L^1$ where
\[
W(z,z')=\left\{
\begin{array}{ll}
\alpha_1 & (z,z')\in [0,1/2]\times [0,1/2]\\
\alpha_2 & (z,z')\in (1/2,1]\times (1/2,1]\\
\alpha_3 & \mbox{otherwise}.
\end{array}\right.
\]
Picking a measure $\nu\in \mc M_{1,\Leb}$, we want to account for the fact that the two distinct groups might be following different mean-field dynamics, or be initialised according to different initial conditions,  and therefore present different orbit distributions. So we should require $\nu_z=\nu_1$ for $z\in[0,1/2]$ and $\nu_z=\nu_2$ for $z\in (1/2,1]$ with $\nu_1,\,\nu_2$ two probability measures on $\T$ (notice that iterates of $\nu$ under the STO associated to $W$ preserve this property).  Now one can show that this choice of $\W$ and $\nu$ satisfies hypothesis (H2) with $E=[0,1/2)\cup(1/2.1]$ as a consequence of Lemma \ref{Lem:H2Suf11} below. 

This example can be generalised to networks with a higher number of clusters.
\end{example}

{\begin{example}[Network with Spatially Decaying Interactions] Fix $\xi:[0,1]\rightarrow\R$ a Lipschitz function. Consider a network on $N$ nodes such that the coupling strength between node $i$ and $j$ is given by $\xi(\frac{|i-j|}{N})$. A graphon representation of this network is given by 
\[
\W(z,z')=\xi\left(\frac{|i-j|}{N}\right)\mbox{ for }(z,z')\in \left(\frac{i-1}{N},\frac{i}{N}\right]\times\left(\frac{j-1}{N},\frac{j}{N}\right].
\]
One can show that $\W(z,\cdot)$ converges to $W(z,\cdot)=\xi(|z-\cdot|)$ for any $z\in[0,1]$ uniformly and thus in $L^1$. Pick any $\nu$ with $z\mapsto \nu_z$ being Lipschitz. Applying Lemma \ref{Lem:H2Suf11}, we can see that hypothesis (H2) is satisfied with $E=[0,1]$.

This setting is related to coupled map lattices \cite{liverani2004invariant,Keller2}. In  coupled map lattices, the state space is $\mathbb{T}^{\mathbb{Z}}$ and $f: \mathbb{T} \rightarrow \mathbb{T}$ a smooth uniformly expanding map describing the dynamics at each node, and $F: \mathbb{T}^{\mathbb{Z}} \rightarrow \mathbb{T}^{\mathbb{Z}}$ is a product of these maps describing the global uncoupled evolution. Next, the coupling is described by $H_{\varepsilon} : \mathbb{T}^{\mathbb{Z}} \rightarrow \mathbb{T}^{\mathbb{Z}}$ where  $DH_{\varepsilon} = 1 + \varepsilon A$, where 
$A_{ij}$ and its derivatives have elements decaying exponentially fast as a function of the distance of the matrix elements. Finally, the coupled dynamical system considered is $F_{\varepsilon} =  H_{\varepsilon} \circ F$. 
Then, the authors prove that  $F_{\varepsilon}$ has an invariant measure and its projection on every finite block of sites has absolutely continuous marginals. 
By contrast, in the graphon framework, finite subsets of nodes collapse to sets of vanishing Lebesgue measure as $N\to\infty$, so there is no nontrivial analogue of finite block marginals. Instead, the meaningful limit object is the empirical distribution of all nodes, encoded as a measure on $[0,1]\times T$ with disintegration $\{\nu_z\}_{z\in[0,1]}$. Regularity in this setting is expressed fiberwise: for almost every $z$, the conditional measure $\nu_z$ is absolutely continuous with a density of bounded variation (or even $C^2$).

\end{example}

\begin{example}[Erd\"os-Renyi Random Graphs]  An Erd\"os-Renyi random network on $N$ vertices has adjacency matrix with entries $A^N_{ij}$ that are independent Bernoulli random variables with probability $p\in[0,1]$. Consider $\W$ the graphon representation of a sequence of Erd\"os-Renyi random graphs with probability $p\in[0,1]$ and denote by $(\Omega,\mb P)$ the underlying probability space. It is expected that $\W$ should converge to the constant graphon $W$ taking constant value equal to $p$. We are going to show that,  with this limit, hypothesis (H2) becomes true. 

Pick a measure  $\nu$ with constant disintegration, i.e. $\nu_z=\mu$ with $\mu$ a measure on $\T$ for all $z\in[0,1]$. Notice that, in this case, hypothesis (H2) is satisfied if for every $z\in E\subset[0,1]$, $\int_0^1dz'\W(z,z')\rightarrow p$. We are now going to show that this limit holds with $E=[0,1]$ and almost surely in the realisation of the sequence $\W$. For every $N\in \N$ and $1\le i\le N$ define the sets
\[
\Omega_{N,i}:=\left\{\omega\in \Omega:\,\left|\frac{1}{N}\sum_{j=1}^NA^N_{ij}(\omega)-p\right|>N^{-\frac13}\right\}.
\]
Since $\{A_{ij}^{(N)}\}_{j=1}^N$ are i.i.d. Bernoulli random variables, by the Hoeffding inequality 
$
\mb P(\Omega_{N,i})\le 2\exp(-2N^{1/3})
$; therefore, defining $\Omega_N:=\cup_{i=1}^N\Omega_{i,N}$ we have $\mb P(\Omega_N)\le  2N\exp(-2N^{1/3})$. Since $\sum_{N\ge 1}\mb P(\Omega_N)<\infty$, the Borel-Cantelli lemma implies that there is $\Omega'\subset\Omega$ with $\mb P(\Omega')=1$, such that for any $\omega\in \Omega'$ there is $\bar N$ such that for any $N\ge \bar N$, $\omega\in \Omega_N^c$. On the other end, if $\omega\in \Omega_{N}^c$,  by definition of $\W$
\[
\left|\int_{0}^1dz'\W(\omega)(z,z')-p\right|<N^{-\frac13} \quad\quad\forall z\in[0,1],
\]
which implies the claim.
\end{example}

\begin{remark} For the $\nu$ and $W$ satisfying hypothesis (H2) in the examples above, and for $\mu^{(N)}$ verifying hypothesis (H1), one has that for any finite $t\in\N$, the hypotheses (H1)-(H2) are verified also for $\tilde \nu:=\mc F^t\nu$ and $\tilde \mu^{(N)}:=(F^t)_*\mu^{(N)}$: the convergence hypothesis (H2) follows from the fact that $\mc F^t\nu$ has the same regularity properties we assumed for $\nu$, and hypothesis (H1) holds as argued in Remark \ref{Rem:HypH1}. This suggests that one can apply Theorem \ref{ConvergEmpiricalMeasure} iteratively, and that, for sufficiently large $N$, $\mc F^t\nu$ approximates the marginals of $(F^t)_*\mu^{(N)}$ for any bounded $t\in\N$.
\end{remark}

\subsection{Existence and stability of the fixed point for the STO on a graphon}

{

Next, we show that under suitable assumptions, the STO has a fixed point. The main challenge is that each node may experience a distinct local mean field due to its connectivity profile (encoded in the graphon $W$), which is a rather different situation from the full permutation symmetry case studied in previous works. From a technical point of view, we need to find a class of measures $\mc A_{\bo M}$ invariant under the STO to which we can apply Schauder's fixed point theorem; we need some compactness property for this set that we obtain by requiring that $\nu\in \mc A_{\bo M}$ has disintegration $\{\nu_z\}_{z\in[0,1]}$ with some regularity in the $z$ coordinate. This will allow us to deduce compactness (in a suitable sense) of the class $\mc A_{\bo M}$, and apply a fixed point theorem.
}

{

\begin{theorem}[Existence and uniqueness of an attracting fixed point]\label{main}
Let $ f \in  \mathcal{C}^{3}(\mathbb{T},\mathbb{T})$ be a uniformly expanding map,  ${h\in \mathcal{C}^{3}(\mathbb{T}\times\mathbb{T},\mathbb{R})}$,  $W\in L^{\infty}([0,1]^2)$ be a graphon with $\mathtt{var}_{p,L^1}(W)<\infty$ for some $p\in(0,1]$, and $\mc F$ the associated STO defined in \eqref{Eq:STODef}.

\begin{itemize}
	\item[(i)] Consider the admissible set of regular densities $\mc A_{\bo M}$ as in \eqref{Eq:SpaceofMeasures}.  Then there exist $\alpha_0>0$ and $M_0, \,M_1,\,M_2>0$ such that for all $|\alpha|< \alpha_0$,  $\mc F$ has a fixed point $\phi^\star$ in the closure of $\mc A_{\bo M}$ in $\mc B_w$.
	\item[(ii)] Furthermore, the fixed point is unique and locally exponentially stable meaning that there is $\delta>0$ and constants $K>0,\,\rho>0$ such that for every $\phi\in \mc M_{1,\Leb}$ with $\|\phi_z-\phi^\star_z\|_{\mc C^1}\le \epsilon$  for a.e. $z\in[0,1]$,  the following holds
\[
\sup_{x\in \mathbb{T}}\vert (\mc F^t \phi)_{z}(x)-\phi^{*}_{z}(x)\vert \le K e^{- \rho t}. 
\]
\end{itemize}

\end{theorem}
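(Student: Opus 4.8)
The plan is to derive (i) from Schauder's fixed point theorem applied to the convex set $\mc A_{\bo M}$, which the regularity constraints in its definition render compact in the weak topology of $\mc B_w$, and to derive (ii) by showing that in a $\mc C^1$–neighbourhood of $\phi^\star$ the operator $\mc F$ contracts fiberwise, combining the hyperbolicity of the individual fiber maps with the smallness of $\alpha$. The observation underlying everything is that, for $|\alpha|<\alpha_0$ small, every fiber map
\[
F_{\nu,z}(x)=f(x)+\alpha\int_0^1 W(z,z')\int_\T h(x,y)\,d\nu_{z'}(y)\,dz'
\]
is a $\mc C^3$ circle map of degree $d$ with $\inf_x|F_{\nu,z}'(x)|\ge\sigma-|\alpha|\,\|W\|_{L^\infty(L^1)}\|h\|_{\mc C^1}>1$, uniformly in $z\in[0,1]$ and in $\nu\in\mc M_{1,\Leb}$; hence all the $F_{\nu,z}$ lie in one fixed class $\mc G$ of uniformly expanding maps, and we may invoke the standard theory of their transfer operators $L_g$ acting on Birkhoff cones (uniform Lasota--Yorke inequalities, uniform exponential contraction of zero-average functions, Hilbert-metric contraction), keeping in mind that $(\mc F\phi)_z=L_{F_{\phi,z}}\phi_z$.

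\emph{Part (i).} First I would establish forward invariance of $\mc A_{\bo M}$ (or of a fixed power of $\mc F$ on it) for suitable radii. The marginal constraint is automatic since $F_\nu$ fixes fibers. Fiberwise, the $\mc C^3$ Lasota--Yorke inequalities $\|L_g\psi\|_{BV^1}\le\theta_1\|\psi\|_{BV^1}+C_1\|\psi\|_{L^1}$ and $\|L_g\psi\|_{BV^2}\le\theta_2\|\psi\|_{BV^2}+C_2\|\psi\|_{BV^1}$ (with $\theta_i<1$, constants uniform over $\mc G$) make $\tilde{\mc B}_{BV^1,M_1}\cap\tilde{\mc B}_{BV^2,M_2}$ invariant once $M_1\ge C_1/(1-\theta_1)$ and then $M_2\ge C_2M_1/(1-\theta_2)$. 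For the strong norm, $\|\mc F\phi\|_{``1"}\le1$ trivially, and the substantive estimate is on $\mathtt{var}_{p,BV^1}(\mc F\phi)$: for $z,\bar z\in B(\omega,r)$ I would split
\[
(\mc F\phi)_{\bar z}-(\mc F\phi)_z=L_{F_{\phi,\bar z}}(\phi_{\bar z}-\phi_z)+\bigl(L_{F_{\phi,\bar z}}-L_{F_{\phi,z}}\bigr)\phi_z,
\]
bound the first term via the uniform contraction of $L$ on zero-average functions (note $\int(\phi_{\bar z}-\phi_z)=0$ and $\|u\|_{L^1}\le|u|_{BV^1}$ for zero-average $u$; after passing to a fixed iterate the contraction factor can be taken strictly below $1$ uniformly over $\mc G$ by the cone argument), and the second via the perturbative bound $\|F_{\phi,\bar z}-F_{\phi,z}\|_{\mc C^2}\le|\alpha|\,\|h\|_{\mc C^3}\,\|W(\bar z,\cdot)-W(z,\cdot)\|_{L^1}$, which turns into $|(L_{F_{\phi,\bar z}}-L_{F_{\phi,z}})\phi_z|_{BV^1}\le C|\alpha|M_2\,\|W(\bar z,\cdot)-W(z,\cdot)\|_{L^1}$. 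Taking essential suprema over $B(\omega,r)$, integrating in $\omega$, and dividing by $r^p$ yields (after passing to the iterate) an estimate of the form $\mathtt{var}_{p,BV^1}(\mc F^m\phi)\le\theta\,\mathtt{var}_{p,BV^1}(\phi)+C|\alpha|M_2\,\mathtt{var}_{p,L^1}(W)$ with $\theta<1$ --- this is exactly where $\mathtt{var}_{p,L^1}(W)<\infty$ is used --- so taking $M$ large closes the invariance. Now $\mc A_{\bo M}$ is convex (an intersection of balls with $\mc M_{1,\Leb}$), its closure in $\mc B_w$ is compact by the Helly-type argument of \cite{Galatolo2018} (the $\mathtt{var}_{p,BV^1}$ bound together with the fiberwise $BV^1$ bound gives $\|\cdot\|_{``1"}$-precompactness), and $\mc F$ is $\mc B_w$-continuous because $\phi\mapsto\int_0^1W(z,z')\int_\T h(\cdot,y)\,d\phi_{z'}(y)\,dz'$ is weakly continuous and push-forward depends continuously on the map. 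Schauder then gives $\phi^\star\in\overline{\mc A_{\bo M}}$ with $\mc F\phi^\star=\phi^\star$; lower semicontinuity of the seminorms gives $\phi^\star_z\in\mc B_{BV^2,M_2}$ a.e., and since $\phi^\star_z=L_{F_{\phi^\star,z}}\phi^\star_z$ with $F_{\phi^\star,z}\in\mc G$, the fixed density is in fact $\mc C^2$, bounded below, and in the interior of the invariant cone, uniformly in $z$.

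\emph{Part (ii).} Let $\phi\in\mc M_{1,\Leb}$ with $a_0:=\mathrm{ess\,sup}_z\|\phi_z-\phi^\star_z\|_{\mc C^1}\le\epsilon$, and set $a_t:=\mathrm{ess\,sup}_z\|(\mc F^t\phi)_z-\phi^\star_z\|_{\mc C^1}$. For fixed $z$, write $g_s:=F_{\mc F^{s-1}\phi,z}$ and $g^\star:=F_{\phi^\star,z}$, so that $(\mc F^t\phi)_z=L_{g_t}\cdots L_{g_1}\phi_z$ and $\phi^\star_z=L_{g^\star}^t\phi^\star_z$. Using $L_{g^\star}^{k-1}\phi^\star_z=\phi^\star_z$ I would telescope:
\[
(\mc F^t\phi)_z-\phi^\star_z=L_{g_t}\cdots L_{g_1}(\phi_z-\phi^\star_z)+\sum_{k=1}^{t}L_{g_t}\cdots L_{g_{k+1}}\bigl(L_{g_k}-L_{g^\star}\bigr)\phi^\star_z.
\]
The first term is a composition of operators in $\mc G$ applied to a zero-average function, so the cone/Hilbert-metric contraction bounds it by $C\Lambda^t a_0$ with $\Lambda<1$ uniform over $\mc G$ (this uses that $\phi^\star_z$, hence $\phi_z$ for $\epsilon$ small, lies in the interior of the invariant cone). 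In each summand of the second term, $(L_{g_k}-L_{g^\star})\phi^\star_z$ has zero integral and $\mc C^1$-norm $\le C\|g_k-g^\star\|_{\mc C^2}\|\phi^\star_z\|_{\mc C^2}\le C|\alpha|\,a_{k-1}$ (perturbative bound on $F$ together with $\|W\|_{L^\infty(L^1)}<\infty$), and propagating it through $L_{g_t}\cdots L_{g_{k+1}}$ costs a further $C\Lambda^{t-k}$; hence
\[
a_t\le C\Lambda^t a_0+C|\alpha|\sum_{k=1}^t\Lambda^{t-k}a_{k-1}.
\]
Fixing $\Lambda<\Lambda_1<1$ and then $\alpha_0$ small enough that $C|\alpha|/(\Lambda_1-\Lambda)\le\tfrac12$, a straightforward induction gives $a_t\le 2C\epsilon\,\Lambda_1^t$, valid provided $\epsilon\le\delta$ with $\delta$ so small that $2C\epsilon$ keeps every iterate inside the cone (which makes the bootstrap legitimate); then $\sup_x|(\mc F^t\phi)_z(x)-\phi^\star_z(x)|\le\|(\mc F^t\phi)_z-\phi^\star_z\|_{\mc C^1}\le Ke^{-\rho t}$ with $K=2C\epsilon$, $\rho=-\log\Lambda_1$. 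Uniqueness follows by running the same two-term decomposition in the weak norm $\|\cdot\|_{``1"}$ --- using that $\|\cdot\|_{L^1}$ and $\|\cdot\|_{W^1}$ are equivalent on the $BV$-bounded set $\mc A_{\bo M}$ --- which shows a fixed power of $\mc F$ is a strict weak contraction on $\overline{\mc A_{\bo M}}$, so the fixed point of (i) is its only fixed point there.

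\emph{Main obstacle.} The heaviest step is the forward invariance of the constraint $\mathtt{var}_{p,BV^1}(\cdot)\le M$ in (i): one must run a Lasota--Yorke contraction of the $BV^1$-oscillation of the disintegration and simultaneously absorb the oscillation injected by the $z$-dependence of the graphon, balancing the two through the choice of $M$ and $\alpha_0$ --- and this is the only place where $\mathtt{var}_{p,L^1}(W)<\infty$ is genuinely needed. Close behind is making the cone contraction \emph{uniform along orbits of $\mc F$}, where the fiber map changes at every step (so the relevant statement is for compositions $L_{g_t}\cdots L_{g_1}$ of distinct operators in $\mc G$, not iterates of a single one), together with verifying that $\phi^\star_z$ and its small $\mc C^1$-perturbations lie well inside the invariant cone so that the Hilbert-metric estimates apply.
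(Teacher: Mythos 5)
Your Part (i) follows essentially the same route as the paper: the invariance of $\mc A_{\bo M}$ is proved there (Lemma \ref{F-invariante}) with the same two-term split of $(\mc F^n\phi)_{\bar z}-(\mc F^n\phi)_z$ into a ``memory loss'' piece and a ``graphon oscillation'' piece, then Schauder on $\overline{\mc A_{\bo M}}$. One point you gloss over: Schauder applied to an iterate $\mc F^m$ yields a fixed point of $\mc F^m$ only; the paper upgrades this to a fixed point of $\mc F$ by showing $\tilde\theta_a(\phi^\star,\mc F\phi^\star)\le\lambda^m\tilde\theta_a(\phi^\star,\mc F\phi^\star)$ from the projective contraction (Propositions \ref{Pro:fixedPointCone} and \ref{Prop:Contraction}). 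Your write-up jumps to $\mc F\phi^\star=\phi^\star$ without this step; the tools you have would repair it, but it needs to be said.

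For Part (ii) you take a genuinely different route. The paper does not telescope the $t$-step error; it proves a \emph{one-step} contraction of $\mc F$ in the projective pseudometric $\tilde\theta_a(\phi,\psi)=\mathop{\mathrm{ess\,sup}}_z\theta_a(\phi_z,\psi_z)$ (Proposition \ref{Prop:Contraction}), via $\theta_a((\mc F\phi)_z,(\mc F\psi)_z)\le\lambda\,\theta_a(\phi_z,\psi_z)+C\|F_{\phi,z}-F_{\psi,z}\|_{\mc C^2}$, then $\|F_{\phi,z}-F_{\psi,z}\|_{\mc C^2}\lesssim\alpha\,\tilde\theta_a(\phi,\psi)$, so that $\tilde\theta_a(\mc F\phi,\mc F\psi)\le(\lambda+C_\#\alpha)\tilde\theta_a(\phi,\psi)$. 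The conversion to $\sup_x$ is a separate, elementary lemma (Lemma \ref{C0ANDhm}), and the entry of nearby $\mc C^1$-perturbations into the cone is cited from \cite{CastorriniMatteo}. Your Duhamel expansion $(\mc F^t\phi)_z-\phi^\star_z=L_{g_t}\cdots L_{g_1}(\phi_z-\phi^\star_z)+\sum_k L_{g_t}\cdots L_{g_{k+1}}(L_{g_k}-L_{g^\star})\phi^\star_z$ plus a Gronwall recursion for $a_t$ is equivalent in spirit but more explicit; what it buys is not having to invoke the Hilbert-vs-$\mc C^2$ comparison (Prop.\ A.7 of \cite{Tanzi2022}) as a black box, and what it costs is more bookkeeping of norms. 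Be careful about the norm in which you run the recursion: the paper's sequential memory-loss estimate (Lemma \ref{Lem:MemLoss}) is stated in $BV^1$, while your recursion is written for $\mc C^1$; you either need a version of memory loss for sequential compositions in $\mc C^1$ (available for $\mc C^3$ expanding maps, but not in the paper), or you should phrase the recursion in a norm for which both the decay and the perturbation bound $\|(L_{g_k}-L_{g^\star})\phi^\star_z\|\lesssim|\alpha|\,a_{k-1}$ hold simultaneously, keeping in mind that the Birkhoff contraction applies to cone elements and the summand $(L_{g_k}-L_{g^\star})\phi^\star_z$ is a zero-average function, not a cone element, so one has to either decompose it as a difference of cone elements or rely on a Lasota--Yorke argument.
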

}

For the fixed point found above, the following holds:
\begin{corollary}\label{Cor:SmoothnessFixedPoint}
Let  $\phi_*$ be fixed point of $\mc F$ with absolutely continuous disintegration, then $\varphi^{*}_{z} \in \mc C^2 (\mb T, \mb R)$ for a.e. $z\in [0,1]$.
\end{corollary}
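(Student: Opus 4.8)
The plan is to reduce the corollary, fiber by fiber, to the classical fact that a $\mathcal{C}^{3}$ uniformly expanding circle map has a $\mathcal{C}^{2}$ absolutely continuous invariant density. \emph{First}, by the Remark following the definition of $\mc F$, a fixed point $\phi_*$ of $\mc F$ satisfies, for $\Leb$-a.e.\ $z\in[0,1]$, the identity $(F_{\phi_*,z})_*(\phi_*)_z=(\phi_*)_z$ at the level of disintegrations; since $\phi_*$ has absolutely continuous disintegration with densities $\varphi^*_z$, this says precisely that $\varphi^*_z$ is a fixed point of the Perron--Frobenius operator $\mathcal{L}_{F_{\phi_*,z}}$ of the circle map $F_{\phi_*,z}:\T\to\T$, and --- because $\phi_*\in\mc M_{1,\Leb}$ --- that for a.e.\ $z$ it is a genuine probability density ($\varphi^*_z\ge 0$, $\int_\T\varphi^*_z=1$). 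All the offending $\Leb$-null sets (where $W(z,\cdot)\notin L^1([0,1])$, where the disintegration identity fails, or where $\varphi^*_z$ is not a density) can be discarded at once, using $W\in L^\infty([0,1]^2)$ and the almost sure properties of disintegrations.

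\emph{Next}, I would check that $F_{\phi_*,z}$ is, for a.e.\ $z$, a $\mathcal{C}^{3}$ uniformly expanding endomorphism of $\T$. Writing
\[
F_{\phi_*,z}(x)=f(x)+\alpha\int_0^1 W(z,z')\Big(\int_\T h(x,y)\,d(\phi_*)_{z'}(y)\Big)dz',
\]
differentiation under the integral sign up to order three in $x$ is legitimate by dominated convergence: $h\in\mathcal{C}^3(\T\times\T,\R)$ makes $\partial_x^j h$ bounded and continuous for $j\le 3$, the measures $(\phi_*)_{z'}$ are probabilities, and $\mathop{\mathrm{ess\,sup}}_z\|W(z,\cdot)\|_{L^1([0,1])}\le\|W\|_{L^\infty}<\infty$. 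Hence the integral term is $\mathcal{C}^3$ in $x$ with $\mathcal{C}^3$-norm at most $|\alpha|\,\|W\|_{L^\infty}\,\|h\|_{\mathcal{C}^3}$, so $F_{\phi_*,z}\in\mathcal{C}^3(\T,\T)$, has the same degree $d>1$ as $f$, and $|F_{\phi_*,z}'|\ge\sigma-|\alpha|\,\|W\|_{L^\infty}\,\|h\|_{\mathcal{C}^1}>1$ for $|\alpha|<\alpha_0$ (shrinking the $\alpha_0$ of Theorem~\ref{main} if necessary --- harmless, since the fixed point is produced under that smallness assumption anyway).

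\emph{Then}, I would invoke the standard regularity theory for smooth expanding maps: for a $\mathcal{C}^3$ uniformly expanding circle map $T$, the transfer operator $\mathcal{L}_T$ preserves $\mathcal{C}^2(\T)$ (its inverse branches are $\mathcal{C}^3$ and $1/|T'|$ pulled back along a branch is $\mathcal{C}^2$) and obeys a Lasota--Yorke inequality on the pair $(\mathcal{C}^2(\T),\mathcal{C}^1(\T))$; the compact embedding $\mathcal{C}^2\hookrightarrow\mathcal{C}^1$ then yields quasi-compactness, so $\mathcal{L}_T^{\,n}\mathbf{1}$ converges in $\mathcal{C}^2(\T)$ to the unique strictly positive fixed density, which is the unique absolutely continuous invariant probability density of $T$ (uniqueness because a $\mathcal{C}^1$ expanding endomorphism of $\T$ of degree $d\ge2$ is topologically mixing). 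Applied with $T=F_{\phi_*,z}$ and combined with the first step, this forces $\varphi^*_z$ to coincide with that density; hence $\varphi^*_z\in\mathcal{C}^2(\T,\R)$ for $\Leb$-a.e.\ $z\in[0,1]$.

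I do not expect a genuine obstacle here: the argument is the fiberwise instance of the textbook principle ``$\mathcal{C}^{r}$ expanding $\Rightarrow$ a.c.i.m.\ density in $\mathcal{C}^{r-1}$'' with $r=3$. The only points needing mild care are arranging a single $\Leb$-full-measure set of $z$ on which the first two steps all hold, and justifying the differentiation under the integral sign; both are routine consequences of $W\in L^\infty$ and $h\in\mathcal{C}^3$.
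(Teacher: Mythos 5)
Your proposal is correct and follows essentially the same route as the paper: reduce to the fiberwise fixed-point equation $(F_{\phi_*,z})_*\varphi^*_z=\varphi^*_z$, observe via Lemma~\ref{F-Expanding} that $F_{\phi_*,z}$ is a $\mathcal{C}^3$ uniformly expanding endomorphism uniformly in $z$, and invoke the classical ``$\mathcal{C}^3$ expanding $\Rightarrow$ $\mathcal{C}^2$ invariant density'' result (the paper cites Krzyzewski, Theorem~1). The only cosmetic difference is that the paper's version (Corollary~\ref{Cor:C2normFixed}) phrases everything in terms of the $n$-fold composition $F_{\phi^\star_{n-1},z}\circ\cdots\circ F_{\phi^\star,z}$ because at that stage only a fixed point of $\mc F^n$ has been produced, whereas you correctly specialize to $n=1$ since the statement already assumes a fixed point of $\mc F$; both are the same argument.
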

{
This result guarantees the existence of a fixed density under a mild regularity and a small coupling condition, showing that the collective behavior of the system can settle into a statistically stable configuration. This fixed point represents a Selfconsistent equilibrium in which the distribution of states across the network aligns with the distribution induced by the dynamics. Proving the existence of such a fixed point provides a foundation for analyzing long-term behaviors, such as stability and convergence of empirical distributions which will be addressed in the following sections.

\subsection{Lipschitz Continuity of the STO}

The next result shows that the STO is Lipschitz both in the measure it acts on and on the graphon that defines it.
\begin{theorem}[Lipschitz Continuity of $\mc F$]\label{convergence_STO}
Let $ f \in  \mathcal{C}^{1}(\mathbb{T},\mathbb{T})$ be an expanding map, ${h\in \mathcal{C}^{1}(\mathbb{T}\times\mathbb{T},\mathbb{R})}$ the coupling function. Then the mapping $(W,\phi)\mapsto \mc F\phi$ is Lipschitz continuous from $L^1([0,1],L^\infty([0,1]))\times \mc B_w$ to $\mc B_w$, i.e. there exists a constant $K>0$ such that for any $W,\, \tilde W\in L^1([0,1],L^\infty([0,1]))$, calling $\mc F$ and $\tilde{\mc F}$ the STO associated to $W$ and $\tilde W$ respectively, one has
\[
\|\mc F\phi-\tilde{\mc F}\tilde \phi\|_{``1"}\le K\left[\|W-\tilde W\|_{L^1}+\|\phi-\tilde \phi\|_{``1"}\right]
\]
where $K$ depends on $\|\tilde W\|_{L^1([0,1],L^\infty([0,1]))}$.
\end{theorem}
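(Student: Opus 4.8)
The plan is to unwind the definition of the weak norm $\|\cdot\|_{``1"}=\int_0^1\|\cdot\|_{W^1}\,dz$ and reduce everything to a fiberwise estimate: for a.e.\ $z$, compare $(\mc F\phi)_z=(F_{\phi,z})_*\phi_z$ with $(\tilde{\mc F}\tilde\phi)_z=(\tilde F_{\tilde\phi,z})_*\tilde\phi_z$ in the Wasserstein distance $W^1$ on $\T$. First I would split this via the triangle inequality into $W^1\big((F_{\phi,z})_*\phi_z,(F_{\phi,z})_*\tilde\phi_z\big)+W^1\big((F_{\phi,z})_*\tilde\phi_z,(\tilde F_{\tilde\phi,z})_*\tilde\phi_z\big)$. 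The first term is controlled because pushforward by a fixed Lipschitz map is Lipschitz in $W^1$ with constant $\Lip(F_{\phi,z})\le \|f\|_{\mc C^1}+|\alpha|\|h\|_{\mc C^1}\|W\|_{L^1([0,1],L^\infty([0,1]))}$ (this uses only $f,h\in\mc C^1$ and the $L^\infty$ bound on $W(z,\cdot)$ against the $L^1$ mass of $\phi$); integrating in $z$ gives a term bounded by a constant times $\|\phi-\tilde\phi\|_{``1"}$.

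For the second term, both pushforwards act on the \emph{same} measure $\tilde\phi_z$, so by the standard bound $W^1(G_*\mu,\tilde G_*\mu)\le \|G-\tilde G\|_{\infty}$ for probability $\mu$, it suffices to estimate $\sup_{x\in\T}|F_{\phi,z}(x)-\tilde F_{\tilde\phi,z}(x)|$. From \eqref{Eq:SetDynFiber} the $f(x)$ terms cancel and what remains is
\[
\alpha\left|\int_0^1\!\!\int_\T h(x,y)\,\big[W(z,z')\,d\phi_{z'}(y)-\tilde W(z,z')\,d\tilde\phi_{z'}(y)\big]\,dz'\right|.
\]
I would add and subtract the mixed term $W(z,z')\,d\tilde\phi_{z'}(y)$ (or $\tilde W(z,z')\,d\phi_{z'}(y)$): the piece with $W-\tilde W$ is bounded by $|\alpha|\,\|h\|_\infty\int_0^1|W(z,z')-\tilde W(z,z')|\,dz'$, which after integrating in $z$ yields $|\alpha|\,\|h\|_\infty\,\|W-\tilde W\|_{L^1([0,1],L^\infty([0,1]))}$ (indeed $L^1$ in $z'$ is dominated by $L^\infty$ in $z'$ on $[0,1]$); the piece with $\phi-\tilde\phi$ is, for each fixed $x$, $|\alpha|$ times an integral of $\tilde W(z,z')$ against $\int_\T h(x,\cdot)\,d(\phi_{z'}-\tilde\phi_{z'})$, and since $h(x,\cdot)/(\|h\|_{\mc C^1}+1)$ is an admissible test function for $W^1$ on $\T$, this is bounded by $|\alpha|(\|h\|_{\mc C^1}+1)\int_0^1|\tilde W(z,z')|\,\|\phi_{z'}-\tilde\phi_{z'}\|_{W^1}\,dz'$. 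Bounding $|\tilde W(z,z')|$ by $\|\tilde W(z,\cdot)\|_{L^\infty}$ and integrating in $z$ gives $\le |\alpha|(\|h\|_{\mc C^1}+1)\,\|\tilde W\|_{L^1([0,1],L^\infty([0,1]))}\,\|\phi-\tilde\phi\|_{``1"}$.

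Collecting the three contributions produces the claimed inequality with $K$ depending only on $\|f\|_{\mc C^1}$, $\|h\|_{\mc C^1}$, $|\alpha|$, and $\|\tilde W\|_{L^1([0,1],L^\infty([0,1]))}$ (the dependence on $\|W\|$ in the first term can be absorbed once one notes the bound is only needed for $W$ close to $\tilde W$, or simply stated in terms of both norms and then specialised). The only mildly delicate points are measurability in $z$ of all the fiberwise quantities — which follows from the measurability of disintegrations and Fubini — and checking that $h(x,\cdot)$, after normalisation by its $\mc C^1$ norm, is a legitimate competitor in the $W^1$ supremum (it has Lipschitz constant and sup norm both $\le 1$ after dividing by $\|h\|_{\mc C^1}+1$, uniformly in $x$). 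I expect the main obstacle to be purely bookkeeping: keeping the two different norms on $W$ (namely $L^\infty$ in the second slot versus $L^1$) straight so that each term lands against the correct one, and making sure the constant genuinely depends only on $\|\tilde W\|$ and not on $\|W\|$.
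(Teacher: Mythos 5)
Your proposal is correct and, modulo notation, is the same proof as in the paper: the paper also splits $\|(\tilde F_{\tilde\phi,z})_*\tilde\phi_z-(F_{\phi,z})_*\phi_z\|_{W^1}$ through the intermediate term $(F_{\phi,z})_*\tilde\phi_z$, bounds the same-measure/different-maps piece by $\sup_x|\tilde F_{\tilde\phi,z}(x)-F_{\phi,z}(x)|$ (then adds/subtracts the mixed $\tilde F_{\phi,z}$ to separate a $\|W-\tilde W\|_{L^1}$ contribution from a $\|\phi-\tilde\phi\|_{``1"}$ contribution), and bounds the same-map/different-measures piece by $\sup|F'_{\phi,z}|\cdot\|\phi_z-\tilde\phi_z\|_{W^1}$ using $\Lip(g\circ F_{\phi,z})\le\Lip(F_{\phi,z})$. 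Your parenthetical observation that the resulting constant a priori also involves $\|W\|$ — and must be reduced to a dependence on $\|\tilde W\|$ alone by writing $\|W\|\le\|\tilde W\|+\|W-\tilde W\|$ and absorbing the cross term (using $\|\phi-\tilde\phi\|_{``1"}\le 2$) — is a genuine subtlety that the paper's own proof leaves implicit, so you were right to flag it.
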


\bigskip
{\bf Acknowledgments}. We thank Alice de Lorenci, Bella Figliaggi, and 
Zheng Bian for enlightening discussions. TP was supported by the FAPESP Grant No. 2013/07375-0, Serrapilheira Institute (Grant No. Serra-1709-16124), and CNPq grant 312287/2021-6. TP and MT thank the support of EPSRC-FAPESP Grant No. 2023/13706. MT thanks the support of EPSRC grant number UKRI1021.

\section{Proofs}

\subsection{Convergence of the finite dimensional system to the STO} 

Before proceeding with the proof of Theorem \ref{ConvergEmpiricalMeasure}, we give two lemmas that elucidate hypothesis (H2).

\begin{lemma}\label{Lem:H2Suf11} Assume that there is $E\subset [0,1]$ such that for every $z_*\in E$,  $W^{(N)}(z_*,\cdot)\rightarrow W(z_*,\cdot)$ in $L^1([0,1])$ for $N\rightarrow \infty$. Then (H2) is satisfied. 
\end{lemma}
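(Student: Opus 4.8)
The plan is to reduce (H2) to a one-line $L^1$--$L^\infty$ duality estimate. Fix $x\in\T$ and $z_*\in E$, and set
\[
G(z):=\int_\T h(x,y)\,d\nu_z(y),\qquad z\in[0,1].
\]
Since $h\in\mathcal{C}^3(\T\times\T,\R)$ is in particular continuous on the compact set $\T\times\T$, it is bounded, so $|G(z)|\le\|h\|_\infty$ for a.e.\ $z$; moreover $G$ is measurable, because $z\mapsto\nu_z$ is measurable (being a disintegration) and $h(x,\cdot)$ is bounded continuous. Hence $G\in L^\infty([0,1])$. Both $W^{(N)}(z_*,\cdot)$ and $W(z_*,\cdot)$ lie in $L^1([0,1])$ by the standing assumption $W^{(N)},W\in L^\infty([0,1],L^1([0,1]))$, so the integrals appearing in \eqref{Eq:CH} are well defined.

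Next I would estimate, for each $N$, using Hölder's inequality,
\[
\left|\int_0^1 W^{(N)}(z_*,z)\,G(z)\,dz-\int_0^1 W(z_*,z)\,G(z)\,dz\right|
\le \|G\|_\infty\,\big\|W^{(N)}(z_*,\cdot)-W(z_*,\cdot)\big\|_{L^1([0,1])}.
\]
By the hypothesis of the lemma the right-hand side tends to $0$ as $N\to\infty$ for every $z_*\in E$, which is precisely the convergence asserted in \eqref{Eq:CH}. Since $x\in\T$ and $z_*\in E$ were arbitrary, (H2) holds.

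There is essentially no obstacle here: the statement is a soft consequence of dominated control, and the only two points to be careful about are (a) that $G$ is a genuine bounded measurable function — which rests on the measurability built into the disintegration $\{\nu_z\}$ together with continuity of $h$ — and (b) that no uniformity in $x$ is needed, since (H2) is asserted pointwise in $x$; in fact $\|G\|_\infty\le\|h\|_\infty$ is already uniform in $x$, so the same estimate gives convergence uniformly over $x\in\T$, which is convenient when this lemma is used inside the proof of Theorem \ref{ConvergEmpiricalMeasure}.
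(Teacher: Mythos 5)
Your proof is correct and follows the same route as the paper's: both reduce (H2) to an $L^1$--$L^\infty$ Hölder estimate, bounding the inner integral against $d\nu_z$ by $\|h\|_\infty$ and then using the hypothesised $L^1$ convergence of $W^{(N)}(z_*,\cdot)$. Your additional remarks on the measurability of $G$ and the uniformity in $x$ are accurate observations but do not change the argument.
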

\begin{proof}
By H\"older inequality, for any $z_*\in[0,1]$ and $x\in \T$
\begin{align*}
&\left|\int_{0}^{1} dz \, \W(z_*,z) \int_\T h(x,y)d\nu_{z}(y) - \int_{0}^{1} dz \, W(z_*,z) \int_\T h(x,y)d\nu_{z}(y)\right| \le\\
&\quad\quad\le \|h\|_{\mc C^0}\|\W(z_*,\cdot)-W(z_*,\cdot)\|_{L^1}.
\end{align*}
One can see that if $z_*\in E$, $\|\W(z_*,\cdot)-W(z_*,\cdot)\|_{L^1}\rightarrow 0$ for $N\rightarrow \infty$ and hypothesis (H2) holds.
\end{proof}

Requiring almost everywhere convergence of the graphon in $L^1$ is too restrictive in some cases of interest (e.g. for Erd\"os-R\'enyi networks). Therefore we provide the following alternative convergence criterion.

\begin{lemma}\label{Lem:H2Suf12}  Assume that there is $E\subset [0,1]$ such that for every $z_*\in E$: i) $W^{(N)}(z_*,\cdot)\rightarrow W(z_*,\cdot)$ weakly for $N\rightarrow \infty$\footnote{As for measures, we mean that \[
\lim_{N\rightarrow\infty } \sup_{
\substack{\Lip(g)\leq 1 \\ \Vert g \Vert_{\infty}\leq 1 } } \int_0^1dz' g(z')[W^{(N)}(z_*,z')-W(z_*,z')]=0
\]}, and ii)   $\nu \in \mc M_{1,\Leb}([0,1]\times\T)$ has a disintegration $\{\nu_z\}_{z\in[0,1]}$ that is Lipschitz on the whole of $[0,1]$. Then (H2) is satisfied.
\end{lemma}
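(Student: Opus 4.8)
The plan is to deduce (H2) from the weak convergence in hypothesis (i) by producing, for each fixed $x\in\T$ and $z_*\in E$, a single bounded Lipschitz function on $[0,1]$ that serves as an admissible test function. Since (H2) is a pointwise statement in $x$, no uniformity over $x$ is needed. Set
\[
g_x(z):=\int_\T h(x,y)\,d\nu_z(y),\qquad z\in[0,1],
\]
so that (H2) is exactly the assertion that $\int_0^1 W^{(N)}(z_*,z)\,g_x(z)\,dz\to\int_0^1 W(z_*,z)\,g_x(z)\,dz$ as $N\to\infty$.

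The first step is to check that $g_x$ is bounded and Lipschitz on $[0,1]$. Boundedness is immediate, $\|g_x\|_\infty\le\|h\|_{\mc C^0}$, because each $\nu_z$ is a probability measure. For the Lipschitz estimate, fix a constant $c_h$ bounding both $\|h(x,\cdot)\|_\infty$ and the Lipschitz constant of $y\mapsto h(x,y)$ — one may take $c_h=\|h\|_{\mc C^1}$, which is finite since $h\in\mc C^1$ and $\T$ is compact. Then $h(x,\cdot)/\max(c_h,1)$ lies in the test class defining the Wasserstein distance $W^1$ on probability measures on $\T$, so, writing $L_\nu$ for the Lipschitz constant of $z\mapsto\nu_z$ furnished by hypothesis (ii),
\[
|g_x(z)-g_x(\bar z)|\le c_h\,W^1(\nu_z,\nu_{\bar z})\le c_h\,L_\nu\,|z-\bar z|\qquad\text{for all }z,\bar z\in[0,1].
\]
Thus $g_x$ is globally Lipschitz with constant $c_h L_\nu$ (here the assumption that $z\mapsto\nu_z$ is Lipschitz on all of $[0,1]$, not merely locally, is what is used).

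The second step is to apply hypothesis (i). Put $C:=\max(c_h L_\nu,\|h\|_{\mc C^0},1)$; then $g_x/C$ satisfies $\Lip(g_x/C)\le 1$ and $\|g_x/C\|_\infty\le 1$, so $\pm g_x/C$ are admissible test functions in the definition of weak convergence. Applying hypothesis (i) at the point $z_*$ therefore gives
\[
\lim_{N\to\infty}\int_0^1 g_x(z')\,\bigl[W^{(N)}(z_*,z')-W(z_*,z')\bigr]\,dz'=0,
\]
which, upon unwinding the definition of $g_x$, is precisely the convergence asserted in (H2). All the integrals here are well defined, as $g_x$ is continuous and bounded while $W^{(N)}(z_*,\cdot)$ and $W(z_*,\cdot)$ lie in $L^1([0,1])$ for $z_*$ outside a Lebesgue-null set. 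There is no genuine obstacle in this argument; the one point requiring a little care — rather than being a real difficulty — is the compatibility of normalisations: both $W^1$ and the weak convergence in (i) restrict to test functions that are bounded by $1$ in addition to being $1$-Lipschitz, which is exactly why one needs $h(x,\cdot)$, and hence $g_x$, to be bounded (automatic from compactness of $\T$), not just Lipschitz. The remainder is bookkeeping of constants.
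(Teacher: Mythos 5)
Your proof is correct and follows essentially the same route as the paper: define the test function $g_x(z)=\int_\T h(x,y)\,d\nu_z(y)$, use the Lipschitz hypothesis on $z\mapsto\nu_z$ together with $h\in\mc C^1$ to show $g_x$ is bounded Lipschitz, and then apply the weak convergence of $W^{(N)}(z_*,\cdot)$ against $g_x$. The one small point where you are more careful than the paper's own proof: the definition of $\|\cdot\|_{W^1}$ used in hypothesis (i) restricts test functions to be both $1$-Lipschitz and bounded by $1$, and you explicitly normalise by $C=\max(c_h L_\nu,\|h\|_{\mc C^0},1)$ to ensure both constraints hold, whereas the paper only records the Lipschitz bound $K\|h\|_{\mc C^1}$ before applying the $W^1$ duality (implicitly leaving the $\|\cdot\|_\infty$ control as an easy adjustment of the constant).
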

\begin{proof}
If $z\mapsto\nu_z$ is $K$-Lipschitz with respect to the Wasserstein metric, then, since $h$ is $\mc C^1$, this implies that for any $x\in \T$, $z\mapsto \psi(z):=\int_\T h(x,y)d\nu_z(y)$ is $K\|h\|_{\mc C^1}$-Lipschitz; in fact,
\[
\left| \int_\T h(x,y)d\nu_z(y)- \int_\T h(x,y)d\nu_{z'}(y)\right|\le \|h\|_{\mc C^1}\|\nu_z-\nu_{z'}\|_{W^1}\le \|h\|_{\mc C^1}K|z-z'|.
\]
This implies that 
\begin{align*}
&\left|\int_{0}^{1} dz \, \W(z_*,z) \int_\T h(x,y)d\nu_{z}(y) - \int_{0}^{1} dz \, W(z_*,z) \int_\T h(x,y)d\nu_{z}(y)\right| \le\\
&\quad\quad  \le \left|\int_{0}^{1} dz \, (\W(z_*,z)-W(z_*,z))\psi(z)\right| \\
&\quad\quad  \le \|h\|_{\mc C^1}K\|\W(z_*,\cdot)-W(z_*,\cdot)\|_{W^1},
\end{align*}
and it is easy to see from the above that hypothesis (H2) holds.
\end{proof}

\begin{proof}[Proof of Theorem \ref{ConvergEmpiricalMeasure}] 
Let's fix $z_*\in E$. To shorten notation, let's call $i=i(N):= \lfloor z_*N\rfloor$.

 Take any function $g$ Lipschitz on $\T$ with Lipschitz constant equal to one. From the definitions of $\Pi_iF_*\mu^{(N)}$ and $\mc F\nu$ we obtain:
\begin{align}
\int_\T g(y)\left(d (\mc F\nu)_{z_*}(y)-d\Pi_iF_*\mu^{(N)}(y)\right)&=\int_\T \, g(F_{\nu,z_*}(y))d\nu_{z_*}(y)-\label{Eq:Int1}\\
 &\quad\quad-\int_{\T^N} g\left(f(x_i)+\frac{\alpha}{N}\sum_{j=1}^NA_{ij}h(x_i,x_j)\right)d\mu^{(N)}(x_1,...,x_N)\label{Eq:Int2}\\
 &=\mathcal I_1-\mathcal I_2.\nonumber
\end{align}
To give estimates for the integral $\mathcal I_2$ in \eqref{Eq:Int2}, we are going to use the fact that $\mu^{(N)}$ satisfies a concentration inequality and the fact that the average function $\psi(x_1,...,x_N):=\frac{1}{N}\sum_{j=1}^NA_{ij}h(x,x_j)$, for any fixed $x\in \T$, is Lipschitz of order $\|h\|_{\mc C^1}N^{-1}$ in its entries and therefore close to its expectation w.r.t. $\mu^{(N)}$; informally,
\begin{align*}
g\left(f(x_i)+\frac{\alpha}{N}\sum_{j=1}^NA_{ij}h(x_i,x_j)\right) &\approx g\left(f(x_i)+\int\frac{\alpha}{N}\sum_{j=1}^NA_{ij}h(x_i,x_j')d\mu^{(N)}(x_1',...,x_N')\right)\\
&= g(F_{\nu,z_*}(x_i))+O(N^{-1/3})
\end{align*}
uniformly in $N$ and this will let us conclude that the difference between $\mc I_1$ and $\mc I_2$ is $O(N^{-\frac13})$; rigorously proving this requires some steps.

\emph{Step 1}:  First notice that by the concentration inequality \eqref{Eq:H1} satisfied by $\mu^{(N)}$, for every $x\in \T$, one has
\begin{align*}
\mu^{(N)}\left(\left|\frac{1}{N}\sum_{j=1}^NA_{ij}h(x,x_j)-\int_{\T^N} \frac1N\sum_{j=1}^NA_{ij}h(x,x_j')d\mu^{(N)}(x_j)\right|>{N^{-1/3}} \right)&\le C_1\exp\left[-C_2N^{1/3}\right].
\end{align*}

\emph{Step 2}:  Fix $y_1,...,y_{M}$ an $\epsilon>0$ net, meaning that for every $x\in \T$ there is $k$ such that $|x-y_k|<\epsilon$ \footnote{ Here $M$ depends on $\epsilon$.}. Then define
\[
\Omega_k:=\left\{(x_1,...,x_N)\in \T^N:\, \left|\frac{1}{N}\sum_{j=1}^NA_{ij}h(y_k,x_j)-\mb E_{\mu^{(N)}}\left[  \frac1N\sum_{j=1}^NA_{ij}h(y_k,x_j)\right]\right|>N^{-1/3}\right\}.
\]
Let $\Omega:=\cup_{k=1}^M\Omega_k$, notice that $\mu^{(N)}(\Omega)\le MC_1\exp\left[-C_2N^{1/3}\right]$. Then, choosing a net with $\epsilon=N^{-1/3}$, for any $x\in \T$ and $(x_1,...,x_N)\in \Omega^c$, an application of triangle inequality gives:
\[
 \left|\frac{1}{N}\sum_{j=1}^NA_{ij}h(x,x_j)-\mb E_{\mu^{(N)}}\left[  \frac1N\sum_{j=1}^NA_{ij}h(y_k,x_j)\right]\right|<O(N^{-1/3}).
 \]

\emph{Step 3}: $\mathcal I_2$ can be approximated as
\begin{align*}
&\int_\T d\mu_i^{(N)}(x_i)\int_{\T^{N}} g\left(f(x_i)+\frac{\alpha}{N}\sum_{j=1}^NA_{ij}h(x_i,x_j')\right) \prod_{j=1}^Nd\mu^{(N)}(x_1',...,x_N')+O(N^{-1}) =\\
&\quad\quad =\int_\T d\mu_i^{(N)}(x_i)\left(\int_{\Omega} +\int_{\Omega^c}\right)g\left(f(x_i)+\frac{\alpha}{N}\sum_{j=1}^NA_{ij}h(x_i,x_j')\right)d\mu^{(N)}(x_1',...,x_N')+O(N^{-1}) \\
&\quad\quad\le C_\# N^{1/3}\exp\left[-cN^{1/3}\right]+\int_\T d\mu_i^{(N)}(x_i)\int_{\Omega^c}g\left(f(x_i)+\frac{\alpha}{N}\sum_{j=1}^NA_{ij}h(x_i,x_j')\right) d\mu^{(N)}(x_1',...,x_N')+O(N^{-1})\\
&\quad\quad\le \int_\T d\mu_i^{(N)}(x_i)\int_{\Omega^c}g\left(f(x_i)+\frac{\alpha}{N}\sum_{j=1}^NA_{ij}h(x_i,x_j')\right)\mu^{(N)}(x_1',...,x_N')+O(N^{-1})\\
&\quad\quad\le \int_\T d\mu_i^{(N)}(x_i) \mathbb P(\Omega^c) g\left(f(x_i)+\mb E_{\mu^{(N)}}\left[\frac{\alpha}{N}\sum_{j=1}^NA_{ij}h(x_i,x_j')\right]\right)+ O(N^{-1/3})\\
&\quad\quad\le  \int_\T d\mu_i^{(N)}(x_i)   g\left(f(x_i)+\frac{\alpha}{N}\sum_{j=1}^N\int_{\T}A_{ij}h(x_i,x_j')d\mu_j^{(N)}(x_j')\right)+ O(N^{-1/3})
\end{align*}
where we used the estimate  $\mathbb P(\Omega^c)\ge 1-C_1M\exp[-C_2N^{1/3}]$. Recalling the definition of $\mu_j^{(N)}$ and of  $\W$ we have that 
\begin{align*}
\frac{1}{N}\sum_{j=1}^N\int_\T A_{ij}h(x_i,x_j')d\mu_j^{(N)}(x_j') &= \frac{1}{N}\sum_{j=1}^N\int_\T A_{ij}h(x_i,x_j')N\int_{j/N}^{(j+1)/N}dz' d\nu_{z'}(x_j')\\
&= \sum_{j=1}^N\int_\T \int_{j/N}^{(j+1)/N}dz' \W(z,z')h(x_i,x_j') d\nu_{z'}(x_j')
\end{align*}
for all $z\in((i-1)/N,i/N]$, and in particular for $z_*$, 
\begin{equation}\label{Eq:UpToEqual}
f(x_i)+\frac{\alpha}{N}\sum_{j=1}^N\int_{\T}A_{ij}h(x_i,x_j')d\mu_j^{(N)}(x_j')=f(x_i)+\alpha\int_\T\int_{0}^{1} dz' \, \W(z_*,z') h(x_i,x_j')d\nu_{z'}(x_j').
\end{equation}
Now using \eqref{Eq:CH}, for any $\delta>0$, there is $N_\delta$ such that for $N\ge N_\delta$ and all $x_i\in \T$,
\begin{align*}
f(x_i)+\alpha\int_\T\int_{0}^{1} dz' \, \W(z_*,z') h(x_i,x_j')d\nu_{z'}(x_j')&=f(x_i)+\alpha\int_\T\int_{0}^{1} dz' \, W(z_*,z') h(x_i,x_j')d\nu_{z'}(x_j')+\delta\\
&=F_{\nu,z_*}(x_i)+\delta.
\end{align*}

Putting together all the estimates obtained so far:
\begin{align*}
\int_{\T^N} g\left(f(x_i)+\frac{\alpha}{N}\sum_{j=1}^NA_{ij}h(x_i,x_j)\right)d\mu^{(N)}(x_1,...,x_N) &= \int_\T d\mu_i^{(N)}(x_i)g(F_{\nu,z_*}(x_i))+O(\delta)+O(N^{-1/3})\\
&=\int_\T d\nu_{z_*}(x_i)g(F_{\nu,z_*}(x_i))+O(\delta)+O(N^{-1/3})
\end{align*}
where in the last line we used that $d_W(\mu_i^{(N)},\nu_{z_*})=O(N^{-1})$ and $g\circ F_{\nu,i/N}$ is Lipschitz. This allows us to conclude that
\[
\mathcal I_1-\mathcal I_2=O(\delta)+O(N^{-1/3})
\]
and the claim follows because $\delta$ is arbitrary.
\end{proof}

\subsection{Proof of Existence of the fixed point: Theorem \ref{main}}

The proof of the theorem can be obtained following the steps below:
\begin{description}
\item[Existence of an invariant set of densities for an iterate of $\mc F$.]  $\mc A_{\bold M}$ under the action the STO.  Crucially, one can show that for sufficiently low coupling the strength, an iterate of the STO leaves invariant a set of measures in $\mc A_{\bo M}$ that have disintegrations $\{\nu_z\}_{z\in[0,1]}$ with bounded oscillation with respect to $z$. 
We obtain this result as a consequence of three auxiliary results. In Proposition \ref{kthDerivative}, we control the dependence of the maps $F_{\phi,z}$  on  $z\in[0,1]$; in Proposition \ref{Lem:UnifProponFiber}, we show that there are sets of measures invariant under the STO for which the regularity of $\frac{d\nu_z}{d\Leb_{\T}}$ is controlled; in Proposition \ref{DifDynSameDensityDERIVADA}, we bound the dependence of the action of the STO on different fibers in terms of the graphon. Finally, we prove in Lemma \ref{F-invariante} that there is an invariant set of densities where the action of the STO has controlled distortion.

\item[Convexity and pre-compactness and of the invariant set.]
The control in $z$ of the disintegration of the measures in $\mc A_{\bold M}$ allows us to conclude in Lemma \ref{RelativamenteCompacto} that $\mc A_{\bold M}$ is relatively compact in $\mathcal{B}_{w}$. We  show that $\mc A_{\bold M}$ is convex in Lemma \ref{espacioConvexo}.

\item[The STO is Lipschitz in the weak norm.] We prove that the STO restricted to the strong space $\mathcal{B}_{s,M}\cap \mc M_{1,\Leb}\supset \mc A_{\bo M}$ is Lipschitz continuous in the weak norm $\Vert \cdot \Vert_{``1"}$  (Lemma \ref{cont-STO}). 

\item[Existence of a fixed density for an iterate of $\mc F$.]  Combining the above steps, we apply Schauder's fixed point theorem to deduce the existence of a fixed point for a sufficiently large iterate of $\mc F$ (i.e. a periodic orbit for $\mc F$).

\item[Existence, stability, and uniqueness of a fixed point for $\mc F$] We introduce a projective distance with respect to which $\mc F$ is a contraction. We use this to show that the fixed point for an iterate of $\mc F$ is actually a fixed point for $\mc F$, and that it is locally attracting.

\end{description}

\subsubsection{Existence of an invariant set of densities $\mc A_{\bo M}$ for an iterate of $\mc F$}

The coupling strength $\alpha$ plays an important role in our results, for instance, when the coupling is small, the property that the local dynamics $f$ is uniformly expanding carries over to the map $F_{\varphi,z}$. This can be observed  in the following:

\begin{lemma}[Uniform Expansion and Distortion Bounds for Fiber Maps]\label{F-Expanding}
Assume  $f$ and  $h$ are $\mc C^3$, and $W\in L^{\infty}([0,1],L^1([0,1]))$. Consider $\nu\in \mc M_{1,\Leb}$ and $F_{\nu,z}$ the restriction of the dynamics to the fiber $\{z\}\times\mathbb{T}$ as defined in \eqref{Eq:SetDynFiber}.
If 
\begin{equation}\label{Eq:ConUnExp}
|\alpha|\cdot \| W\|_{L^{\infty}([0,1],L^1([0,1],\R))}<\hat{\alpha}:= \frac{\inf_{x\in \T} |f^{\prime}(x)|  - 1}{\Vert h\Vert_{\mc C^1}},
\end{equation}
 then for any $\nu\in \mc M_{1,\Leb}$ and a.e. $z\in[0,1]$, the map $F_{\nu, z}$ is  uniformly expanding. Moreover, there are $K,K'>0$ such that for any $\nu\in \mc M_{1,\Leb}$ and for a.e. $z\in[0,1]$ 
\[
\|F_{\nu,z}\|_{\mc C^3}<K, \mbox{~and~} 
\sup_{x\in\T}\left|\frac{F_{\nu,z}''(x)}{(F_{\nu,z}'(x))^2}\right|<K'.
\]

\end{lemma}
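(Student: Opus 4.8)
The plan is to compute directly with the explicit formula \eqref{Eq:SetDynFiber}, differentiating under the integral sign. First I would write
\[
F_{\nu,z}(x) = f(x) + \alpha\, G_{\nu,z}(x), \quad\text{where}\quad G_{\nu,z}(x) := \int_0^1 dz' \int_{\T} d\nu_{z'}(y)\, W(z,z')\, h(x,y),
\]
and observe that, since $h\in\mc C^3(\T\times\T,\R)$, for $\Leb$-a.e.\ $z$ (namely those $z$ for which $W(z,\cdot)\in L^1$) we may differentiate in $x$ up to three times under the double integral, by dominated convergence, using that $\nu_{z'}$ is a probability measure and $\|W(z,\cdot)\|_{L^1}$ is essentially bounded. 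This gives the pointwise bound
\[
\sup_{x\in\T}\bigl|\partial_x^k G_{\nu,z}(x)\bigr| \le \|h\|_{\mc C^3}\, \|W(z,\cdot)\|_{L^1} \le \|h\|_{\mc C^3}\, \|W\|_{L^\infty([0,1],L^1([0,1]))} \qquad k=0,1,2,3,
\]
uniformly in $\nu\in\mc M_{1,\Leb}$.

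Next, for the uniform expansion: $F_{\nu,z}'(x) = f'(x) + \alpha\, G_{\nu,z}'(x)$, so
\[
|F_{\nu,z}'(x)| \ge \inf_{\T}|f'| - |\alpha|\,\|h\|_{\mc C^1}\,\|W\|_{L^\infty([0,1],L^1)} .
\]
Under the standing hypothesis \eqref{Eq:ConUnExp}, the right-hand side is strictly greater than $1$, so $F_{\nu,z}$ is uniformly expanding for a.e.\ $z$ (with a uniform expansion constant, independent of $\nu$ and $z$). The $\mc C^3$ bound follows immediately: $\|F_{\nu,z}\|_{\mc C^3} \le \|f\|_{\mc C^3} + |\alpha|\,\|h\|_{\mc C^3}\,\|W\|_{L^\infty([0,1],L^1)} =: K$, again uniform in $\nu$ and a.e.\ $z$.

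Finally, for the distortion bound, I note that the denominator is bounded below away from zero: $|F_{\nu,z}'(x)|^2 \ge (\inf_{\T}|f'| - |\alpha|\,\|h\|_{\mc C^1}\,\|W\|_{L^\infty([0,1],L^1)})^2 =: c^2 > 1$ by the expansion estimate, while the numerator is bounded above: $|F_{\nu,z}''(x)| \le \|f''\|_{\mc C^0} + |\alpha|\,\|h\|_{\mc C^2}\,\|W\|_{L^\infty([0,1],L^1)}$. Hence
\[
\sup_{x\in\T}\left|\frac{F_{\nu,z}''(x)}{(F_{\nu,z}'(x))^2}\right| \le \frac{\|f''\|_{\mc C^0} + |\alpha|\,\|h\|_{\mc C^2}\,\|W\|_{L^\infty([0,1],L^1)}}{c^2} =: K',
\]
again uniform in $\nu$ and a.e.\ $z$. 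There is no serious obstacle here: the only point requiring a little care is the justification of differentiation under the integral sign (ensuring the a.e.-in-$z$ qualifier is correctly propagated from the hypothesis $W\in L^\infty([0,1],L^1([0,1]))$, which only controls $W(z,\cdot)$ for a.e.\ $z$), and the observation that all constants can be taken uniform in $\nu\in\mc M_{1,\Leb}$ because $\nu_{z'}$ enters only as a probability measure against which the $\mc C^3$-bounded function $h(x,\cdot)$ is integrated.
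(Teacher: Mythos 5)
Your proof is correct and takes essentially the same approach as the paper: you write down the explicit formula for $F_{\nu,z}^{(i)}$ by differentiating under the integral, then use the probability-measure normalisation of $\nu_{z'}$ together with H\"older in $z'$ to get $|\alpha \partial_x^k G_{\nu,z}(x)| \le |\alpha|\,\|h\|_{\mc C^k}\,\|W\|_{L^\infty([0,1],L^1)}$, from which uniform expansion under \eqref{Eq:ConUnExp}, the $\mc C^3$ bound $K$, and the distortion bound $K'$ all follow by the same elementary estimates the paper uses. You are a bit more explicit than the paper in justifying the interchange of differentiation and integration (the paper just writes the formula), but that is a minor stylistic difference, not a different argument.
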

\begin{proof}
Note that for every $z\in[0,1]$ for which $W(z,\cdot)\in L^1([0,1],\R)$, the $i$-th derivative of $F_{\nu,z}$ is given by,
\begin{equation*}
F_{\nu, z}^{(i)}(x) = f^{(i)}(x) + \alpha \int_{0}^{1} dz' \int_{\mathbb{T}} \, \,  \, W(z, z') \, \partial_{1}^{(i)} h(x, y) \, d\nu_{z'}(y)
\end{equation*}
leading to, for a.e. $z\in[0,1]$,
\begin{align*}
\vert F'_{\nu, z}(x) \vert & \geq  \vert f'(x) \vert - |\alpha|\left| \int_{0}^{1} dz' \left( \int_{\mathbb{T}} \, \partial_{1} h(x, y) \, d\nu_{z'}(y) \right) \, \, W(z, z') \right|\\
	& \geq \vert f'(x) \vert - |\alpha| \Vert \partial_{1} h(x, \cdot)\Vert_{\infty} \, \Vert W(z, \cdot)\Vert_{L^1}\\
	&\geq \inf_{x\in\mathbb{T}} \vert f'(x)\vert - \alpha \, \|h\|_{\mc C^1}  \| W\|_{L^{\infty}([0,1],L^1([0,1],\R))}
\end{align*}
where we used that $\nu_{z'}$ is a probability measure and H\"older inequality; and, analogously, 
\[
|F_{\nu, z}^{(i)}(x)| \le \sup_{x\in \mb T}|f^{(i)}(x)|+|\alpha|\Vert h\Vert_{\mc C^{i}}  \| W\|_{L^{\infty}([0,1],L^1([0,1],\R))}:=B_{i}.
\]

To ensure that $\min_{x\in\mathbb{T}}\vert F'_{\nu, z}(x)\vert > 1 $, we require
\begin{equation*}
 \inf_{x\in\mathbb{T}}\vert f'(x)\vert - \alpha \, \Vert h\Vert_{\mc C^1}  \| W\|_{L^{\infty}([0,1],L^1([0,1],\R))}> 1
\end{equation*}
and  since $\vert f'(x)\vert \geq \sigma > 1$ the first claim follows.

To estimate the $\mc C^3$ norm of the local dynamics, note that for a.e. $z\in [0,1]$ and $\nu\in \mc M_{1,\Leb}$ we have
\begin{align*}
\Vert F_{\nu, z}\Vert_{\mc C^3} & := \sup_{x\in \mb T}|F_{\nu, z}(x)|+\sup_{x\in \mb T}|F'_{\nu, z}(x)|+\sup_{x\in \mb T}|F''_{\nu, z}(x)|+\sup_{x\in \mb T}|F'''_{\nu, z}(x)|\\
& \le \Vert f \Vert_{\mc{C}^3}+ |\alpha|\Vert h \Vert_{\mc{C}^3}\| W\|_{L^{\infty}([0,1],L^1([0,1],\R))}:=K.
\end{align*}
so, the fiber maps $F_{\nu,z}$ belong to $\mc C^3(\mathbb{T})$ for any $\nu \in \mathcal{M}_{1,\mathrm{Leb}}$ and for a.e. $z \in [0,1]$. To estimate the distortion:
\[
\sup_{x\in \mb T}\left| \frac{F''_{\nu, z}(x)}{(F'_{\nu, z}(x))^{2}} \right| \le \frac{\sup_{x\in \mb T}|F''_{\nu, z}(x)|}{\inf_{x\in \mb T}|(F'_{\nu, z}(x))^{2}|} :=K'<\infty
\]
where $K'$ is independent of $\nu$ and $z$.
\end{proof}

\begin{definition}\label{Def:xi}
From now on, for $|\alpha|$ under the assumptions of the above lemma, we will denote by $\xi=\xi(\alpha)>1$ the minimal expansion rate of $F_{\nu,z}$ that is, $\vert F'_{\nu,z}(x)\vert \geq \xi(\alpha)>1$ for every $\nu\in \mc M_{1,\Leb}$ and a.e. $z\in[0,1]$.
\end{definition}

The above lemma can be used to show that there are invariant sets of measures under the STO for which the conditional measures on fibers $\{z\}\times\T$ have a smooth density: Recall the definition of $\tilde {\mc B}_{BV^i,M_i}$ given in \eqref{Set:RegOnFib}, the following holds
\begin{proposition}\label{Lem:UnifProponFiber}
Under the assumptions of Lemma \ref{F-Expanding} and condition \eqref{Eq:ConUnExp}, there are $M_1,\,M_2>0$  such that $\mc F(\tilde {\mc B}_{BV^1,M_1}\cap \tilde{\mc B}_{BV^2,M_2}\cap \mc M_{1,\Leb})\subset\tilde {\mc B}_{BV^1,M_1}\cap \tilde{\mc B}_{BV^2,M_2}\cap \mc M_{1,\Leb}$. 
\end{proposition}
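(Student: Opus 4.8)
The plan is to reduce the statement to a uniform estimate for the transfer operators of the fiber maps, and then to establish two Lasota--Yorke-type inequalities, one controlling the $BV^1$-seminorm and one controlling the $BV^2$-seminorm.

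First I would invoke the identity $(\mc F\nu)_z=(F_{\nu,z})_*\nu_z$ from the Remark following the definition of the STO. By Lemma~\ref{F-Expanding} together with \eqref{Eq:ConUnExp}, for a.e.\ $z\in[0,1]$ the map $T:=F_{\nu,z}$ is a $\mc C^3$ covering map of $\T$ with $|T'|\ge\xi=\xi(\alpha)>1$, $\|T\|_{\mc C^3}\le K$ and $\sup_\T|T''/(T')^2|\le K'$, all bounds uniform in $\nu$ and in $z$. Being nonsingular, $T$ pushes an absolutely continuous $\nu_z$ with density $\psi=\psi_z$ to the absolutely continuous measure with density $\mathcal{L}_z\psi$, where $\mathcal{L}_z$ is the transfer operator of $T$, characterised by $\int_\T\phi\,(\mathcal{L}_z\psi)\,dx=\int_\T(\phi\circ T)\,\psi\,dx$ for $\phi\in\mc C^0(\T)$; in particular $\mathcal{L}_z$ maps probability densities to probability densities, so $\|\mathcal{L}_z\psi\|_{L^1}=\|\psi\|_{L^1}=1$. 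It therefore suffices to produce $M_1,M_2>0$, independent of $z$ and $\nu$, such that $\|\psi\|_{BV^1}\le M_1$, $\|\psi\|_{BV^2}\le M_2$ and $\|\psi\|_{L^1}=1$ force $\|\mathcal{L}_z\psi\|_{BV^i}\le M_i$ for $i=1,2$. Intersecting over the full-measure set of ``good'' $z$ and using $\mc F\nu\in\mc M_{1,\Leb}$, together with the measurability of $z\mapsto(\mc F\nu)_z$ (again from the Remark), then gives the Proposition.

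The two inequalities, for $T=F_{\nu,z}$ and a test function $g$ with $\|g\|_\infty\le1$, take the form
\[
|\mathcal{L}_z\psi|_{BV^1}\le\xi^{-1}|\psi|_{BV^1}+K'\|\psi\|_{L^1},\qquad
|\mathcal{L}_z\psi|_{BV^2}\le\xi^{-2}|\psi|_{BV^2}+C_1|\psi|_{BV^1}+C_2\|\psi\|_{L^1},
\]
with $C_1,C_2$ depending only on $K,K',\xi$. Both are obtained by rewriting $\int_\T(g^{(i)}\circ T)\,\psi\,dy$, read through the dual definitions of the seminorms so that no genuine derivative of $\psi$ is ever formed. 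For the first, with $\tilde g:=g\circ T$ one uses $g'\circ T=\tilde g'/T'=\big(\tilde g/T'\big)'+\tilde g\,T''/(T')^2$: since $\tilde g/T'\in\mc C^1$ with $\|\tilde g/T'\|_\infty\le\xi^{-1}$, that term contributes at most $\xi^{-1}|\psi|_{BV^1}$, while the remainder is at most $K'\|\psi\|_{L^1}$ by the distortion bound. For the second one checks the identity $g''\circ T=\big(\tilde g/(T')^2\big)''+\big(3\tilde g\,T''/(T')^3\big)'-\tilde g\,\big(T''/(T')^3\big)'$; the first bracket is $\mc C^2$ with sup-norm $\le\xi^{-2}$ (contributing $\le\xi^{-2}|\psi|_{BV^2}$), the second is $\mc C^1$ with sup-norm $\le 3K\xi^{-3}=:C_1$ (contributing $\le C_1|\psi|_{BV^1}$), and the last has sup-norm $\le C_2$ for a constant $C_2=C_2(K,\xi)$ (contributing $\le C_2\|\psi\|_{L^1}$).

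To finish, since $\|\mathcal{L}_z\psi\|_{L^1}=1$ the first inequality yields $\|\mathcal{L}_z\psi\|_{BV^1}\le\xi^{-1}\|\psi\|_{BV^1}+K'+1$, so any $M_1\ge(K'+1)/(1-\xi^{-1})$ makes the $BV^1$-ball of radius $M_1$ invariant under every $\mathcal{L}_z$; restricting to that ball, the second inequality gives $\|\mathcal{L}_z\psi\|_{BV^2}\le\xi^{-2}\|\psi\|_{BV^2}+C_1M_1+C_2+1$, so any $M_2\ge(C_1M_1+C_2+1)/(1-\xi^{-2})$ makes $\mc B_{BV^1,M_1}\cap\mc B_{BV^2,M_2}$ invariant. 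Combined with $\mc F(\mc M_{1,\Leb})\subset\mc M_{1,\Leb}$ this is exactly the assertion. I expect the only real obstacle to be bookkeeping: verifying the second-order identity cleanly and, above all, making every constant manifestly independent of (a.e.)~$z$ and of $\nu$ --- but that independence is precisely the content of Lemma~\ref{F-Expanding}, so no new idea is required; one should, however, confirm at the outset that for $|\alpha|$ small $F_{\nu,z}$ is a genuine $\mc C^3$ covering of $\T$ (immediate from uniform expansion and periodicity of $h$ in its first argument), legitimising the transfer operator and the duality relation used above.
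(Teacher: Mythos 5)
Your proposal is correct and follows essentially the same route as the paper: the paper also reduces to the fiberwise identity $(\mc F\nu)_z=(F_{\nu,z})_*\nu_z$, invokes Lemma~\ref{F-Expanding} for uniform expansion/$\mc C^3$/distortion bounds, and then applies the Lasota--Yorke inequalities for $|\cdot|_{BV^1}$ and $|\cdot|_{BV^2}$ (the paper factors these out as Lemma~\ref{Lem:BoundUnifExpMaps}, which you reproduce inline, using the same dual-seminorm identities for $g'\circ T$ and $g''\circ T$). Your second-order identity is verified correctly — in fact more carefully than the paper's displayed computation, which contains an apparent typo ($F''$ where $(F'')^2$ should appear) that does not affect the conclusion.
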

\begin{proof}
 It has  been established that $\mc F(\mc M_{1,\Leb})\subset \mc M_{1,\Leb}$. 
To conclude the proof we need to show that for sufficiently large $M_1,\,M_2>0$, $\nu_z\in \mc B_{BV^1,M_1}\cap\mc B_{BV^2,M_2}$ for a.e. $z$ implies that $(F_{\nu,z})_*\nu_z\in\mc B_{BV^1,M_1}\cap\mc B_{BV^2,M_2}$ for a.e. $z$. From Lemma \ref{F-Expanding}  the maps $F_{\nu,z}$ have a common lower bound (uniform in $\nu$ and for a.e. $z\in[0,1]$) on their minimal expansion and a common upper bound on the $\mc C^3(\mathbb{T})$ norm and their distortion. Applying Lemma \ref{Lem:BoundUnifExpMaps}, the result follows.
\end{proof}

We now study the regularity of $F_{\nu,z}$ with $z \in [0,1]$. Assume that $W \in L^{\infty}([0,1], L^1([0,1],\R))$. By definition, this means that $W: [0,1]^2 \to \mathbb{R}$ is measurable and that for almost every $z \in [0,1]$, the section $W(z, \cdot)$ belongs to $L^1([0,1])$, with essentially bounded $L^1$-norm. 
\begin{definition}Call
\[
A_{W} := \left\{ z \in [0,1] : W(z,\cdot) \in L^1([0,1]) \right\}
\]
the set of full measure for which the section $W(z,\cdot)$ is well-defined in $ L^1([0,1])$.
\end{definition}

\begin{proposition}\label{kthDerivative}
For $k\ge 0$, let $f\in\mathcal{C}^{k}(\mathbb{T},\mathbb{T})$, $h\in\mathcal{C}^{k}(\mathbb{T}\times\mathbb{T},\mathbb{R})$, $\nu\in \mc M_{1,\Leb}$ and $W\in L^{\infty}([0,1],L^1([0,1],\R))$ then, for $z,\bar{z} \in A_{W}$ we have that
    $$d_{\mc C^k} (F_{\nu,z},F_{\nu,\bar{z}}):=\sum_{i=0}^{k}\sup_{x\in\T}   \left\vert F^{(i)}_{\nu,z}(x)-F^{(i)}_{\nu,\bar{z}}(x) \right\vert   \leq \alpha \, \Vert h\Vert_{\mc C^k} \Vert W(z,\cdot)-W(\bar{z},\cdot)\Vert_{L^1}.$$    
\end{proposition}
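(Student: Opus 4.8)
The plan is to exploit the explicit integral representation of the derivatives of the fiber maps that already appears in the proof of Lemma~\ref{F-Expanding}, to notice that the part of $F_{\nu,z}$ coming from the local dynamics $f$ does not depend on $z$ and therefore cancels in the difference, and then to bound the remaining $z'$-integral by pulling out an $L^\infty$ bound on the $h$-integrals (using that each $\nu_{z'}$ is a probability measure) and applying Hölder's inequality in the $z'$ variable.

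Concretely, I would first record that for $z\in A_W$ and $0\le i\le k$, differentiation under the integral sign — legitimate because $h\in\mathcal C^{k}$, each $\nu_{z'}$ is a probability measure, and $W(z,\cdot)\in L^1([0,1])$ so the integrand is dominated uniformly in $x$ — gives
\[
F^{(i)}_{\nu,z}(x)=f^{(i)}(x)+\alpha\int_0^1 dz'\int_{\mathbb T}W(z,z')\,\partial_1^{(i)}h(x,y)\,d\nu_{z'}(y),
\]
exactly as in Lemma~\ref{F-Expanding}. Subtracting the corresponding identity for $\bar z\in A_W$, the term $f^{(i)}(x)$ disappears and one is left with
\[
F^{(i)}_{\nu,z}(x)-F^{(i)}_{\nu,\bar z}(x)=\alpha\int_0^1 dz'\,\bigl(W(z,z')-W(\bar z,z')\bigr)\int_{\mathbb T}\partial_1^{(i)}h(x,y)\,d\nu_{z'}(y).
\]
Since $\nu_{z'}$ is a probability measure, $\bigl|\int_{\mathbb T}\partial_1^{(i)}h(x,y)\,d\nu_{z'}(y)\bigr|\le\|\partial_1^{(i)}h\|_\infty$ for all $z'$ and $x$, so Hölder's inequality on $[0,1]$ yields
\[
\sup_{x\in\mathbb T}\bigl|F^{(i)}_{\nu,z}(x)-F^{(i)}_{\nu,\bar z}(x)\bigr|\le|\alpha|\,\|\partial_1^{(i)}h\|_\infty\,\|W(z,\cdot)-W(\bar z,\cdot)\|_{L^1}.
\]
Summing over $i=0,\dots,k$ and using $\sum_{i=0}^k\|\partial_1^{(i)}h\|_\infty\le\|h\|_{\mathcal C^k}$ (recall the $\mathcal C^k$-norm here is the sum of the sup-norms of all partial derivatives up to order $k$, as in the definition of $\|F_{\nu,z}\|_{\mathcal C^3}$ used in Lemma~\ref{F-Expanding}) gives precisely the claimed inequality $d_{\mathcal C^k}(F_{\nu,z},F_{\nu,\bar z})\le|\alpha|\,\|h\|_{\mathcal C^k}\,\|W(z,\cdot)-W(\bar z,\cdot)\|_{L^1}$.

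I do not anticipate a real obstacle: the only step demanding any care is the interchange of differentiation and integration that produces the formula for $F^{(i)}_{\nu,z}$, but this is the same computation already performed in the proof of Lemma~\ref{F-Expanding} and is routine given the $\mathcal C^k$ regularity of $h$ together with $W(z,\cdot)\in L^1([0,1])$ and $\nu_{z'}$ a probability measure; the other point worth keeping in mind is simply matching the $\mathcal C^k$-norm convention so that the constant in the statement comes out as written.
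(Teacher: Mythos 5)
Your proof is correct and takes essentially the same route as the paper's: both use the integral representation of $F^{(i)}_{\nu,z}$ to cancel the $f^{(i)}$ term, bound the inner $h$-integral in sup norm using that $\nu_{z'}$ is a probability measure, and apply H\"older in $z'$ to produce the $L^1$-norm of the graphon difference. The only differences are cosmetic: you spell out the interchange of differentiation and integration and the final summation over $i$, which the paper leaves implicit, and you correctly write $|\alpha|$ where the paper informally writes $\alpha$.
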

\begin{proof}
   Consider the $k$-th derivative of the dynamics in the fiber $\{z\}\times\mathbb{T}$, we estimate
\begin{align*}
   \left\vert F^{(i)}_{\nu,z}(x)-F^{(i)}_{\nu,\bar{z}}(x) \right\vert & = \left\vert \alpha \int_{0}^{1}dz'\int_{\mathbb{T}} \, (W(z,z')-W(\bar{z},z')) \, \partial_{1}^{(i)} h(x,y) \, d\nu_{z'}(y) \right\vert  \nonumber \\
         & \leq \alpha \, \Vert W(z,\cdot)-W(\bar{z},\cdot)\Vert_{L^1} \sup_{z'\in [0,1]} \left|\int_{\mathbb{T}} \partial_{1}^{(i)} h(x,y) d\nu_{z'}(y)\right|\nonumber \\
         & \leq \alpha \, \sup_{x,y\in\T}|\partial_1^{(i)}h(x,y)|\Vert W(z,\cdot)-W(\bar{z},\cdot)\Vert_{L^1}
\end{align*}
and the result follows.
\end{proof}

\begin{proposition}\label{DifDynSameDensityDERIVADA} Under the assumption of Lemma \ref{F-Expanding} and condition \eqref{Eq:ConUnExp}, for any $M_1,\,M_2>0$ there is a constant $K_{\#}:=K_{\#}(\alpha)$ with $K_\#>0$ such that for any $\phi\in \mc M_{1,\Leb}\cap \tilde {\mc B}_{BV^1,M_1}\cap \tilde {\mc B}_{BV^2,M_2}$, 
\begin{equation*}
|(F_{\varphi,z})_{*}\varphi_{\bar{z}} - (F_{\varphi,\bar{z}})_{*}\varphi_{\bar{z}} |_{BV^1} \leq K_\# \Vert W(z,\cdot)-W(\bar{z},\cdot)\Vert_{L^1}
\end{equation*}
for all $\bar z,z\in A_{W} \subset [0,1]$ and $\bar z$ in the full measure set for which $\phi_{\bar z}\in \mc B_{BV^1,M_1}\cap\mc B_{BV^2,M_2}$.
\end{proposition}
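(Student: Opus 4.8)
The plan is to work from the dual description of the seminorm:
\[
\left|(F_{\varphi,z})_{*}\varphi_{\bar z}-(F_{\varphi,\bar z})_{*}\varphi_{\bar z}\right|_{BV^1}
=\sup_{\substack{g\in\mc C^1(\T,\R)\\ \|g\|_\infty\le 1}}\int_\T g'(y)\left[(F_{\varphi,z})_{*}\varphi_{\bar z}-(F_{\varphi,\bar z})_{*}\varphi_{\bar z}\right](y)\,dy ,
\]
fixing such a $g$ and abbreviating $T_1:=F_{\varphi,z}$, $T_2:=F_{\varphi,\bar z}$, $\psi:=\varphi_{\bar z}$. By Lemma~\ref{F-Expanding} together with \eqref{Eq:ConUnExp} and Definition~\ref{Def:xi}, both $T_i$ are $\mc C^3$ circle maps of the same degree with $|T_i'|\ge\xi>1$, $\|T_i\|_{\mc C^3}\le K$ and bounded distortion, uniformly in $\varphi$ and a.e.\ in the fibre coordinate; by Proposition~\ref{kthDerivative}, $d_{\mc C^k}(T_1,T_2)\le \alpha\|h\|_{\mc C^k}\|W(z,\cdot)-W(\bar z,\cdot)\|_{L^1}$ for $k\le 3$. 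Since $\bar z$ lies in the full-measure set on which $\psi\in\mc B_{BV^1,M_1}\cap\mc B_{BV^2,M_2}$, the hypothesis $\psi\in\mc B_{BV^2}$ forces its distributional second derivative to be a finite measure, hence $\psi'\in BV(\T)$ and $\psi\in W^{1,\infty}(\T)$, with $\|\psi\|_\infty$, $\|\psi'\|_\infty$ and $\mathrm{Var}(\psi')$ all bounded by constants depending only on $M_1,M_2$. Using the change-of-variables identity $\int_\T g'(y)\,(T_i)_*\psi(y)\,dy=\int_\T g'(T_i(x))\,\psi(x)\,dx$, the quantity to estimate becomes $\int_\T\left[g'(T_1(x))-g'(T_2(x))\right]\psi(x)\,dx$.

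The key step is to transfer one derivative off $g$. Since $g'(T_i(x))=\frac{1}{T_i'(x)}\frac{d}{dx}\left[g(T_i(x))\right]$, with $\psi/T_i'\in W^{1,\infty}(\T)$ and $g\circ T_i\in\mc C^1(\T)$, integration by parts on $\T$ (no boundary term) gives $\int_\T g'(T_i(x))\psi(x)\,dx=-\int_\T\left(\psi/T_i'\right)'(x)\,g(T_i(x))\,dx$. Subtracting the $i=1$ and $i=2$ expressions and inserting $\pm\int_\T(\psi/T_2')'\,g\circ T_1$ splits the difference as $\mathcal A+\mathcal B$, where
\[
\mathcal A=-\int_\T\left[(\psi/T_1')'-(\psi/T_2')'\right]g\circ T_1,\qquad
\mathcal B=-\int_\T(\psi/T_2')'\left[g\circ T_1-g\circ T_2\right].
\]

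Term $\mathcal A$ is handled directly: writing $(\psi/T_i')'=\psi'/T_i'-\psi\,T_i''/(T_i')^2$, the difference between $i=1$ and $i=2$ is controlled by $d_{\mc C^2}(T_1,T_2)$ via the uniform bounds $|T_i'|\ge\xi$, $\|T_i\|_{\mc C^3}\le K$ and the bounds on $\psi,\psi'$, while $\|g\circ T_1\|_\infty\le1$; hence $|\mathcal A|\le C\,\|W(z,\cdot)-W(\bar z,\cdot)\|_{L^1}$ with $C=C(\alpha,M_1,M_2,f,h)$. For term $\mathcal B$, put $\eta:=(\psi/T_2')'$; here the $BV^2$ control is essential, as it yields $\eta\in BV(\T)$ with $\|\eta\|_{BV}\le C(\alpha,M_1,M_2,f,h)$. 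By duality between push-forward and composition, $\mathcal B=-\int_\T g\,\left[(T_1)_*\eta-(T_2)_*\eta\right]$, so $|\mathcal B|\le\|(T_1)_*\eta-(T_2)_*\eta\|_{L^1}$, and one concludes by the standard comparison estimate $\|(T_1)_*\eta-(T_2)_*\eta\|_{L^1}\le C(\xi,K)\,\|\eta\|_{BV}\,d_{\mc C^1}(T_1,T_2)$ for transfer operators of nearby uniformly expanding maps; this, in turn, follows from the homotopy $T_t:=(1-t)T_2+tT_1$ (still expanding, $|T_t'|\ge\xi$, of the common degree) together with one further integration by parts, the derivative now landing on the merely $BV$ weight $\eta$ as a finite signed measure of total mass $O(\|\eta\|_{BV}\,d_{\mc C^1}(T_1,T_2))$. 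Combining the two bounds and taking the supremum over $g$ gives the claim, with $K_\#=K_\#(\alpha)$ absorbing the dependence on the fixed data $M_1,M_2,f,h$.

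The main obstacle is term $\mathcal B$: because $g$ is controlled only in $\|\cdot\|_\infty$ and not in $\mc C^1$, one cannot bound $g\circ T_1-g\circ T_2$ pointwise by $d_{\mc C^0}(T_1,T_2)$, so a derivative has to be moved \emph{back} onto a weight that is already of low regularity; making this rigorous is exactly what forces the requirement $\varphi_{\bar z}\in\mc B_{BV^2}$ (to keep $(\psi/T_2')'$ in $BV$) and uses the $\mc C^2$-proximity of the fibre maps supplied by Proposition~\ref{kthDerivative}.
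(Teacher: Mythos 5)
Your proposal is correct, and it takes a genuinely different route from the paper's. Both proofs start from the same duality $\sup_{g\in\mc C^1,\,\|g\|_\infty\le1}\int (g'\circ F_z - g'\circ F_{\bar z})\psi$, and both are forced to use the $BV^2$ hypothesis (equivalently $\psi'\in BV$), but the decompositions after that diverge. The paper's proof never moves derivatives off $g$ onto $\psi$; instead it keeps $\psi$ fixed, uses the chain-rule identity $g'\circ F = \bigl(\tfrac{g\circ F}{F'}\bigr)' + \tfrac{g\circ F}{(F')^2}F''$, and then introduces the auxiliary functions $\Phi(x)=(F_z')^{-2}(x)\int_{F_{\bar z}(x)}^{F_z(x)}g$ and $\Psi(x)=F_z''(x)(F_z')^{-3}(x)\int_{F_{\bar z}(x)}^{F_z(x)}g$. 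Since $\Phi$ and $\Psi$ are small in sup norm (of order $\|W(z,\cdot)-W(\bar z,\cdot)\|_{L^1}$, by Proposition~\ref{kthDerivative} applied to $|F_z - F_{\bar z}|$), the terms $\int\Phi''\psi$ and $\int\Psi'\psi$ are controlled directly by $|\psi|_{BV^2}$ and $|\psi|_{BV^1}$ via the definitions of those seminorms, and the residual terms pair bounded functions with $\|\psi\|_{L^1}$. You instead perform a single integration by parts to land on $-\int(\psi/T_i')'\,g\circ T_i$, split as $\mathcal A+\mathcal B$, handle $\mathcal A$ by direct $L^\infty$ estimates on the difference of weights, and reduce $\mathcal B$ to the transfer-operator perturbation estimate $\|(T_1)_*\eta-(T_2)_*\eta\|_{L^1}\lesssim\|\eta\|_{BV}\,d_{\mc C^1}(T_1,T_2)$, which you rederive via the homotopy $T_t$ and a second integration by parts. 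Your approach is modular and makes visible exactly where the $\mc C^2$-closeness of the fibre maps (Proposition~\ref{kthDerivative} with $k=2$) and the $BV$ structure of $\eta=(\psi/T_2')'$ enter; the paper's approach is more self-contained (it does not invoke a separate strong-to-weak perturbation lemma) and avoids discussing products of $BV$ functions with $\mc C^1$ coefficients and the associated measure-theoretic integration by parts. Two small remarks for completeness: in the homotopy argument you should note that the $T_i$, being small $\mc C^0$-perturbations of the same $f$, have a common lift with $T_t$ well defined and $|T_t'|\ge\xi$ by convexity; and in $\mathcal A$ the pointwise formula $(\psi/T_i')'=\psi'/T_i'-\psi T_i''/(T_i')^2$ should be read for the $BV$ representative of $\psi'$, which is indeed bounded by $\lesssim M_2$ because $\psi'$ has zero mean on $\T$.
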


\begin{proof}
Fix $\phi\in \mc M_{1,\Leb}\cap \tilde {\mc B}_{BV^1,M_1}\cap \tilde {\mc B}_{BV^2,M_2}$ and pick any $\psi\in \mc B_{BV^2}$ and $g\in \mc C^1(\T,\R)$ with $\Vert g \Vert_\infty\le 1$. To ease notation, we will drop the $\phi$ dependence from $F_{\phi,z}$ that will be denoted by $F_z$.
\begin{align}
|(F_{z})_*\psi-(F_{\bar z})_*\psi|_{BV^1}&\le \int_\T  g'[(F_{z})_*\psi-(F_{\bar z})_*\psi] \nonumber\\
&= \int_\T (g'\circ F_{z}-g'\circ F_{\bar z})\psi \nonumber\\
&=\int_\T\left[\left(\frac{g\circ F_{z}}{F_{z}'}\right)'-\left(\frac{g\circ F_{\bar z}}{F_{\bar z}'}\right)'\right]\psi+\int_\T\left[\frac{g\circ F_{z}}{(F_{z}')^2}F_z''-\frac{g\circ F_{\bar z}}{(F_{\bar z}')^2}F_{\bar z}''\right]\psi \nonumber\\
&=\int_\T\left(\frac{g\circ F_{z}}{F_{z}'}-\frac{g\circ F_{\bar z}}{F_{\bar z}'}\right)'\psi+\int_\T\left[\frac{g\circ F_{z}}{(F_{z}')^2}F_z''-\frac{g\circ F_{\bar z}}{(F_{\bar z}')^2}F_{\bar z}''\right]\psi.\label{Eq:LipFibBV1}
\end{align}
We start by treating the first integral. Modifying some arguments that can be found in \cite{liverani2019statisticalpropertiesuniformlyhyperbolic}, we can consider $\Phi\in \mc C^2(\mb T,\R)$ defined by
\[
\Phi(x):=(F_{z}')^{-2}(x)\int_{F_{\bar z}(x)}^{F_{z}(x)}ds\,g(s)
\]
with derivative
\[
\Phi'(x)=-2(F_{z}')^{-1}(x)F_{z}''(x)\Phi(x)-F_{\bar z}'(x)(F_{z}'(x))^{-2}g(F_{\bar z}(x))+\frac{g(F_{z}(x))}{F_{z}'(x)},
\]
and notice that
\begin{align*}
\int_\T\left(\frac{g\circ F_{z}}{F_{z}'}-\frac{g\circ F_{\bar z}}{F_{\bar z}'}\right)'\psi&= \int_\T \left[\Phi' + 2(F_{z}')^{-1}F_{z}''\Phi+F_{\bar z}'(F_{z}')^{-2}g\circ F_{\bar z}-\frac{g\circ F_{\bar z}}{F_{\bar z}'}\right]' \psi\\
& =\int_\T \,\Phi'' \psi+\int_\T \left[2(F_{z}')^{-1}F_{z}''\Phi+\left(-1+(F_{\bar z}')^2(F_{z}')^{-2}\right)\frac{g\circ F_{\bar z}}{F'_{\bar z}}\right]' \psi. 
\end{align*}
We now proceed to bound the two integrals above. For the first one, we need to estimate $|\Phi(x)|$.  Using that $\Vert g\Vert _\infty\le 1$, and recalling the definition of $\xi$ from Definition \ref{Def:xi}
\begin{equation*}
    |\Phi(x)|=\left| (F_{z}')^{-2}(x)\int_{F_{\bar z}(x)}^{F_{z}(x)}ds \, g(s) \right| \leq \dfrac{1}{|(F_{z}')^{2}(x)|}\int_{F_{\bar z}(x)}^{F_{z}(x)}ds \, |g(s)| \leq \dfrac{1}{\xi^2}|F_{z}(x)-F_{\bar z}(x)|
\end{equation*}
then, by Proposition \ref{kthDerivative} we have that
\[ 
|\Phi(x)| \leq \alpha\xi^{-2}\Vert h\Vert_{\mc C^0}\Vert W(\bar z, \cdot)-W(z,\cdot)\Vert_{L^1}:=\beta \Vert W(\bar z, \cdot)-W(z,\cdot)\Vert_{L^1}
\]
and this implies, by definition of $|\cdot|_{BV^2}$ norm,
\[
\int_\T \,\Phi'' \psi\le |\psi|_{BV^2}\beta \Vert W(\bar z, \cdot)-W(z,\cdot)\Vert_{L^1}.
\]
For the second one, call $G(x)$ the expression in square bracket:
\[
G(x):=2(F_{z}')^{-1}(x)F_{z}''(x)\Phi(x)+\left(-1+(F_{\bar z}')^2(x)(F_{z}')^{-2}(x)\right)\frac{g\circ F_{\bar z}}{F'_{\bar z}}(x).
\]
To estimate $|G(x)|$ note that
\[
|-1+(F_{\bar z}')^2(F_{z}')^{-2}| \leq |(F_{z}')^{-2}||(F_{ z}')^2-(F_{\bar z}')^2|\leq |(F_{z}')^{-2}| |F_{ z}'-F_{\bar z}'||F_{ z}'+F_{\bar z}'|  
\]
and, once again by Lemma \ref{kthDerivative}, estimate the above by
\[ 
|-1+(F_{\bar z}')^2(F_{z}')^{-2}| \leq 2B_1\alpha\xi^{-2}  \Vert h\Vert_{\mc C^1} \Vert W(\bar z, \cdot)-W(z,\cdot)\Vert_{L^1} = 2B_{1}\beta \Vert W(\bar z, \cdot)-W(z,\cdot)\Vert_{L^1}.
\]
Therefore, from this estimate and the estimate on $|\Phi(x)|$ obtained above,
\begin{align*}
|G(x)| &\le 2\xi^{-1}B_{2}\beta\Vert W(\bar z, \cdot)-W(z,\cdot)\Vert_{L^1}+2\xi^{-1}B_{1}\beta\Vert W(\bar z, \cdot)-W(z,\cdot)\Vert_{L^1}\\
&\le 2\xi^{-1}\beta(B_2+B_1)\|W(\bar z,\cdot)-W(z,\cdot)\|_{L^1}\\
& = \tilde{\beta}\|W(\bar z,\cdot)-W(z,\cdot)\|_{L^1}
\end{align*}
which leads to, by definition of $|\cdot|_{BV^1}$, 
\[
\int_\T G'\psi\le |\psi|_{BV^1}  \tilde{\beta}\|W(\bar z,\cdot)-W(z,\cdot)\|_{L^1}
\]
and recalling that $\psi\in  \mc B_{BV^2,M_2}$, we conclude that there is $K_\#>0$ -- depending on $f$, $h$, $\alpha$, $|\psi|_{BV^1}$, and $|\psi|_{BV^2}$ -- such that
\[
\int_\T\left(\frac{g\circ F_{z}}{F_{z}'}-\frac{g\circ F_{\bar z}}{F_{\bar z}'}\right)'\psi\le K_\# \|W(\bar z,\cdot)-W(z,\cdot)\|_{L^1}.
\]

Analogously, to treat the second integral in \eqref{Eq:LipFibBV1}, consider $\Psi \in \mc C^1(\mb T, \mb R)$ defined by
\[
\Psi(x):=F_z''(x)(F_{z}')^{-3}(x)\int_{F_{\bar z}(x)}^{F_{z}(x)}ds\,g(s)
\]
and compute
\begin{align*}
\Psi'(x)& =-3(F_{z}'(x))^{-1}F_{z}''(x)\Psi(x)+F'''_z(x)(F_{z}'(x))^{-3}\int_{F_{\bar z}(x)}^{F_{z}(x)}ds\,g(s)-F_z''(x)(F_{z}'(x))^{-3}g(F_{\bar z}(x))F'_{\bar z}(x)+\\
& \quad +F_z''(x)\frac{g(F_{z}(x))}{(F_{z}'(x))^2}.
\end{align*}
From these expressions, we get
\begin{align}
&\int_\T\left[\frac{g\circ F_{z}}{(F_{z}')^2}F_z''-\frac{g\circ F_{\bar z}}{(F_{\bar z}')^2}F_{\bar z}''\right]\psi= \nonumber\\
&\quad = \int_\T \left[ \Psi'+3(F_{z}')^{-1}F_{z}''\Psi-F'''_z(F_{z}')^{-3}\int_{F_{\bar z}}^{F_{z}}g +F_z''(F_{z}')^{-3}g\circ F_{\bar z}F'_{\bar z}-\frac{g\circ F_{\bar z}}{(F_{\bar z}')^2}F_{\bar z}'' \right]\psi \nonumber\\
&\quad=\int_\T\Psi'\psi+\int_\T\left[3(F_{z}')^{-1}F_{z}''\Psi-F_z''' (F_{z}')^{-3}\int_{F_{\bar z}}^{F_{z}}g +F_z''F_{\bar z}'(F_{z}')^{-3}g\circ F_{\bar z}- \right.\nonumber\\
&\quad\quad- F_{\bar z}''F_{\bar z}'(F_{z}')^{-3}g\circ F_{\bar z}+ \left. F_{\bar z}''F_{\bar z}'(F_{z}')^{-3}g\circ F_{\bar z}-\frac{g\circ F_{\bar z}}{(F'_{\bar z})^2}F_{\bar z}''\right]\psi \nonumber\\
&\quad=\int_\T\Psi'\psi+\int_\T\left[3(F_{z}')^{-1}F_{z}''\Psi-F_z''' (F_{z}')^{-3}\int_{F_{\bar z}}^{F_{z}}g +[F_z''-F_{\bar z}'']F_{\bar z}'(F_{z}')^{-3}g\circ F_{\bar z}+\right .\nonumber\\
&\quad\quad+\left.\left(-1+(F_{\bar z}')^3(F_{z}')^{-3}\right)\frac{g\circ F_{\bar z}}{(F'_{\bar z})^2}F_{\bar z}''\right]\psi\label{Eq:EstSecInt}
\end{align}

First of all, observe that  by the bounds in Lemma \ref{F-Expanding} and Proposition \ref{kthDerivative}, we have
\begin{align*}
    |\Psi(x)| & \le \left| \frac{F''_{z}(x)}{(F'_{z})^2(x)} \right| \left| \frac{1}{F'_{z}(x)} \right||F_{z}(x)-F_{\bar z}(x)|\\
    & \le D\xi^{-1}\alpha\Vert h\Vert_{\mc C^1} \Vert W( z, \cdot)-W(\bar z,\cdot)\Vert_{L^1}\\
    & =\beta_{1}\Vert W( z, \cdot)-W(\bar z,\cdot)\Vert_{L^1}
\end{align*}
so
\[
\int_\T\Psi'\psi\le \beta_{1}\Vert W( z, \cdot)-W(\bar z,\cdot)\Vert_{L^1} |\psi|_{BV^1}.
\]
To bound the second integral in \eqref{Eq:EstSecInt} call $H(x)$ the expression in square bracket:
\[
H(x):=3(F_{z}')^{-1}F_{z}''\Psi-(F_{z}')^{-3}F'''_z\int_{F_{\bar z}}^{F_{z}}g+[F_z''-F_{\bar z}'']F_{\bar z}'(F_{z}')^{-3}g\circ F_{\bar z}+\left(-1+(F_{\bar z}')^3(F_{z}')^{-3}\right)\frac{g\circ F_{\bar z}}{(F'_{\bar z})^2}F_{\bar z}''.
\]
Notice that
\begin{align*}
    \left|-1+(F_{\bar z}')^3(F_{z}')^{-3}\right| & \le |(F_{z}')^{-3}||(F_{\bar z}')^3-(F_{z}')^3|\\
    & \le |(F_{z}')^{-3}||F_{\bar z}'-F_{z}'||(F_{\bar z}')^2+F_{z}'F_{\bar z}'+(F_{z}')^2|\\
    & \le 3B_{1}^{2}\xi^{-3} \alpha\Vert h\Vert_{\mc C^1} \Vert W( z, \cdot)-W(\bar z,\cdot)\Vert_{L^1}.
\end{align*}
then
\begin{align*}
|H(x)|& \le 3|(F_{z}')^{-1}||F_{z}''||\Psi|+|F_z'''|| (F_{z}')^{-3}|\left| \int_{F_{\bar z}}^{F_{z}}g \right| +|F_z''-F_{\bar z}''||F_{\bar z}'||(F_{z}')^{-3}|g\circ F_{\bar z}|+\\
& \quad + \left|-1+(F_{\bar z}')^3(F_{z}')^{-3}\right|\left|\frac{g\circ F_{\bar z}}{(F'_{\bar z})^2}F_{\bar z}''\right|\\
& \le 3\xi^{-2}B_{2} D\alpha \Vert h\Vert_{\mc C^1} \Vert W( z, \cdot)-W(\bar z,\cdot)\Vert_{L^1}+B_{3}\xi^{-3} \alpha \Vert h\Vert_{\mc C^1} \Vert W( z, \cdot)-W(\bar z,\cdot)\Vert_{L^1}+\\
& \quad +B_{1}\xi^{-3}\alpha \Vert h\Vert_{\mc C^2} \Vert W( z, \cdot)-W(\bar z,\cdot)\Vert_{L^1}+3DB_{1}^{2}\xi^{-3}\alpha\Vert h\Vert_{\mc C^1}\Vert W( z, \cdot)-W(\bar z,\cdot)\Vert_{L^1}\\
& \le (3 B_{2}D\xi+B_{3}+B_{1}+3DB_{1}^{2})\xi^{-3}\alpha\Vert h\Vert_{\mc C^2}\Vert W( z, \cdot)-W(\bar z,\cdot)\Vert_{L^1}\\
& = K_{2}\Vert W( z, \cdot)-W(\bar z,\cdot)\Vert_{L^1}.
\end{align*}
So, 
\[
\int_\T H\psi\le  K_{2}\Vert W( z, \cdot)-W(\bar z,\cdot)\Vert_{L^1}\|\psi\|_{L^1}.
\]
Combining all the estimates we obtain 
\[
\int_\T\left[\frac{g\circ F_{z}}{(F_{z}')^2}F_z''-\frac{g\circ F_{\bar z}}{(F_{\bar z}')^2}F_{\bar z}''\right]\psi  \le  K_\# \|W(z,\cdot)-W(\bar z,\cdot)\|_{L^1},
\]
and
\begin{align*}
|(F_{z})_*\psi-(F_{\bar z})_*\psi|_{BV^1}&  \le K_{\#} \Vert W( z, \cdot)-W(\bar z,\cdot)\Vert_{L^1}.
\end{align*}
Considering $\psi=\phi_{\bar z}$ in the above, the result follows.
\end{proof}

\begin{lemma}[Invariance of admissible set with small distortion]\label{F-invariante}
Assume that $\mathtt{var}_{p,L^1}(W)<\infty$ and that the hypotheses of Lemma \ref{F-Expanding} hold. Consider the set
\[
\mc A_{\bo M}:=\mc B_{s,M}\cap \tilde {\mc B}_{BV^1,M_1}\cap \tilde{\mc B}_{BV^2,M_2}\cap \mc M_{1,\Leb}
\] with $M_1$, $M_2>0$ as in Proposition \ref{Lem:UnifProponFiber}. Then, there is $\alpha_0>0$ sufficiently small,  $M>0$,  and $\bar n\in\N$ such that provided $|\alpha|<\alpha_0$
\[
\mc F^n(\mc A_{\bo M})\subset\mc A_{\bo M}\quad\quad\forall n\ge \bar n.
\]  
\end{lemma}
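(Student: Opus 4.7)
The set $\mc A_{\bo M}$ is an intersection of three conditions whose invariance we check separately. Invariance of $\mc M_{1,\Leb}$ under $\mc F$ is immediate from the definition of the STO, since each $F_{\nu,z}$ preserves the fiber $\{z\}\times\T$. Invariance of the fiberwise BV class $\tilde{\mc B}_{BV^1,M_1}\cap\tilde{\mc B}_{BV^2,M_2}$ (for suitable $M_1, M_2$) is precisely Proposition \ref{Lem:UnifProponFiber}. The core task is therefore invariance of the strong-norm ball $\mc B_{s,M}$. Since $\|\phi_z\|_{W^1}\le\|\phi_z\|_{L^1}=1$ for a.e.\ $z$ when $\phi\in\mc M_{1,\Leb}$, we have $\|\mc F^n\phi\|_{``1"}\le 1$ automatically, so I only need to control $\mathtt{var}_{p,BV^1}(\mc F^n\phi)$ uniformly in $n$.

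The key step is a single-iterate oscillation bound. For $\phi\in\mc A_{\bo M}$ and $z,\bar z$ in the base, the triangle inequality gives
\begin{align*}
|(\mc F\phi)_{\bar z}-(\mc F\phi)_z|_{BV^1} &\le |(F_{\phi,\bar z})_{*}\phi_{\bar z}-(F_{\phi,z})_{*}\phi_{\bar z}|_{BV^1} \\
&\quad +|(F_{\phi,z})_{*}(\phi_{\bar z}-\phi_z)|_{BV^1}.
\end{align*}
Proposition \ref{DifDynSameDensityDERIVADA} bounds the first term by $K_\#\|W(\bar z,\cdot)-W(z,\cdot)\|_{L^1}$, since $\phi_{\bar z}\in\mc B_{BV^1,M_1}\cap\mc B_{BV^2,M_2}$. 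For the second term I would apply the standard Lasota--Yorke inequality for $\mc C^3$ uniformly expanding maps,
\[
|(F_{\phi,z})_{*}\psi|_{BV^1}\le \frac{A}{\xi}|\psi|_{BV^1}+B\|\psi\|_{L^1},
\]
whose constants are uniform in $\phi$ and $z$ thanks to Lemma \ref{F-Expanding}, to the signed density $\psi=\phi_{\bar z}-\phi_z$. Since $\psi$ has zero mean on $\T$, the Poincar\'e--Wirtinger inequality $\|\psi\|_{L^1}\le C_P|\psi|_{BV^1}$ folds the $L^1$ term back into BV${}^1$, yielding a contractive bound $\lambda|\phi_{\bar z}-\phi_z|_{BV^1}$ with $\lambda=\tfrac{A}{\xi}+BC_P$. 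Integrating over $z,\bar z\in B(\omega,r)$, dividing by $r^p$, and taking supremum then produces
\[
\mathtt{var}_{p,BV^1}(\mc F\phi)\le\lambda\,\mathtt{var}_{p,BV^1}(\phi)+K_\#\,\mathtt{var}_{p,L^1}(W).
\]
If $\lambda<1$, iterating gives $\mathtt{var}_{p,BV^1}(\mc F^n\phi)\le \lambda^n\,\mathtt{var}_{p,BV^1}(\phi)+\tfrac{K_\#}{1-\lambda}\mathtt{var}_{p,L^1}(W)$, and choosing $M\ge\tfrac{2K_\#}{1-\lambda}\mathtt{var}_{p,L^1}(W)$ together with $\bar n$ large enough that $\lambda^{\bar n}\,\mathtt{var}_{p,BV^1}(\phi)\le M/2$ delivers the claimed invariance.

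The main obstacle is securing the contraction constant $\lambda<1$. The LY constant $A$ typically exceeds $1$, so a one-step contraction demands strong bare expansion $\sigma$ of $f$ (noting $\xi(\alpha)\to\sigma$ as $\alpha\to 0$). If $\sigma$ alone does not suffice, I would pass to a higher iterate: writing $(\mc F^k\phi)_z=(G_{k,z})_{*}\phi_z$ with $G_{k,z}=F_{\mc F^{k-1}\phi,z}\circ\cdots\circ F_{\phi,z}$, the composition is uniformly expanding of rate $\xi^k$ (again by Lemma \ref{F-Expanding}), so the iterated LY inequality $|(G_{k,z})_{*}\psi|_{BV^1}\le\tfrac{A_k}{\xi^k}|\psi|_{BV^1}+B_k\|\psi\|_{L^1}$ holds with $A_k, B_k$ uniformly bounded in $k$. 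The first-term analog of Proposition \ref{DifDynSameDensityDERIVADA} for $G_{k,z}$ is obtained by telescoping the difference $(G_{k,\bar z})_{*}-(G_{k,z})_{*}$ into $k$ single-step differences interleaved with pushforwards and applying the uniform bounds; taking $k=\bar n$ large enough makes the combined coefficient $\lambda_{\bar n}<1$, completing the argument.
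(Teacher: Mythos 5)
Your overall decomposition is the right one, and the treatment of the ``same density, different fiber map'' term via Proposition~\ref{DifDynSameDensityDERIVADA} (including the telescoping strategy for higher iterates) matches what the paper does for the corresponding term. The genuine gap is in the other term, where you need to contract $|(G_{n,z})_*(\phi_{\bar z}-\phi_z)|_{BV^1}$.

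Your mechanism — Lasota--Yorke plus the zero-mean Poincar\'e-type bound $\|\psi\|_{L^1}\le C_P|\psi|_{BV^1}$ — does not yield a contraction, and passing to a higher iterate does not repair it. Iterating the LY inequality $k$ times gives
\[
|(G_{k,z})_*\psi|_{BV^1}\le \sigma^{-k}|\psi|_{BV^1}+D\,\frac{1-\sigma^{-k}}{1-\sigma^{-1}}\|\psi\|_{L^1},
\]
so the coefficient multiplying $\|\psi\|_{L^1}$ converges to $D/(1-\sigma^{-1})$ rather than to zero; folding it back into $|\psi|_{BV^1}$ via $C_P$ gives an effective constant $\lambda_k$ that tends to a fixed positive number as $k\to\infty$, and there is no reason it should be below $1$. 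LY inequalities control growth, not decay: to conclude that $(G_{n,z})_*$ genuinely contracts zero-mean BV functions you also need the spectral gap of the uncoupled operator $(f)_*$ (i.e.\ exponential decay of correlations/mixing), and then a perturbative argument showing that the spectral gap survives under sequential composition of nearby transfer operators. This is precisely what the paper's Lemma~\ref{Lem:MemLoss} supplies, via Lemma~19 of \cite{CGT2}, whose hypotheses (ML1)--(ML3) encode exactly the LY inequality, closeness of $(F_{\nu_i,z})_*$ to $(f)_*$, and the mixing of $(f)_*$. Without an ingredient of that type the contraction claim in your Step is unjustified, and the rest of the argument (taking the essential supremum and integrating to bound $\mathtt{var}_{p,BV^1}$) inherits the gap.

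Once Lemma~\ref{Lem:MemLoss} is invoked to produce $|(G_{n,z})_*(\phi_{\bar z}-\phi_z)|_{BV^1}\le Ce^{-\lambda n}\|\phi_{\bar z}-\phi_z\|_{BV^1}$, the rest of your outline — relating $\|\cdot\|_{BV^1}$ back to $|\cdot|_{BV^1}$ via Lemma~\ref{Lem:RelationL1andBV1}, choosing $\bar n$ to make the contraction factor $<1$, integrating the one-fiber estimate against $\omega$ and $r$ to get $\mathtt{var}_{p,BV^1}(\mc F^n\phi)\le\tau\,\mathtt{var}_{p,BV^1}(\phi)+O(\alpha)\mathtt{var}_{p,L^1}(W)$, and picking $M$ to close the invariance — all lines up with the paper's proof.
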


Let's introduce some notation that will be helpful in the computations to come. Define $\phi_0:=\phi$, $\phi_k:=\mc F^k\phi$, and denote $F^{n}_{\phi,z}=F_{\phi_{n-1},z}\circ...\circ F_{\phi_1,z}\circ F_{\phi_0,z}$ (similar notation can be defined with a general measure $\nu\in \mc M_{1,\Leb}$ rather than the density $\phi$).

We are going to use the following lemma that proves exponential loss of memory for the sequential composition of transfer operators $(F_{\phi,z})_*$, provided that the coupling strength is sufficiently small.
\begin{lemma}\label{Lem:MemLoss}
Under the hypotheses of Lemma \ref{F-Expanding}, there is $\alpha_0>0$ sufficiently small, $C>0,\,\lambda>0$ such that if $|\alpha|<\alpha_0$,  for any sequence $\{\nu_n\}_{n=1}^{\infty}$ in $ \mc M_{1,\Leb}$, and any $\psi\in \mc B_{BV^1}$ with $\int\psi=0$, and for a.e. $z\in[0,1]$
\begin{equation}\label{Eq:MemoryLoss}
\| [F_{\nu_{n},z}\circ...F_{\nu_2,z}\circ F_{\nu_1,z}]_*\psi\|_{BV^1}\le C e^{-\lambda n}\|\psi\|_{BV^1}.
\end{equation}
\end{lemma}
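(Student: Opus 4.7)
My proof plan combines a uniform Lasota--Yorke inequality for each fiber transfer operator with a Birkhoff cone contraction argument for the sequential composition. First, I would observe that Lemma~\ref{F-Expanding} supplies, for $|\alpha|$ small enough, constants $\xi > 1$, $K$, $K'$ independent of $\nu \in \mc M_{1,\Leb}$ and a.e.\ $z \in [0,1]$ controlling the expansion, the $\mc C^3$ norm, and the distortion of every $F_{\nu,z}$. The classical Lasota--Yorke estimate for $\mc C^2$ uniformly expanding circle maps then gives, uniformly in $\nu$ and $z$,
\[
|(F_{\nu,z})_* \varphi|_{BV^1} \le \xi^{-1} |\varphi|_{BV^1} + C_0 \|\varphi\|_{L^1}, \qquad \|(F_{\nu,z})_* \varphi\|_{L^1} \le \|\varphi\|_{L^1},
\]
with $C_0 = C_0(K, K')$. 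Writing $\mc T_{n,z} := (F_{\nu_n,z})_* \circ \cdots \circ (F_{\nu_1,z})_*$ and iterating yields
\[
|\mc T_{n,z} \varphi|_{BV^1} \le \xi^{-n} |\varphi|_{BV^1} + \tfrac{C_0}{1-\xi^{-1}} \|\varphi\|_{L^1}.
\]
This already delivers exponential decay of the $BV^1$ seminorm and reduces the claim to showing $\|\mc T_{n,z} \psi\|_{L^1} \le C' e^{-\lambda n} \|\psi\|_{L^1}$ whenever $\int \psi = 0$.

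For the $L^1$ decay I would use a Birkhoff cone argument. Set
\[
\mc C_a := \{\varphi \in \mc B_{BV^1} : \varphi \ge 0, \ |\varphi|_{BV^1} \le a \textstyle\int_\T \varphi\}.
\]
The Lasota--Yorke inequality above, applied to non-negative densities (for which $\|\varphi\|_{L^1} = \int \varphi$), shows that for $a$ chosen large enough one has $(F_{\nu,z})_* \mc C_a \subset \mc C_{a'}$ with $a' = \xi^{-1} a + C_0 < a$, uniformly in $\nu, z$. The uniform distortion bound from Lemma~\ref{F-Expanding} further gives a uniform pointwise lower bound $(F_{\nu,z})_* \varphi \ge c \int \varphi$ on the cone, and together with the uniform upper bound on $(F_{\nu,z})_* \varphi$ this yields a finite, uniform Hilbert-projective diameter of $(F_{\nu,z})_* \mc C_a$ inside $\mc C_a$. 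Birkhoff's contraction theorem then provides $\theta \in (0,1)$, independent of $\nu$ and $z$, such that each $(F_{\nu,z})_*$ contracts the projective metric $\Theta_a$ on $\mc C_a$ by the factor $\theta$, and so does every $\mc T_{n,z}$ by $\theta^n$.

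To assemble the estimate, I would split $\psi = \psi^+ - \psi^-$ with $\int \psi^+ = \int \psi^- = \tfrac12 \|\psi\|_{L^1}$. By the iterated Lasota--Yorke bound, after $n_0$ steps depending only on $|\psi|_{BV^1}/\|\psi\|_{L^1}$, both normalised evolved parts $\mc T_{n_0,z} \psi^\pm / \int \psi^\pm$ enter $\mc C_a$; thereafter the Hilbert distance between them contracts by $\theta^{n-n_0}$, and the standard comparison between $\Theta_a$ and the $L^1$ distance on bounded slices of $\mc C_a$ converts this into $\|\mc T_{n,z} \psi\|_{L^1} \le C_1 e^{-\lambda n} \|\psi\|_{L^1}$ with $\lambda := -\log \theta$. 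Plugging this back into the iterated Lasota--Yorke bound produces the stated exponential contraction in $\|\cdot\|_{BV^1}$. The main obstacle is verifying the two cone properties \emph{uniformly} over the family $\{(F_{\nu,z})_* : \nu \in \mc M_{1,\Leb}, \text{ a.e.\ } z\}$: both the strict invariance into a smaller cone and the finite diameter of the image rest on the $\nu$- and $z$-independence of the constants from Lemma~\ref{F-Expanding}, so the smallness threshold $\alpha_0$ and the cone aperture $a$ must be tuned simultaneously to make invariance, finite projective diameter, and the entry-time estimate all hold at once.
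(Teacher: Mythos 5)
Your approach is genuinely different from the paper's. The paper invokes an abstract sequential memory-loss result (Lemma~19 of \cite{CGT2}), which is fundamentally \emph{perturbative}: it checks a uniform Lasota--Yorke inequality (their ML1), that every $L_i=(F_{\nu_i,z})_*$ is $O(\alpha)$-close, in the strong-to-weak operator norm, to the fixed reference operator $L_0=f_*$ (their ML2), and that $L_0$ itself has a spectral gap (their ML3). Smallness of $\alpha$ enters essentially through ML2. Your plan is a direct Birkhoff cone contraction for the sequential products, sharing the Lasota--Yorke step but bypassing any comparison to a reference operator; in principle this is more robust (it does not require the $L_i$ to cluster around a single $L_0$), but it is also technically heavier.

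There is, however, a genuine gap in one key step. You assert that the uniform distortion bound from Lemma~\ref{F-Expanding} gives a pointwise lower bound $(F_{\nu,z})_*\varphi\ge c\int\varphi$ on the cone $\mc C_a=\{\varphi\ge 0,\ |\varphi|_{BV^1}\le a\int\varphi\}$, and from there a finite Hilbert-projective diameter for the image. The distortion bound controls the ratio $F''/(F')^2$; it does \emph{not} by itself produce a density lower bound. A single application of the transfer operator can return zero at some $y$ whenever the entire fiber $F_{\nu,z}^{-1}(y)$ falls inside the zero set of $\varphi$. For $\int\varphi=1$ and $|\varphi|_{BV^1}\le a$ one only gets $\essinf\varphi\ge 1-a/2$, so if the aperture $a$ must exceed $2$ to make the cone invariant (which is forced when $C_0/(1-\xi^{-1})\ge 2$, e.g.\ when $\xi$ is close to $1$), the densities in $\mc C_a$ can vanish on sets of positive measure and the claimed lower bound fails. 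The finite-diameter property therefore needs a separate argument, typically by passing to a fixed iterate $n_0$ chosen so that the preimage sets become dense enough to meet the bulk of $\varphi$'s support; this step depends delicately on the degree $d$, the aperture $a$, and the uniform expansion, and must itself be made uniform over $\nu$ and a.e.\ $z$. Without it, Birkhoff's theorem does not apply and the $L^1$ decay for zero-mean $\psi$ is not established. Everything else in your outline (the uniform Lasota--Yorke with $\nu$- and $z$-independent constants, the decomposition $\psi=\psi^+-\psi^-$, and the final reassembly) is sound; the pointwise lower bound is the missing and nontrivial piece.
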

\begin{proof}
The proof is an application of Lemma 19 of \cite{CGT2}. More precisely, we are going to show that for sufficiently small $\alpha_0$, the sequence of operators $L_i:= (F_{\nu_{i},z})_*$ satisfies the assumptions of this lemma which provides the memory loss estimate \eqref{Eq:MemoryLoss}. Define $L_0:= (f)_*$, i.e.  the transfer operator of the uncoupled dynamics.

$L_i$'s are Markov operators on $(L^1(\T), \|\cdot \|_{L^1})$ and, for $\alpha_0>0$ sufficiently small, also leave $(\mc B_{BV^1},\|\cdot\|_{BV^1})$ invariant -- as consequence of Lemma \ref{F-Expanding} and Lemma \ref{Lem:BoundUnifExpMaps}. We take these spaces as our weak and strong space respectively when applying Lemma 19 \cite{CGT2}. We now only need to show that for these spaces conditions (ML1), (ML2), and (ML3) of Lemma 19 \cite{CGT2} are satisfied.

\smallskip
(ML1) follows from the fact that $\|L_i\psi\|_{L^1}\leq \|\psi\|_{L^1}$, and, provided $\alpha_0>0$ is small enough, Lemma \ref{F-Expanding} implies that the $L_i$ are all transfer operators of uniformly expanding maps with lower bound on the expansion and upper bound on the distortion independent of $i$. Arguing as in the proof of Lemma \ref{Lem:BoundUnifExpMaps}, we show that they all satisfy a common Lasota-Yorke inequality:
\begin{align*}
\|L_{i+m}...L_i\psi\|_{BV^1}&\le A\lambda_1^m\|\psi\|_{BV^1}+B\|\psi\|_{L^1}\\
\|L_0^m\psi\|_{BV^1}&\le A\lambda_1^m\|\psi\|_{BV^1}+B\|\psi\|_{L^1}
\end{align*}
for $A,B>0$ and $\lambda_1\in(0,1)$ independent of $i,m\in \N$.
\smallskip

To prove (ML2), notice that 
\begin{align*}
\| (L_i-L_0)\psi\|_{L^1}&=\sup_{\|g\|_\infty\le 1}\int g(L_i\psi-L_0\psi)\\
&=\sup_{\|g\|_\infty\le 1}\int (g\circ F_{\nu_i,z}-g\circ f)\psi\\
&=\sup_{\|g\|_\infty\le 1}\int \Phi'\psi
\end{align*}
where we defined
\[
\Phi(x):=\int_{f(x)}^{F_{\nu_i,z}(x)}g(s)ds 
\]
that satisfies $|\Phi(x)|\le \sup_{x\in\T}|F_{\nu_i,z}(x)-f(x)|=O(\alpha)$, implying
\[
\| (L_i-L_0)\psi\|_{L^1}\le O(\alpha) \|\psi\|_{BV^1}.
\]
This establishes that the operator norm of $(L_i-L_0)$ from the strong to the weak space is of order $\alpha$, and thus can be made arbitrarily small reducing $\alpha_0>0$.
\smallskip

Finally, (ML3) follows from classical results on exponential decay of correlations for uniformly expanding maps acting on functions of bounded variation (see e.g. \cite{NiloofarMarkLiverani})
\end{proof}

\begin{proof}[Proof of Lemma \ref{F-invariante}]
Pick $\phi\in \mc A_{\bold M}$.  We need to estimate
\begin{align*}
|(\mathcal{F}^n\varphi)_{z} - (\mathcal{F}^n\varphi)_{\bar{z}} |_{BV^1}             & =| (F^n_{\varphi,z})_{*}\varphi_{z} - (F^n_{\varphi,\bar{z}})_{*}\varphi_{\bar{z}} |_{BV^1}  \\
	& \leq \underbrace{| (F^n_{\varphi,z})_{*}\varphi_{z} - (F^n_{\varphi,z})_{*}\varphi_{\bar{z}} |_{BV^1}}_{\mytag{$A_{1}$}{termA1}} + \underbrace{| (F^n_{\varphi,z})_{*}\varphi_{\bar{z}} - (F^n_{\varphi,\bar{z}})_{*}\varphi_{\bar{z}} |_{BV^1}}_{\mytag{$A_{2}$}{termA2}}
\end{align*}
We start by giving a bound for the term (\ref{termA1}). Since $\phi_z-\phi_{\bar z}\in\mc B_{BV^1}$ and has zero integral, picking $\alpha$ sufficiently small and applying Lemma \ref{Lem:MemLoss}, 
\begin{align*}
| (F^n_{\varphi,z})_{*}(\varphi_{z}-\varphi_{\bar{z}}) |_{BV^1} & \le \| (F^n_{\varphi,z})_{*}(\varphi_{z}-\varphi_{\bar{z}})\|_{BV^1}\\
&\le Ce^{-\lambda n}\|\varphi_{z}-\varphi_{\bar{z}}\|_{BV^1}
\end{align*}
Using Lemma \ref{Lem:RelationL1andBV1}, we get 
\[
| (F^n_{\varphi,z})_{*}(\varphi_{z}-\varphi_{\bar{z}}) |_{BV^1}\le Ce^{-\lambda n}3|\varphi_{z}-\varphi_{\bar{z}}|_{BV^1}.
\]
Fix $\bar n$ so that for any $n\ge \bar n$, $Ce^{-\lambda n}3\le \tau\in(0,1)$. 

We now compute the term (\ref{termA2}). A telescopic sum and triangle inequality gives
\begin{align}
| (F^n_{\varphi,z})_{*}\varphi_{\bar{z}} - (F^n_{\varphi,\bar{z}})_{*}\varphi_{\bar{z}} |_{BV^1}&\le \sum_{j=0}^{n-1} |  (F_{\phi_{n-1}, z})_*...(F_{\phi_{j+1}, z})_*[(F_{\phi_{j}, z})_*-(F_{\phi_{j},\bar z})_*] (F_{\phi_{j-1},\bar z})_*...(F_{\phi_0,\bar z})_*\phi_{\bar z} |_{BV^1}\nonumber\\
&\le \sum_{j=0}^{n-1} Ce^{-\lambda (n-1-j)}  \|[(F_{\phi_{j}, z})_*-(F_{\phi_{j},\bar z})_*] (F_{\phi_{j-1},\bar z})_*...(F_{\phi_0,\bar z})_*\phi_{\bar z}\|_{BV^1}\label{Eq:Step1ML}\\
&\le O(\alpha)\sum_{j=0}^{n-1}Ce^{-\lambda (n-1-j)} \|(F_{\phi_{j-1},\bar z})_*...(F_{\phi_0,\bar z})_*\phi_{\bar z}\|_{BV^1}\label{Eq:Step2DistOp}\\
&\le  O(\alpha) \Vert W(z,\cdot)-W(\bar{z},\cdot)\Vert_{L^1}CM_1(1-e^{-\lambda})^{-1}\label{Eq3:Boundnormfiber}\\
&\le O(\alpha) \Vert W(z,\cdot)-W(\bar{z},\cdot)\Vert_{L^1}
\end{align}
where to obtain \eqref{Eq:Step1ML} we used Lemma \ref{Lem:MemLoss}; to obtain \eqref{Eq:Step2DistOp} we used Lemma \ref{DifDynSameDensityDERIVADA} and Lemma \ref{Lem:RelationL1andBV1}; to obtain  \eqref{Eq3:Boundnormfiber} we repeatedly applied Lemma \ref{Lem:BoundUnifExpMaps}. Notice that $O(\alpha)$ can be chosen uniformly in $n$.

So, we can see that for any $n\ge\bar n$
\begin{align}\label{desigualdadgrandisimaRESUELTA}
|(\mathcal{F}^n\varphi)_{z} - (\mathcal{F}^n\varphi)_{\bar{z}} |_{BV^1}   &  \leq \tau |  \varphi_{z} - \varphi_{\bar{z}} |_{BV^1}   + O(\alpha) \Vert W(z,\cdot)-W(\bar{z},\cdot)\Vert_{L^1}
\end{align}
so, taking the essential supremum on $z,\bar{z}\in B(\omega,r)$ and then the integral, we obtain
\begin{equation}
\mathtt{var}_{p,BV^1}(\mathcal{F}^n\varphi)\leq\tau\ \mathtt{var}_{p,BV^1}(\varphi) +O(\alpha)\ \mathtt{var}_{p,L^1}(W).
\end{equation}
Now, since $\mathtt{var}_{p,L^1}(W)<\infty$ 
\begin{equation*}
\mathtt{var}_{p,BV^1}(\mathcal{F}^n\varphi)\leq \tau\mathtt{var}_{p,BV^1}(\varphi) + B_{\#} 
\end{equation*}
where $B_\#$ is a constant independent of $\mc F^n(\phi)$.
It is now immediate to notice that 
\begin{align}
\Vert \mathcal{F}^n\varphi \Vert_{\textit{S}} & = \mathtt{var}_{p,BV^1}(\mathcal{F}^n\varphi)+\Vert \mathcal{F}^n\varphi \Vert_{"1"}\nonumber\\
& \leq\tau \ \mathtt{var}_{p,BV^1}(\varphi) + \bar B \label{Eq:}\\
& \leq\tau \, \Vert \varphi \Vert_{\textit{S}} + \bar {B}
\end{align}
Note that as $\varphi$ belongs to $\mathcal{B}_{\textit{S,M}}$ then $\Vert \varphi \Vert_{\textit{S}} \leq M$, so for $M\geq M_{0}:=\bar {B}(1-\tau)^{-1}$ the set $\mc A_{\bo M}$ is invariant.
\end{proof}

\subsubsection{Convexity and  pre-compactness of $\mc A_{ \bold M}$}
\begin{lemma}\label{espacioConvexo}
For any $M, M_1, M_2$, the set  $
\mc A_{\bo M}:=\mc B_{\textit{S,M}}\cap \tilde {\mc B}_{BV^1,M_1}\cap \tilde{\mc B}_{BV^2,M_2}\cap \mc M_{1,\Leb}$ is convex. 
\end{lemma}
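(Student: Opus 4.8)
The plan is to exhibit $\mc A_{\bo M}$ as a finite intersection of convex sets and then invoke the elementary fact that any intersection of convex sets is convex. Thus it suffices to check, one at a time, that each of $\mc B_{S,M}$, $\tilde{\mc B}_{BV^1,M_1}$, $\tilde{\mc B}_{BV^2,M_2}$, and $\mc M_{1,\Leb}$ is convex. The first of these is immediate: $\mc B_{S,M}$ is the ball of radius $M$ about the origin in the normed space $(\mc B_S,\|\cdot\|_{S})$, and balls in normed spaces are convex by the triangle inequality together with positive homogeneity of the norm.

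For $\tilde{\mc B}_{BV^i,M_i}$ with $i\in\{1,2\}$, I would take $\phi,\psi$ in the set and $t\in[0,1]$. By definition of \eqref{Set:RegOnFib} there are full-measure sets $Z_\phi,Z_\psi\subset[0,1]$ with $\phi_z\in\mc B_{BV^i,M_i}$ for $z\in Z_\phi$ and $\psi_z\in\mc B_{BV^i,M_i}$ for $z\in Z_\psi$. On the still-full-measure set $Z_\phi\cap Z_\psi$, linearity of the slicing map $\phi\mapsto\phi_z$ gives $(t\phi+(1-t)\psi)_z=t\phi_z+(1-t)\psi_z$, which lies in $\mc B_{BV^i,M_i}$ because that ball, being a ball in the normed space $(\mc B_{BV^i},\|\cdot\|_{BV^i})$, is convex. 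Hence $t\phi+(1-t)\psi\in\tilde{\mc B}_{BV^i,M_i}$. For $\mc M_{1,\Leb}$, viewed as probability densities on $[0,1]\times\T$ with fibrewise mass one, the defining conditions are nonnegativity, $\int\phi=1$, and $\int_\T\phi(z,x)\,dx=1$ for a.e.\ $z$; each of these is preserved by convex combinations (the last two by linearity of the integral, the first trivially). Equivalently, at the level of measures, a convex combination of probability measures is a probability measure and the first marginal depends linearly on the measure.

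Combining, $\mc A_{\bo M}$ is an intersection of convex sets and is therefore convex, for any choice of $M,M_1,M_2$. I do not expect a genuine obstacle in this argument; the only point requiring a moment's care is the bookkeeping of the \emph{for a.e.\ $z$} quantifier in the fiberwise conditions, which is handled simply by intersecting the finitely many full-measure sets that appear.
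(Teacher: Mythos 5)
Your proof is correct and is essentially the paper's argument reorganized: the paper shows directly that $\tau\varphi+(1-\tau)\psi$ satisfies each of the defining bounds, which amounts to verifying the subadditivity of $\mathtt{var}_{p,BV^1}$, of $\|\cdot\|_{``1"}$, and of $\|\cdot\|_{BV^i}$ on fibers, whereas you package the same content by noting that $\mc A_{\bo M}$ is a finite intersection of balls in normed (or seminormed) spaces together with $\mc M_{1,\Leb}$, each of which is convex. The only slight difference in emphasis is that you take the normed-space structure of $(\mc B_S,\|\cdot\|_S)$ as given (which the paper's definitions license, since it declares $\mc B_S$ a Banach space), while the paper re-derives the relevant triangle inequality for $\mathtt{var}_{p,BV^1}$ explicitly; both are fine.
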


\begin{proof}
Consider $\varphi, \psi\in \mc B_{\textit{S,M}}\cap\mc M_{1,\Leb}$ this means that $\Vert \varphi \Vert_{\textit{S}},\,\ \Vert \psi \Vert_{\textit{S}}\leq M$ and for any $\tau \in [0,1]$ we have that
\begin{align*}
\mathtt{osc}_{BV^1}(\tau \varphi +(1-\tau)\psi,\omega,r) 								& =\mathop{\mathrm{ess~sup}}\limits_{z,\bar{z}\in B(\omega,r)}| \tau \varphi_{z} +(1-\tau)\psi_{z} -\tau \varphi_{\bar{z}} -(1-\tau)\psi_{\bar{z}}|_{BV^1} \\ 
	& \leq \mathop{\mathrm{ess~sup}}\limits_{z,\bar{z}\in B(\omega,r)}|\tau \varphi_{z} -\tau \varphi_{\bar{z}} |_{BV^1} + \mathop{\mathrm{ess~sup}}\limits_{z,\bar{z}\in B(\omega,r)}|(1-\tau)\psi_{z} -(1-\tau)\psi_{\bar{z}} |_{BV^1} \\
	& = \tau \ \mathtt{osc}_{BV^1}(\varphi,\omega,r) + (1-\tau) \ \mathtt{osc}_{BV^1}(\psi,\omega,r)
\end{align*}
Taking the integral on $[0,1]$ with respect to $\omega$ and dividing by $r^p$ ($r>0$)
\begin{equation*}
\dfrac{1}{r^{p}}\int_{[0,1]}\mathtt{osc}_{BV^1}(\tau \varphi +(1-\tau)\psi,\omega,r) \ d\omega	 \leq \tau \ \dfrac{1}{r^{p}}\int_{[0,1]} \mathtt{osc}_{BV^1}(\varphi,\omega,r) \ d\omega  + (1-\tau) \ \dfrac{1}{r^{p}}\int_{[0,1]} \mathtt{osc}_{BV^1}(\psi,\omega,r) \ d\omega
\end{equation*}
So, by definition, we have that 
\begin{equation}\label{convexVar_C1}
\mathtt{var}_{p,BV^1}(\tau \varphi +(1-\tau)\psi)\leq \tau \ \mathtt{var}_{p,BV^1}( \varphi ) + (1-\tau) \ \mathtt{var}_{p,BV^1}(\psi).
\end{equation}
On the other hand, by the definition of Wasserstein-Kantorovich distance, we have 
\begin{align*}
\Vert \tau \varphi_{z} +(1-\tau)\psi_{z} \Vert_{W^1}
	& = \sup_{
\substack{\Lip(g)\leq 1 \\ \Vert g \Vert_{\infty}\leq 1 } } \left| \int g \cdot (\tau \varphi_{z} +(1-\tau)\psi_{z}) \ d \Leb \right| \\
	& \leq \sup_{
\substack{\Lip(g)\leq 1 \\ \Vert g \Vert_{\infty}\leq 1 } } \left| \int \tau \ g \cdot  \varphi_{z} \ d \Leb \right| + \sup_{
\substack{\Lip(g)\leq 1 \\ \Vert g \Vert_{\infty}\leq 1 } } \left| \int (1-\tau) \ g \cdot \psi_{z} \ d \Leb \right|\\
	& = \tau \ \Vert \varphi_{z} \Vert_{W^1} + (1-\tau) \ \Vert \psi_{z} \Vert_{W^1}
\end{align*}
If we take the integral over the interval $[0,1]$ on both sides of the inequality, we have
\begin{equation}\label{convexNorm1}
\Vert \tau \varphi +(1-\tau)\psi \Vert_{``1"} \leq \tau \ \Vert \varphi \Vert_{``1"} + (1-\tau) \ \Vert \psi \Vert_{``1"}
\end{equation}
Finally, putting together inequalities (\ref{convexVar_C1}) and (\ref{convexNorm1}), we obtain
\begin{align*}
\Vert \tau \varphi +(1-\tau)\psi \Vert_{\textit{S}} & = \mathtt{var}_{p,BV^1}(\tau \varphi +(1-\tau)\psi) +  \Vert \tau \varphi +(1-\tau)\psi \Vert_{``1"} \\
	& \le \tau \ \Vert \varphi \Vert_{\textit{S}} + (1-\tau) \ \Vert \psi \Vert_{\textit{S}}\\
        & \le  M.
\end{align*}
Now consider $\psi, \varphi\in \tilde{\mc B}_{BV^i,M_i}$ so for a.e. $z\in [0,1]$ we have $\psi_{z}, \varphi_{z}\in \mc B_{BV^i,M_i}$ that is $\Vert \psi_{z}\Vert_{BV^i} <M_i$ and $\Vert \varphi_{z}\Vert_{BV^i} <M_i$ then 
\[
\Vert \tau \varphi_{z} +(1-\tau)\psi_{z} \Vert_{BV^i} \le \tau \Vert \varphi_{z} \Vert_{BV^i} +(1-\tau)\Vert \psi_{z} \Vert_{BV^i}\le M_i. 
\]
So, for almost any $z\in [0,1]$ we have $\tau \varphi_{z} +(1-\tau)\psi_{z}$ belongs to $\mc B_{BV^i,M_i}$ then $\tau \varphi +(1-\tau)\psi$ belongs to $\tilde{\mc B}_{BV^i,M_i}$. Therefore, $\mc A_{\bo M}$ is convex.
\end{proof}

{

\begin{lemma}\label{RelativamenteCompacto}
The set $\mc A_{\bo M}$ is relatively compact in $\mathcal{B}_{w}$.
\end{lemma}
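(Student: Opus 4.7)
The plan is to interpret the weak norm $\|\phi\|_{``1"}=\int_0^1\|\phi_z\|_{W^1}\,dz$ as an $L^1([0,1];W^1(\T))$-type norm and apply a Fréchet--Kolmogorov criterion. The two ingredients are: (i) fiberwise compactness in $W^1$, coming from the bound $\phi_z\in\mc B_{BV^1,M_1}$ for a.e.\ $z$; and (ii) equicontinuity in the base variable $z$, coming from the bound on $\mathtt{var}_{p,BV^1}(\phi)$ built into the strong norm.

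First I would observe that for probability densities on the compact torus, one has the elementary comparison $\|\psi\|_{W^1}\lesssim |\psi|_{BV^1}+\|\psi\|_{L^1}$, so $BV^1$ oscillations of disintegrations control their Wasserstein oscillations. Given a sequence $\{\phi^{(n)}\}\subset \mc A_{\bo M}$, I would fix a countable dense set $D\subset[0,1]$ of points at which $\phi^{(n)}_z\in\mc B_{BV^1,M_1}$ for every $n$. Helly's selection theorem gives compactness of $\mc B_{BV^1,M_1}$ in $L^1(\T)$, and $L^1$-convergence of densities implies $W^1$-convergence of the associated probability measures; so a diagonal extraction over $D$ produces a subsequence (still denoted $\phi^{(n)}$) such that $\phi^{(n)}_z$ converges in $W^1$ for every $z\in D$.

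Next I would convert the $\mathtt{var}_{p,BV^1}$ bound into a Besov-type translation estimate: by Fubini and the definition of $\mathtt{osc}_{BV^1}$, for all sufficiently small $h$,
\[
\int_0^{1-|h|}\|\phi^{(n)}_{z+h}-\phi^{(n)}_z\|_{W^1}\,dz\;\lesssim\; M\,|h|^p.
\]
Combining this translation estimate with pointwise-along-$D$ convergence, covering $[0,1]$ by finitely many balls centred at points of $D$, and using the uniform bound $\|\phi^{(n)}_z\|_{W^1}\le 1$ (since each $\phi^{(n)}_z$ is a probability density) for dominated-convergence control, a standard Fréchet--Kolmogorov argument gives that $\{\phi^{(n)}\}$ is Cauchy in $\|\cdot\|_{``1"}$, which is precisely relative compactness of $\mc A_{\bo M}$ in $\mc B_w$.

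The main obstacle is the passage from the essential-supremum oscillation $\mathtt{osc}_{BV^1}(\phi,\omega,r)$ to the translation-in-$z$ estimate displayed above: the essential suprema must be handled with some care via a Fubini argument on the two-variable oscillation $|\phi_z-\phi_{\bar z}|_{BV^1}$, using the fact that the double essential supremum over $B(\omega,r)\times B(\omega,r)$ dominates $\|\phi_{z+h}-\phi_z\|_{BV^1}$ on a set of $\omega$ of positive measure whenever $|h|<r$. Once that translation bound is in place, the Fréchet--Kolmogorov machinery applies directly, as in the framework of \cite{Galatolo2018}, and the remaining bookkeeping (diagonalization, covering, dominated convergence) is routine.
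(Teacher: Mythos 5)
Your strategy is genuinely different from the paper's. The paper proves this by a pure norm comparison: it shows, via Proposition~\ref{psig}, that $\|\varphi_z-\varphi_{\bar z}\|_{W^1}\le|\varphi_z-\varphi_{\bar z}|_{BV^1}$, hence $\mathtt{var}_{p,W^1}(\varphi)\le\mathtt{var}_{p,BV^1}(\varphi)$, and then cites Theorem~19 of \cite{Galatolo2018} for the assertion that bounded balls of the $\|\cdot\|_{``1"}+\mathtt{var}_{p,W^1}$ norm are relatively compact in $\mathcal{B}_w$. You instead try to re-derive the compactness directly via a Kolmogorov--Riesz/diagonalisation argument. That is a reasonable and more self-contained strategy, and most of the ingredients you list are sound: Helly gives $L^1(\T)$-compactness of $\mathcal{B}_{BV^1,M_1}$, $L^1$-convergence dominates $W^1$-convergence, diagonalisation over a countable set $D$ inside the full-measure set on which all fibers are controlled, and the trivial bound $\|\phi^{(n)}_z\|_{W^1}\le 1$ for probability densities.

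The gap is in the step you yourself flag as the main obstacle, and your sketched fix does not close it. The definition of $\mathtt{osc}_{BV^1}(\varphi,\omega,r)$ uses an essential supremum over the \emph{two-dimensional} set $B(\omega,r)\times B(\omega,r)$, i.e.\ a $\Leb^2$-a.e.\ statement. The displayed translation estimate
\[
\int_0^{1-|h|}\|\phi^{(n)}_{z+h}-\phi^{(n)}_z\|_{W^1}\,dz\;\lesssim\; M\,|h|^p
\]
for a \emph{fixed} $h$ requires a bound along the diagonal line $z'=z+h$, which has $\Leb^2$-measure zero, so it does not follow from the essential-supremum definition, and the observation that the set of valid $\omega$ has positive measure does not help: for each such $\omega$ the oscillation bound holds only for $\Leb^2$-a.e.\ pair in $B(\omega,r)^2$, not for the specific pair $(z,z+h)$. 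What one \emph{can} extract, by integrating $|\varphi_z-\varphi_{z'}|_{BV^1}$ over $(z,z',\omega)$ jointly and applying Fubini (as you hint), is an averaged-in-$h$ estimate of the type
\[
\int_{|h|<r}\int_0^1\|\phi^{(n)}_{z+h}-\phi^{(n)}_z\|_{W^1}\,dz\,dh\;\lesssim\; r^{1+p}\,\mathtt{var}_{p,BV^1}(\phi^{(n)}),
\]
which is weaker but still suffices for compactness if you switch to a mollification-based variant of the Kolmogorov--Riesz criterion (compactness of the mollified family plus uniform approximation by mollifications). As written, though, the pointwise-in-$h$ estimate you invoke is not justified, and your Fréchet--Kolmogorov argument as stated rests on it. The paper sidesteps this entirely by offloading the compactness to a cited theorem; if you want the direct proof you should replace the pointwise-in-$h$ translation estimate with the averaged version and adjust the compactness criterion accordingly.
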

\begin{proof}
In \cite{Galatolo2018} the author considers the space $\mc B_\text{\textit{p-BV}}= \{ \varphi:[0,1]\times\T\rightarrow\R: \ \Vert \varphi \Vert_{\textit{p-BV}}<\infty \}$ with $\Vert \varphi \Vert_{\textit{p-BV}} = \Vert \varphi \Vert_{``1"} + \mathtt{var}_{\textit{p}, W^1}(\varphi)$ -- where $\mathtt{var}_{\textit{p}, W^1}$ is defined similarly to $ \mathtt{var}_{\textit{p}, BV^1}$, but with the $W^1$-norm instead of the $BV^{1}$ seminorm (see Section 3 in \cite{Galatolo2018}) -- and proves that any bounded ball in $\mc B_\text{\textit{p-BV}}$ is relatively compact in $\mc B^{}_{w}$. We are going to show that $\mc B_{\textit{S,M}}\subset \mc B_{\textit{p-BV}}$  and $\|\cdot\|_{\textit{p-BV}}\le \|\cdot\|_{\textit{S,M}}$, which will imply that also $\mathcal{B}_{\textit{S,M}}$ is relatively compact in $\mathcal{B}^{}_{w}$.

As the first step, we need to verify that the bounded ball as a subset of our strong space $\mathcal{B}_{\textit{S,M}}$ is a subset of the \textit{p-BV} space.  In fact, by Proposition \ref{psig}
\begin{equation*}
    W^{1}(  \varphi_{z},\varphi_{\bar{z}} ) = \sup_{
\substack{\Lip(g)\leq 1 \\ \Vert g \Vert_{\infty}\leq 1 } } \left| \int_\T g(s) (\varphi_{z} - \varphi_{\bar{z}})(s) \,ds \right| \leq \sup_{\substack{g \in \mc C^1(\mathbb{T}) \\ \|g\|_\infty \leq 1}} \int_{\mathbb{T}} g'(s)(\varphi_{z} - \varphi_{\bar{z}})(s)\,ds = | \varphi_{z} - \varphi_{\bar{z}}|_{BV^1}
\end{equation*}
this implies that  $\Vert  \varphi_{z} - \varphi_{\bar{z}} \Vert_{W^1} \leq | \varphi_{z} - \varphi_{\bar{z}}|_{BV^1}$. Taking the essential supremum on $z,\bar{z}\in B(\omega,r)$ and then the integral on $[0,1]$ on both sides of the inequality, we obtain
\begin{equation*}
\mathtt{var}_{\textit{p}, W^1}(\varphi) \leq \mathtt{var}_{\textit{p}, BV^1}(\varphi).
\end{equation*}
From the definitions $\|\cdot\|_{\textit{p-BV}} \le \|\cdot\|_{\textit{S}}$ and $\mc B_{\textit{S,M}}$ is a subset of $\mc B_{\textit{p-BV,M}}$, the ball in $\mc B_\text{\textit{p-BV}}$ centered at zero with radius $M$.
Therefore,  given a sequence of densities $\{ \varphi_{n} \}_{n\in\mathbb{N}}$ in $\mathcal{B}_{\textit{S,M}}$ such that $\Vert \varphi_{n}\Vert_{\textit{S}}\leq M$, $\Vert \varphi_{n}\Vert_{\textit{p-BV}}\leq \Vert \varphi_{n}\Vert_{\textit{S}}\leq M$ then,  applying Theorem 19 in \cite{Galatolo2018}, there exist  a subsequence $\{\varphi_{n_k}\}_{k\in\mathbb{N}}$ and $\varphi\in \mathcal{B}^{}_{w}$  such that $\Vert \varphi_{n_k} - \varphi\Vert_{``1"}\to 0$, which means that the set $\mathcal{B}_{\textit{S,M}}$ is relatively compact in $\mathcal{B}^{}_{w}.$ Finally, note that by  definition, the closure $\bar{\mathcal{B}}_{\textit{S,M}}$ is compact in $\mathcal{B}^{}_{w}.$ Since $\mc A_{\bo M}\subset \mathcal{B}_{\textit{S,M}}$ we have $\bar{\mc A}_M\subset \bar{\mathcal{B}}_{\textit{S,M}}$. As  $\bar{\mathcal{B}}_{\textit{S,M}}$ is compact then $\bar{\mc A}_M$ is a closed subset of a compact space, and hence compact, which implies that $\mc A_{\bo M}$ is relatively compact in $\mathcal{B}^{}_{w}.$
\end{proof}}

\subsubsection{The STO is Lipschitz in the Weak Norm}
\begin{lemma}\label{cont-STO} Assume that $W\in {L^{\infty}([0,1]^2)}$. Then the Self-Consistent transfer operator 
$\mathcal{F}: (\mathcal{B}_{\textit{S,M}}\cap \mc M_{1,\Leb},\Vert \cdot \Vert_{``1"})\to (\mathcal{B}_{\textit{S,M}}\cap \mc M_{1,\Leb},\Vert \cdot \Vert_{``1"})$ is Lipschitz continuous:
\begin{equation*}
\Vert \mathcal{F}\varphi - \mathcal{F}\psi \Vert_{``1"} \leq \mathrm{Lip}(\mathcal{F}) \, \Vert \varphi - \psi \Vert_{``1"}
\end{equation*}
for any $\phi,\psi\in \mathcal{B}_{\textit{S,M}}\cap \mc M_{1,\Leb},$ with  
\[\mathrm{Lip}(\mathcal{F})\leq \sup_{x\in\mathbb{T}} \vert f'(x) \vert +2 \, \alpha \|W\|_{L^{\infty}([0,1]\times [0,1])}\, \Vert h \Vert_{\mc C^1}.
\]
\end{lemma}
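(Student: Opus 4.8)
The plan is to reduce the estimate to a fiberwise bound and then integrate in $z$. Since $\mc F\varphi=(F_\varphi)_*\varphi$ has disintegration $(\mc F\varphi)_z=(F_{\varphi,z})_*\varphi_z$ (the Remark following the definition of the STO), one has
\[
\Vert \mc F\varphi-\mc F\psi\Vert_{``1"}=\int_0^1\Vert (F_{\varphi,z})_*\varphi_z-(F_{\psi,z})_*\psi_z\Vert_{W^1}\,dz ,
\]
so it suffices to control the integrand for a.e.\ $z$. The self-consistency of $\mc F$ means that \emph{both} the transported density and the fiber map change when passing from $\varphi$ to $\psi$; accordingly I would split by the triangle inequality
\[
\Vert (F_{\varphi,z})_*\varphi_z-(F_{\psi,z})_*\psi_z\Vert_{W^1}\le \underbrace{\Vert (F_{\varphi,z})_*(\varphi_z-\psi_z)\Vert_{W^1}}_{(\mathrm I)}+\underbrace{\Vert (F_{\varphi,z})_*\psi_z-(F_{\psi,z})_*\psi_z\Vert_{W^1}}_{(\mathrm{II})} .
\]

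For $(\mathrm I)$ I would use that push-forward under a fixed map $T$ is Lipschitz for $\Vert\cdot\Vert_{W^1}$ with constant $\max(1,\Lip(T))$: in the dual formulation, if $g$ is an admissible test function ($\Vert g\Vert_\infty\le1$, $\Lip(g)\le1$) then $g\circ T/\max(1,\Lip(T))$ is again admissible. Differentiating \eqref{Eq:SetDynFiber}, using that each $\varphi_{z'}$ is a probability density and $\int_0^1|W(z,z')|\,dz'\le\Vert W\Vert_{L^\infty([0,1]^2)}$ (integral over a set of measure one), gives $\Lip(F_{\varphi,z})=\sup_x|F'_{\varphi,z}(x)|\le\sup_x|f'(x)|+|\alpha|\,\Vert h\Vert_{\mc C^1}\Vert W\Vert_{L^\infty([0,1]^2)}$ for a.e.\ $z$ (this is also part of Lemma~\ref{F-Expanding}), hence
\[
(\mathrm I)\le\Big(\sup_x|f'(x)|+|\alpha|\,\Vert h\Vert_{\mc C^1}\Vert W\Vert_{L^\infty([0,1]^2)}\Big)\,\Vert\varphi_z-\psi_z\Vert_{W^1} .
\]

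For $(\mathrm{II})$, again from the dual formulation with $\Lip(g)\le1$ and $\psi_z$ a probability density,
\[
(\mathrm{II})\le\sup_{\Vert g\Vert_\infty\le1,\ \Lip(g)\le1}\int_\T\big|g(F_{\varphi,z}(x))-g(F_{\psi,z}(x))\big|\,\psi_z(x)\,dx\le\sup_{x\in\T}\big|F_{\varphi,z}(x)-F_{\psi,z}(x)\big| .
\]
Since from \eqref{Eq:SetDynFiber}
\[
F_{\varphi,z}(x)-F_{\psi,z}(x)=\alpha\int_0^1 dz'\,W(z,z')\int_\T h(x,y)\,(\varphi_{z'}-\psi_{z'})(y)\,dy ,
\]
and $y\mapsto h(x,y)$ is $\mc C^1$ with $\Vert h(x,\cdot)\Vert_\infty$ and $\Lip(h(x,\cdot))$ both $\le\Vert h\Vert_{\mc C^1}$, the definition of $\Vert\cdot\Vert_{W^1}$ yields $\big|\int_\T h(x,y)(\varphi_{z'}-\psi_{z'})(y)\,dy\big|\le\Vert h\Vert_{\mc C^1}\,\Vert\varphi_{z'}-\psi_{z'}\Vert_{W^1}$; integrating in $z'$ and bounding $|W|$ by $\Vert W\Vert_{L^\infty([0,1]^2)}$ gives $(\mathrm{II})\le|\alpha|\,\Vert h\Vert_{\mc C^1}\Vert W\Vert_{L^\infty([0,1]^2)}\,\Vert\varphi-\psi\Vert_{``1"}$, independently of $z$.

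Finally I would integrate $(\mathrm I)+(\mathrm{II})$ over $z\in[0,1]$: the $(\mathrm I)$-contribution becomes $\big(\sup_x|f'(x)|+|\alpha|\Vert h\Vert_{\mc C^1}\Vert W\Vert_{L^\infty}\big)\Vert\varphi-\psi\Vert_{``1"}$, and the (already $z$-independent) $(\mathrm{II})$-contribution adds a further $|\alpha|\Vert h\Vert_{\mc C^1}\Vert W\Vert_{L^\infty}\Vert\varphi-\psi\Vert_{``1"}$, producing exactly the stated constant $\sup_x|f'(x)|+2|\alpha|\Vert W\Vert_{L^\infty([0,1]^2)}\Vert h\Vert_{\mc C^1}$. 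That $\mc F$ is well defined on $\mc B_{\textit{S,M}}\cap\mc M_{1,\Leb}$ (in the relevant sense, possibly only after an iterate) is covered by Lemma~\ref{F-invariante} and the Remark after the definition of the STO, so nothing further is needed there. I expect the only genuinely delicate point to be the bookkeeping forced by self-consistency: one must keep $F_{\varphi,z}$ as a \emph{variable} map in $(\mathrm{II})$ and, after using $h(x,\cdot)$ as a rescaled admissible test function, recognise the remaining $z'$-integral as precisely $\Vert\varphi-\psi\Vert_{``1"}$ — this is where the second copy of $|\alpha|\Vert h\Vert_{\mc C^1}\Vert W\Vert_{L^\infty}$, hence the factor $2$, comes from.
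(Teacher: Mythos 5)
Your proof is correct and takes essentially the same approach as the paper. The paper also splits by the triangle inequality into a ``change of map'' piece and a ``change of density'' piece and bounds them in the same way (sup of $|F_{\varphi,z}-F_{\psi,z}|$ via $h(x,\cdot)$ as a rescaled test function, respectively $\sup|F'|$ as a Lipschitz constant for push-forward); the only difference is the choice of intermediate term — the paper goes through $(F_{\psi,z})_*\varphi_z$, you go through $(F_{\varphi,z})_*\psi_z$ — which is a cosmetic reordering.
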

\begin{proof}

\begin{align*}
\Vert \mathcal{F}\varphi - \mathcal{F}\psi \Vert_{``1"} &  =  \int_{[0,1]} \Vert (F_{\phi,z})_{*}\phi_{z}-(F_{\psi,z})_{*}\psi_{z} \Vert_{W^1} \, dz \nonumber \\
& \leq \underbrace{\int_{[0,1]} \Vert (F_{\phi,z})_{*}\phi_{z}-(F_{\psi,z})_{*}\phi_{z} \Vert_{W^1}\, dz}_{\mytag{$C_1$}{termC1}} + \underbrace{\int_{[0,1]} \Vert (F_{\psi,z})_{*}\varphi_{z}-(F_{\psi,z})_{*}\psi_{z} \ \Vert_{W^1} \, dz}_{\mytag{$C_2$}{termC2}}.
\end{align*}

First we are going to estimate (\ref{termC1}):
\begin{eqnarray}
   \Vert (F_{\phi,z})_{*}\phi_{z}-(F_{\psi,z})_{*}\phi_{z} \ \Vert_{W^1} & =& \sup_{
\substack{\Lip(g)\leq 1 \\ \Vert g \Vert_{\infty}\leq 1 } } \left[ \int_\T g\cdot (F_{\phi,z})_{*}\phi_{z} \, d \Leb-\int_\T g\cdot (F_{\psi,z})_{*}\phi_{z} \, d \Leb \right] \nonumber \\
         & =& \sup_{
\substack{\Lip(g)\leq 1 \\ \Vert g \Vert_{\infty}\leq 1 } } \left[ \int_\T (g\circ F_{\phi,z}) \cdot \phi_{z}\,d \Leb-\int_\T (g\circ F_{\psi,z}) \cdot \phi_{z}\,d \Leb \right] \nonumber  \\
		& \leq & \sup_{
\substack{\Lip(g)\leq 1 \\ \Vert g \Vert_{\infty}\leq 1 }} \int_\T \Lip(g)\cdot \vert F_{\phi,z} - F_{\psi,z}\vert \cdot \phi_{z}\,d \Leb  \nonumber  \\
		& \leq & \int_\T \vert F_{\phi,z} - F_{\psi,z}\vert \cdot \phi_{z} \, d \Leb. \nonumber 
\end{eqnarray}

\noindent
Moreover, since $h\in \mathcal{C}^{3}(\mathbb{T}\times\mathbb{T},\mathbb{R})$, for every $x\in\mathbb{T}$ 
\begin{equation*}
\int_{\mathbb{T}} h(x,y) (\phi_{z}(y)-\psi_{z}(y)) \, dy  \leq \Vert h \Vert_{\mc C^1} \Vert \phi_{z}-\psi_{z}\Vert_{W^1}.
\end{equation*}
Using the previous inequality and the Hölder inequality, we obtain
\begin{align*}
\left| F_{\phi,z}(x)- F_{\psi,z}(x)\right| & = \alpha \left| \int_{0}^{1} dz' W(z,z') \int_{\mathbb{T}} dy \ h(x,y)  (\phi_{z'}(y)-\psi_{z'}(y)) \right| \\
	& \leq \alpha \left| \int_{0}^{1} dz' W(z,z') \Vert h \Vert_{\mc C^1} \Vert \phi_{z'}-\psi_{z'}\Vert_{W^1} \right|  \\
	& \leq \alpha \, \Vert W(z,\cdot) \Vert_{L^{\infty}([0,1])} \Vert h \Vert_{\mc C ^1}\left| \int_{0}^{1} dz' \Vert \phi_{z'}-\psi_{z'}\Vert_{W^1} \right|  \\
	& = \alpha \,\Vert W(z,\cdot) \Vert_{L^{\infty}([0,1])} \Vert h \Vert_{\mc C^1}\Vert \phi - \psi \Vert_{``1"}. 
\end{align*}
Recalling that $\phi_{z}$ is a probability density,  we have
\begin{align*}
   \int_0^1 \Vert (F_{\phi,z})_{*}\phi_{z}(x)-(F_{\psi,z})_{*}\phi_{z}(x) \Vert_{W^1}\, dz & \leq  \alpha \, \Vert W \Vert_{L^{\infty}([0,1]^2)} \Vert h \Vert_{\mc C^1}\Vert \phi - \psi \Vert_{``1"}
\end{align*} 
where we use that $\int_{0}^{1} dz \Vert W(z,\cdot) \Vert_{L^{\infty}([0,1])}=\Vert W \Vert_{L^{1}([0,1],L^{\infty}([0,1]))}.$

For term (\ref{termC2}) we use  again the duality property of the operator
\begin{align*}
   \Vert (F_{\psi,z})_{*}\phi_{z}-(F_{\psi,z})_{*}\psi_{z} \ \Vert_{W^1} & = \sup_{
\substack{\Lip(g)\leq 1 \\ \Vert g \Vert_{\infty}\leq 1 } } \left[ \int (g\circ F_{\psi,z}) \cdot \phi_{z}d \Leb-\int (g\circ F_{\psi,z}) \cdot \psi_{z}d \Leb \right] \\
		& \leq \sup_{
\substack{\Lip(g)\leq 1 \\ \Vert g \Vert_{\infty}\leq 1 } } \int \Lip(g\circ F_{\psi,z})\cdot \vert \phi_{z}-\psi_{z}\vert \, d \Leb \nonumber  \\
		& \leq  \int \, \sup\vert F'_{\psi,z} \vert \, \vert \phi_{z}-\psi_{z}\vert \, d \Leb.
\end{align*}

Note that for all $x\in \mathbb{T}$ and $\psi\in \mc B_{\textit{S,M}}\cap\mc M_{1,\Leb}$ the derivative of  $F_{\psi,z}:\mathbb{T}\to\mathbb{T}$ is uniformly bounded (Lemma \ref{F-Expanding})
so,
\begin{align*}
   \Vert (F_{\psi,z})_{*}\phi_{z}-(F_{\psi,z})_{*}\psi_{z} \ \Vert_{W^1} & \leq (\kappa + \alpha\,\Vert h\Vert_{\mc C^1}\, \|W(z,\cdot)\|_{L^{1}([0,1])}) \, \Vert \phi_{z}-\psi_{z}\Vert_{W^1}
\end{align*}
Term (\ref{termC2}) is then bounded by
\begin{equation}\label{segundaestimative}
\int_{[0,1]} \Vert (F_{\psi,z})_{*}\phi_{z}-(F_{\psi,z})_{*}\psi_{z} \ \Vert_{W^1} \, dz \leq (\kappa + \alpha\,\Vert h\Vert_{\mc C^1}\, \|W\|_{L^{\infty}([0,1],L^{1}([0,1]))}) \, \Vert \phi-\psi\Vert_{``1"}
\end{equation}
Finally, if we put together all of the above, we obtain

\begin{equation*}
\Vert \mathcal{F}\varphi - \mathcal{F}\psi \Vert_{``1"} \leq (\kappa + 2\, \alpha \, \Vert h\Vert_{\mc C^1} \|W\|_{L^{\infty}([0,1]\times [0,1])}) \,\Vert \varphi_1 - \varphi_2 \Vert_{``1"}
\end{equation*}
\end{proof}

\subsubsection{Existence of a fixed point density for an iterate of $\mc F^n$}
\begin{proposition}\label{Prop:periodicOrbit}
Under the hypotheses of Lemma \ref{F-invariante}, and for an $n\in \N$ such that  $\mathcal{F}^n(\mc A_{\bo M})\subset \mc A_{\bo M}$, $\mc F^n$ has a fixed point $\varphi^{*}$ in the closure of $\mc A_{\bo M}$ in $\mc B_w$.
\end{proposition}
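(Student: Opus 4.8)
The plan is to obtain $\varphi^{*}$ as a fixed point of $\mc F^n$ by applying Schauder's fixed point theorem on the closure $\overline{\mc A}_{\bo M}$ of $\mc A_{\bo M}$ in $\mc B_w$. First I would assemble the three structural facts already proved: by Lemma~\ref{espacioConvexo} the set $\mc A_{\bo M}$ is convex, so its $\mc B_w$-closure $\overline{\mc A}_{\bo M}$ is convex as well; by Lemma~\ref{RelativamenteCompacto} the set $\mc A_{\bo M}$ is relatively compact in $\mc B_w$, so $\overline{\mc A}_{\bo M}$ is a compact convex subset of the Banach space $\mc B_w$, and it is nonempty since it contains, e.g., the uniform density once $M,M_1,M_2$ are taken large enough as permitted by Lemma~\ref{Lem:UnifProponFiber} and Lemma~\ref{F-invariante}; and by the hypothesis of the proposition (which is the content of Lemma~\ref{F-invariante}) we have the invariance $\mc F^n(\mc A_{\bo M})\subset \mc A_{\bo M}$.

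Next I would check that $\mc F^n$ extends to a continuous self-map of $\overline{\mc A}_{\bo M}$. By Lemma~\ref{cont-STO}, $\mc F$ is $\|\cdot\|_{``1"}$-Lipschitz on $\mc B_{\textit{S,M}}\cap \mc M_{1,\Leb}$ and leaves this set invariant; since $\mc A_{\bo M}\subset \mc B_{\textit{S,M}}\cap \mc M_{1,\Leb}$, the iterate $\mc F^n$ is $\|\cdot\|_{``1"}$-Lipschitz, hence uniformly continuous, on $\mc A_{\bo M}$, and therefore extends uniquely to a continuous map on $\overline{\mc A}_{\bo M}$; because $\mc F^n(\mc A_{\bo M})\subset \mc A_{\bo M}$, continuity propagates this inclusion to $\mc F^n(\overline{\mc A}_{\bo M})\subset \overline{\mc A}_{\bo M}$. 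I would also record that every element of $\overline{\mc A}_{\bo M}$ is still the disintegration of a probability measure with Lebesgue marginal: convergence in $\|\cdot\|_{``1"}$ gives, along a subsequence, weak convergence of the conditional measures $(\varphi_m)_z\to\varphi_z$ for a.e.\ $z$, and weak limits of probability measures on $\T$ are again probability measures, so $\mc F$ is genuinely defined on $\overline{\mc A}_{\bo M}$ and coincides there with the continuous extension above; no ambiguity arises in iterating $\mc F$.

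Finally I would invoke Schauder's theorem: a continuous map of a nonempty compact convex subset of a Banach space into itself has a fixed point; applied to $\mc F^n\colon\overline{\mc A}_{\bo M}\to\overline{\mc A}_{\bo M}$ this yields $\varphi^{*}\in\overline{\mc A}_{\bo M}$ with $\mc F^n\varphi^{*}=\varphi^{*}$, which is the claim. The main obstacle is not the abstract fixed point step but the consistency bookkeeping that legitimises it: one must ensure that the weak-norm Lipschitz estimate of Lemma~\ref{cont-STO} survives composition on $\mc A_{\bo M}$ (so that $\mc F^n$ is uniformly continuous there and admits a continuous extension to the closure), and that passing to the $\mc B_w$-closure does not leave the natural domain of $\mc F$ --- i.e.\ that weak limits of the relevant disintegrations remain probability disintegrations with Lebesgue marginal --- so that the fixed point produced by Schauder is an honest fixed point of the STO.
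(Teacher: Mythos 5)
Your argument follows the paper's proof essentially step by step: convexity (Lemma~\ref{espacioConvexo}) and relative compactness (Lemma~\ref{RelativamenteCompacto}) give a compact convex $\overline{\mc A}_{\bo M}$, the Lipschitz continuity of $\mc F$ in the weak norm (Lemma~\ref{cont-STO}) together with the invariance $\mc F^n(\mc A_{\bo M})\subset\mc A_{\bo M}$ yields a continuous self-map of the closure via the uniform-continuity extension theorem, and Schauder delivers the fixed point. Your added check that weak limits of disintegrations remain Lebesgue-marginal probability measures is a sensible clarification the paper leaves implicit, but it does not change the route.
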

\begin{proof}
The structure of the proof is as follows. In Lemma~\ref{F-invariante}, we showed that the set $\mc A_{\bo M}$ is invariant under the action of $\mathcal{F}^n$ for suitable parameters $\bo M$ and for $n\in\N$ sufficiently large. Since $\mc A_{\bold M} \subset \mathcal{B}_{\textit{S,M}} \cap \mc M_{1,\Leb}$, and $\mathcal{F}$ is Lipschitz continuous on $\mathcal{B}_{\textit{S,M}}$ with respect to the weak norm $\|\cdot\|_{``1"}$ (see Lemma~\ref{cont-STO}), we conclude that the restriction $\mathcal{F}^n|_{\mc A_{\bo M}}: \mc A_{\bo M} \to \mc A_{\bo M}$ is Lipschitz with respect to the distance induced by the norm $\|\cdot\|_{``1"}$.

Next, in Theorem~\ref{RelativamenteCompacto}, we proved that $\mc A_{\bo M}$ is relatively compact in the weak space $\mathcal{B}^{}_{w}$, and in Theorem~\ref{espacioConvexo} we established that $\mc A_{\bo M}$ is convex. Consequently, its closure $\overline{\mc A}_{M}$ in $\mathcal{B}^{}_{w}$ is both compact and convex.

As previously noted, the map $\mathcal{F}|_{\mc A_{\bo M}}$ is Lipschitz and hence uniformly continuous with respect to the weak norm. By the classical extension theorem for uniformly continuous functions (See e.g., \cite{rudin1964principles}, Theorem 4.19), there exists a unique continuous extension $\overline{\mathcal{F}}: \overline{\mc A}_{M} \to \mathcal{B}^{}_{w}$ satisfying $\overline{\mathcal{F}}|_{\mc A_{\bo M}} = \mathcal{F}$. Moreover, since $\mathcal{F}^n(\mc A_{\bo M}) \subset \mc A_{\bo M}$, continuity ensures that $\overline{\mathcal{F}}^n(\overline{\mc A}_{M}) \subset \overline{\mc A}_{M}$. Therefore, Schauder’s Fixed Point Theorem guarantees the existence of a fixed point $\varphi^\star \in \overline{\mc A}_{M}$. 
\end{proof}

\begin{corollary}\label{Cor:C2normFixed}
For a.e. $z\in[0,1]$, $\phi^\star_{z}\in \mc C^2(\T,\R)$ and $\mathop{\mathrm{ess~sup}}\limits_{z\in [0,1]}\|\phi^\star_z\|_{\mc C^2}<\infty$.
\end{corollary}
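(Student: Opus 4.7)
The plan is to exploit that $\phi^\star$ is a fixed point of $\mc F^n$ (Proposition \ref{Prop:periodicOrbit}), which gives a fiberwise invariance equation for the transfer operator of a single $\mc C^3$ uniformly expanding circle map, and then invoke classical regularity theory for such maps.

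First, I would show that the fiberwise $BV^1 \cap BV^2$ bounds transfer to the closure $\overline{\mc A}_{\bo M}$ in $\mc B_w$. Since $\phi^\star$ is a limit in $\Vert\cdot\Vert_{``1"}$ of a sequence $\phi_k \in \mc A_{\bo M}$, extracting a subsequence yields $(\phi_k)_z \to \phi^\star_z$ in $W^1(\T)$ for a.e.\ $z\in[0,1]$. The seminorms $|\cdot|_{BV^1}$ and $|\cdot|_{BV^2}$ are lower semicontinuous under weak convergence of measures (they are suprema of duality pairings against $\mc C^1$, respectively $\mc C^2$, test functions of sup-norm at most one), so applying Fatou's lemma pointwise in $z$ gives $\phi^\star_z \in \mc B_{BV^1,M_1}\cap\mc B_{BV^2,M_2}$ for a.e.\ $z$. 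In particular, $\phi^\star_z$ is a probability density on $\T$.

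Next, define
\[
F^{(n)}_z := F_{\mc F^{n-1}\phi^\star,z}\circ\cdots\circ F_{\mc F\phi^\star,z}\circ F_{\phi^\star,z}.
\]
By Lemma \ref{F-Expanding} and Definition \ref{Def:xi}, each factor is $\mc C^3$ with norm at most $K$ and minimal expansion at least $\xi>1$, uniformly across the iterate index and for a.e.\ $z$. Consequently $F^{(n)}_z$ is itself a $\mc C^3$ uniformly expanding circle map whose $\mc C^3$-norm, minimal expansion ($\ge \xi^n$) and distortion are bounded by constants depending only on $K,\xi$ and $n$, and therefore uniformly in $z$. The disintegration identity for iterates of $\mc F$ combined with the fixed-point equation $\phi^\star = \mc F^n \phi^\star$ yields $(F^{(n)}_z)_*\phi^\star_z = \phi^\star_z$ for a.e.\ $z$.

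Finally, I would invoke the classical regularity theory for $\mc C^3$ uniformly expanding circle maps: such a map admits a unique absolutely continuous invariant probability measure whose density is of class $\mc C^2$, with $\mc C^2$-norm controlled solely in terms of the $\mc C^3$-norm, minimal expansion and distortion of the map (this follows e.g.\ from a Lasota--Yorke inequality in $\mc C^2$ combined with the spectral gap of the transfer operator). Applying this to $G = F^{(n)}_z$ and using the uniform control obtained in the previous step, we deduce that $\phi^\star_z\in \mc C^2(\T,\R)$ for a.e.\ $z$ with $\|\phi^\star_z\|_{\mc C^2}$ essentially bounded in $z$. The main technical point in this argument is the first step: passing the fiberwise $BV$-bounds from $\mc A_{\bo M}$ to its weak closure, which requires combining lower semicontinuity of the $|\cdot|_{BV^i}$ seminorms with Fatou's lemma to handle the essential supremum in $z$; the rest is a direct appeal to well-established results for single uniformly expanding maps.
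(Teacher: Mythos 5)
Your proof follows the same core route as the paper: view $\phi^\star_z$ as the invariant density of the $n$-fold composition $F^{(n)}_z = F_{\mc F^{n-1}\phi^\star,z}\circ\cdots\circ F_{\phi^\star,z}$, observe (via Lemma \ref{F-Expanding}) that this is a $\mc C^3$ uniformly expanding circle map with bounds uniform in $z$, and invoke classical regularity of the acip for such maps (the paper cites \cite{krzyzewski1977some}). Where you add value is the preliminary step: because Proposition \ref{Prop:periodicOrbit} only places $\phi^\star$ in the \emph{closure} of $\mc A_{\bo M}$ in the weak norm $\Vert\cdot\Vert_{``1"}$, it is not a priori clear that $\phi^\star_z$ is even an absolutely continuous probability density for a.e. $z$ — yet this is needed to identify $\phi^\star_z$ with \emph{the} acip of $F^{(n)}_z$ rather than some other $(F^{(n)}_z)_*$-invariant probability measure (e.g.\ one supported on a periodic orbit). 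The paper's proof silently assumes this, while you address it by passing to a subsequence with $(\phi_k)_z\to\phi^\star_z$ in $W^1$ for a.e. $z$, and using lower semicontinuity of the $BV^1$ and $BV^2$ seminorms under weak convergence to conclude $\phi^\star_z\in\mc B_{BV^1,M_1}\cap\mc B_{BV^2,M_2}$, hence absolutely continuous. This is a genuine refinement; the only cosmetic caveat is that invoking ``Fatou's lemma pointwise in $z$'' is a slight misnomer — what you actually use is a.e.\ convergence along a subsequence combined with lower semicontinuity of the seminorms along each fiber, with no integration in $z$ involved.
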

\begin{proof}
Calling $\phi_i^\star:=\mc F^i\phi^\star$ for $1\le i\le n-1$, one has that by definition of $\mc F$
\[
\phi^\star_z=\mc F^n\phi^\star=(F_{\phi_{n-1}^\star,z})_*(F_{\phi^\star_{n-2},z})_*...(F_{\phi^\star,z})_*\phi^\star_z.
\]
As a consequence of Lemma \ref{F-Expanding}, $\phi^\star_z$ is the fixed point of a uniformly expanding map with bounded $\mc C^3$ norm and it s known that this implies $\phi^\star_z\in \mc C^2$ (see e.g., \cite{krzyzewski1977some}, Theorem 1). Since $F_{\phi_{n-1}^\star,z}\circ F_{\phi^\star_{n-2},z}\circ...\circ F_{\phi^\star,z}$ is a uniformly expanding map with bounded $\mc C^3$ norm uniformly for a.e. $z$, the bound on the $\mc C^2$ norm of $\phi^\star_z$ is uniform for a.e. $z$, i.e.  $\mathop{\mathrm{ess~sup}}\limits_{z\in [0,1]}\|\phi^\star_z\|_{\mc C^2}<\infty$.
\end{proof}

\subsubsection{Existence, stability, and uniqueness of a fixed point for $\mc F$}

Notice that Proposition \ref{Prop:periodicOrbit} shows that $\mc F$ has a periodic orbit  $\{\phi_*,\mc F\phi_*,...,\mc F^{n-1}\phi_*\}\subset \mc A_{\bo M}$ of some period $n\ge 1$. We are going to prove now that $\mc F$ is a strict contraction with respect to a suitable metric and that $\phi_*$ must be a fixed point. Let's highlight that the metric with respect to which $\mc F$ is a contraction is not complete, therefore one cannot deduce existence of the fixed point from Banach fixed point theorem.

\smallskip
Let $\mc V_a$ be the cone of $a$-$\log$-Lipschitz functions on $\T$ taking real values \footnote{ $\mathcal{V}_{a}:= \left\{\psi \in \mathcal{C}(\mathbb{T},\mathbb{R}): \frac{\psi(x)}{\psi(y)}\le e^{a|x-y|} \right\}$.}, and define 
\[
\tilde{\mc V}_a:=\left\{\nu \in \mc M_{\Leb}([0,1]\times\T):\,\frac{d\nu_z}{dLeb}\in\mc V_a \mbox{ for a.e. }z\right\}.
\]
Denote by $\theta_a:\mc V_a\times\mc V_a\rightarrow \R\cup\{\infty\} $ the projective Hilbert metric on $\mc V_a$. For $\nu,\,\nu' \in \tilde{\mc V}_{a}$, define the (projective) metric
\[
\tilde \theta_a(\nu,\nu'):=\mathop{\mathrm{ess~sup}}\limits_{z\in [0,1]}\,  \theta_a\left(\frac{d\nu_z}{dLeb},\frac{d\nu'_z}{dLeb}\right).
\]

\begin{proposition}\label{Prop:Contraction}
There are $\alpha_1>0$  sufficiently small and $\eta\in[0,1)$, such that for all $|\alpha|<\alpha_1$  there is $a>0$ such that $\mathcal F$ sends $\tilde{\mc V}_a$ into $\tilde{\mc V}_{\eta a}$. Furthermore, suppose that 
$\phi,\,\psi\in \mc M_{1,\Leb}\cap \tilde{\mc V}_a$  and for almost every $z\in[0,1]$, $\psi_{z} \in \mathcal{C}^{2}(\mathbb{T})$ with $\mathop{\mathrm{ess~sup}}_{z\in[0,1]}\|\psi_{z}\|_{\mc C^2}<\infty$. Then, if $\alpha$ is sufficiently small there is $\gamma\in[0,1)$ such that 
\[
\tilde \theta_a(\mc F \phi, \mc F\psi)\le \gamma \tilde \theta_a(\phi,\psi).
\]
\end{proposition}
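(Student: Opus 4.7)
The plan is to combine a fiberwise Birkhoff contraction argument with a perturbation estimate comparing the fiber dynamics $F_{\phi,z}$ and $F_{\psi,z}$. I would prove the cone-invariance and the contraction separately, both by working fiber by fiber in $z\in[0,1]$ and then taking the essential supremum.

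\paragraph{Invariance of the cone.} For $\phi\in\tilde{\mc V}_a$ and a.e.\ $z$, one has $(\mc F\phi)_z=L^\phi_z\phi_z$ with $L^\phi_z:=(F_{\phi,z})_*$. Lemma \ref{F-Expanding} provides uniform (in $\phi$ and a.e.\ $z$) bounds on the expansion $\xi=\xi(\alpha)>1$ and on the $\mc C^2$ norm of $F_{\phi,z}$, and in particular a uniform log-distortion constant $K_1$ for $\log|F'_{\phi,z}|$. A standard inverse-branches computation then yields
\[
L^\phi_z(\mc V_a)\subset \mc V_{(a+K_1)/\xi(\alpha)}.
\]
For $|\alpha|<\alpha_1$ small one has $\xi(\alpha)\ge \xi_0>1$ uniformly; I would fix any $\eta\in(\xi_0^{-1},1)$ once and for all, and then set $a=a(\alpha):=K_1(\alpha)/(\eta\xi(\alpha)-1)$, so that $(a+K_1)/\xi(\alpha)\le \eta a$ and $\mc F(\tilde{\mc V}_a)\subset\tilde{\mc V}_{\eta a}$ follows.

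\paragraph{Contraction: splitting via the triangle inequality.} For a.e.\ $z$, the triangle inequality for $\theta_a$ gives
\[
\theta_a((\mc F\phi)_z,(\mc F\psi)_z)\le \theta_a(L^\phi_z\phi_z,L^\phi_z\psi_z)+\theta_a(L^\phi_z\psi_z,L^\psi_z\psi_z).
\]
The first summand is controlled by the classical Birkhoff contraction theorem: since $L^\phi_z$ maps $\mc V_a$ into the subcone $\mc V_{\eta a}$ of finite $\theta_a$-diameter $\Delta$ (finiteness follows from $\eta<1$), one has
\[
\theta_a(L^\phi_z\phi_z,L^\phi_z\psi_z)\le \tilde\gamma\,\theta_a(\phi_z,\psi_z),\qquad \tilde\gamma:=\tanh(\Delta/4)<1,
\]
which upon taking the essential supremum in $z$ gives $\tilde\gamma\,\tilde\theta_a(\phi,\psi)$.

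\paragraph{Main obstacle: the second summand.} The hard step is to show $\theta_a(L^\phi_z\psi_z,L^\psi_z\psi_z)\le C''|\alpha|\,\tilde\theta_a(\phi,\psi)$. My plan has four ingredients. First, an argument analogous to Proposition \ref{kthDerivative}, but with the two fiber maps computed at the same $z$ and different measures, gives
\[
\|F_{\phi,z}-F_{\psi,z}\|_{\mc C^k}\le |\alpha|\,\|h\|_{\mc C^k}\,\|W\|_{L^\infty(L^1)}\,\mathop{\mathrm{ess~sup}}_{z'}\|\phi_{z'}-\psi_{z'}\|_{L^1}
\]
for $k=0,1,2$. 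Second, the standard bound $\|\phi_{z'}-\psi_{z'}\|_{L^1}\le C(e^{\theta_a(\phi_{z'},\psi_{z'})}-1)$ valid for normalized log-Lipschitz densities (which on any $\theta_a$-bounded set, in particular after one iteration since the images lie in $\tilde{\mc V}_{\eta a}$ of finite diameter, becomes $\le C'\theta_a(\phi_{z'},\psi_{z'})$) turns this into $\|F_{\phi,z}-F_{\psi,z}\|_{\mc C^2}\le C|\alpha|\,\tilde\theta_a(\phi,\psi)$. Third, using that $\psi_z\in\mc C^2$ uniformly in $z$ and that $F_{\phi,z},F_{\psi,z}$ have uniform $\mc C^3$ norm and uniform expansion, a direct manipulation of the explicit formula for the transfer operator along inverse branches gives that $L^\phi_z\psi_z$ and $L^\psi_z\psi_z$ are uniformly $\mc C^2$, uniformly bounded below by a positive constant, and satisfy $\|L^\phi_z\psi_z-L^\psi_z\psi_z\|_{\mc C^0}\le C|\alpha|\,\tilde\theta_a(\phi,\psi)$. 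Fourth, I would invoke the standard fact that $\theta_a$ is locally Lipschitz-equivalent to the $\mc C^0$ topology on bounded, uniformly positive subsets of the interior of $\mc V_a$, to conclude $\theta_a(L^\phi_z\psi_z,L^\psi_z\psi_z)\le C''|\alpha|\,\tilde\theta_a(\phi,\psi)$. Combining the two summands and taking the essential supremum in $z$,
\[
\tilde\theta_a(\mc F\phi,\mc F\psi)\le (\tilde\gamma+C''|\alpha|)\,\tilde\theta_a(\phi,\psi),
\]
and $\gamma:=\tilde\gamma+C''|\alpha|<1$ provided $\alpha_1$ is taken small enough. The $\mc C^2$ regularity of $\psi$ is precisely what makes the fourth step go through, by guaranteeing a uniform positive lower bound on $L^\psi_z\psi_z$.
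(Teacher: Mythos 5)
Your architecture matches the paper's: split $\theta_a((\mc F\phi)_z,(\mc F\psi)_z)$ by the triangle inequality into a Birkhoff-contraction term and a perturbation term, take the essential supremum in $z$. The cone-invariance argument and the treatment of the first summand by the Birkhoff theorem are correct and essentially identical to the paper's.

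The gap is in your fourth step, the claim that "$\theta_a$ is locally Lipschitz-equivalent to the $\mc C^0$ topology on bounded, uniformly positive subsets of the interior of $\mc V_a$." That statement is false. Take $\varphi\equiv1$ and $\psi_n(x)=1+\delta_n\cos(2\pi n x)$ with $\delta_n:=\eta a/(2\pi n)$; then $\psi_n\in\mc V_{\eta' a}$ with $\eta'\to\eta<1$, both $\varphi$ and $\psi_n$ are uniformly bounded above and below, and $\|\varphi-\psi_n\|_{\mc C^0}=\delta_n\to0$. Yet a direct computation gives $\alpha(\varphi,\psi_n)=1-\sqrt{\eta^2+\delta_n^2}$ and $\beta(\varphi,\psi_n)=1+\sqrt{\eta^2+\delta_n^2}$, so $\theta_a(\varphi,\psi_n)\to\log\frac{1+\eta}{1-\eta}>0$. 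Thus $\mc C^0$-smallness does not imply Hilbert-distance smallness, even strictly inside the cone and with positivity bounds. The uniform $\mc C^2$ control you establish in step 3 excludes this particular sequence, but it does not rescue the argument as stated: interpolating $\mc C^0$ closeness of order $|\alpha|\tilde\theta_a$ against a fixed $\mc C^2$ bound only yields $\mc C^1$ closeness of order $\sqrt{|\alpha|\tilde\theta_a}$, which breaks the \emph{linear} estimate $\theta_a(L^\phi_z\psi_z,L^\psi_z\psi_z)\le C|\alpha|\tilde\theta_a(\phi,\psi)$ that the final combination needs.

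What is actually needed is a comparison that uses the $\mc C^2$ regularity of $\psi_z$ and the $\mc C^2$-proximity of the two fiber maps \emph{directly}, i.e. an estimate of the form $\theta_a\bigl((F_{\phi,z})_*\psi_z,(F_{\psi,z})_*\psi_z\bigr)\le C\,\|F_{\phi,z}-F_{\psi,z}\|_{\mc C^2}$, with $C$ depending on $\|\psi_z\|_{\mc C^2}$, $\|F_{\cdot,z}\|_{\mc C^2}$ and the expansion lower bound. This is precisely what the paper invokes at this point (Proposition~A.7 of \cite{Tanzi2022}); that is also why the statement requires $\psi_z\in\mc C^2$ uniformly. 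Your steps 1--2 (bounding $\|F_{\phi,z}-F_{\psi,z}\|_{\mc C^2}$ by $O(\alpha)\,\mathop{\mathrm{ess~sup}}_{z'}\theta_a(\phi_{z'},\psi_{z'})$, in the spirit of Lemmas~\ref{VariationOfDensity} and~\ref{C0ANDhm}) are fine; replacing your steps 3--4 with the $\mc C^2$-to-Hilbert lemma would close the gap.
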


\begin{proof}
Pick $\alpha$ small enough so that  $F_{\nu,z}$ is a map with minimal expansion bounded by $|F_{\nu,z}'|\ge \sigma>1$ and bounded distortion $K>0$ uniform in $\nu$ and $z$ (applying Lemma \ref{F-Expanding}).  All the  maps $\{F_{\nu,z}\}_{\nu,z}$, map $\mc V_a$ to $\mc V_{\eta a}$ for some $a>0$ and $\lambda\in(0,1)$ depending on $\sigma$ and $K$, and since $(\mc F\nu)_z=(F_{\nu,z})_*\nu_z$ the first claim follows.

Let $\psi,\,\phi\in\mc M_{1,\Leb}([0,1]\times \T)$ be absolutely continuous. Fix a $z\in[0,1]$ and suppose $\varphi_{z}, \psi_{z}\in \mathcal{V}_{a}$. We want to estimate the distance in the Hilbert projective metric of the action of the STO restricted to the fixed fiber $\{z\}\times\mathbb{T}$ on these two densities: for a.e. $z\in[0,1]$
\begin{align*}
	\theta_{a}((\mathcal{F}\varphi)_{z},(\mathcal{F}\psi)_{z}) & = \theta_{a}((F_{\varphi,z})_{*}\varphi_{z},(F_{\psi,z})_{*}\psi_{z}) \\
				  & \leq \theta_{a}((F_{\varphi,z})_{*}\varphi_{z},(F_{\varphi,z})_{*}\psi_{z})+\theta_{a}((F_{\varphi,z})_{*}\psi_{z},(F_{\psi,z})_{*}\psi_{z})\\
				  & \leq \lambda \theta_{a}(\varphi_{z},\psi_{z})+C\|F_{\varphi,z}-F_{\psi,z}\|_{\mc C^2}
\end{align*}
where to obtain the last inequality we used that $(F_{\varphi,z})_{*}:({\mc V}_a,\theta_a)\rightarrow (\mc V_{\eta a},\theta_a)$ is a  contraction, with contraction rate $\lambda\in[0,1)$ depending only on $\eta$.  For the remaining term, we apply Proposition A.7 from \cite{Tanzi2022}, which provides a comparison between the Hilbert metric and the $\mathcal{C}^{2}$-metric where $C$ is a constant that depends on upper bounds of $\Vert F_{\varphi,z} \Vert$ and $\Vert F_{\psi,z} \Vert$, on lower bounds for $\inf \vert F'_{\varphi,z} \vert$ and $\inf \vert F'_{\psi,z} \vert$, and on $\Vert \psi_z \Vert_{\mc C^2}$; therefore $C$  can be chosen uniformly in $\phi$, $z$, and $\psi$ once $\|\psi_z\|_{\mc C^2}$ is uniformly bounded. This estimate requires $\psi_z \in \mathcal{C}^{2}(\mathbb{T})$ to control the distortion.

In addition, using Lemma \ref{VariationOfDensity} and Lemma \ref{C0ANDhm} we obtain

\begin{equation*}\label{relationC2HILBERT}
\|F_{\varphi,z}-F_{\psi,z}\|_{\mc C^2}\leq C_{\#} \alpha \mathop{\mathrm{ess~sup}}\limits_{z'\in[0,1]} \theta_{a}(\varphi_{z'},\psi_{z'}).
\end{equation*}
Using this last inequality, we have that
\begin{equation*}
\theta_{a}((\mathcal{F}\varphi)_{z},(\mathcal{F}\psi)_{z}) \leq \lambda \, \theta_{a}(\varphi_{z},\psi_{z}) +C_{\#} \alpha \mathop{\mathrm{ess~sup}}\limits_{z'\in[0,1]} \theta_{a}(\varphi_{z'},\psi_{z'})
\end{equation*}
Taking the essential supremum of both sides we get the inequality
\begin{equation*}
\mathop{\mathrm{ess~sup}}\limits_{z\in [0,1]}\theta_{a}((\mathcal{F}\varphi)_{z},(\mathcal{F}\psi)_{z}) \leq (\lambda+C_{\#}\alpha)  \mathop{\mathrm{ess~sup}}\limits_{z\in[0,1]} \theta_{a}(\varphi_{z},\psi_{z}).
\end{equation*}
So, there exists $\alpha_{\star}>0$ such that for all
weak coupling $\alpha<\alpha_{\star}$ where $\alpha_{\star}=C_{\#}^{-1}(1-\lambda)$ we have that  $\gamma=\lambda+C_{\#}\alpha<1.$ So, we obtain 
\begin{equation*}
\mathop{\mathrm{ess~sup}}\limits_{z\in [0,1]}\theta_{a}((\mathcal{F}\varphi)_{z},(\mathcal{F}\psi)_{z}) \leq \gamma  \mathop{\mathrm{ess~sup}}\limits_{z\in [0,1]} \theta_{a}(\varphi_{z},\psi_{z}).
\end{equation*}
\end{proof}

\begin{proposition}\label{Pro:fixedPointCone}
Under the assumptions of Lemma \ref{F-Expanding}, suppose that for $n\in \N$, $\mc F^n$ has a fixed point $\nu^\star\in\mc M_{1,\Leb}([0,1]\times\T)$ with $\nu^\star_z$ absolutely continuous for almost every $z\in[0,1]$. Then there is $a>0$ such that $\nu\in \tilde{\mc V}_a$.
\end{proposition}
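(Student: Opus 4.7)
The plan is to invoke the classical Perron--Frobenius theory for uniformly expanding maps, applied to the composed fibre dynamics at the fixed point. Setting $\phi^\star_j := \mathcal F^j \nu^\star$ for $j = 0,1,\dots,n-1$ (with $\phi^\star_0 = \nu^\star$), the identity $\mathcal F^n \nu^\star = \nu^\star$ disintegrates on almost every fibre to
\[
\nu^\star_z \;=\; (G_z)_* \nu^\star_z, \qquad G_z \,:=\, F_{\phi^\star_{n-1},z} \circ F_{\phi^\star_{n-2},z} \circ \cdots \circ F_{\phi^\star_0,z},
\]
so that $\nu^\star_z$ is an absolutely continuous invariant density of the circle map $G_z$ for a.e.\ $z \in [0,1]$.

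First I would use Lemma \ref{F-Expanding} to control each factor $F_{\phi^\star_j,z}$: under the coupling assumption each such map has expansion $\geq \xi > 1$, bounded $\mathcal C^3$ norm, and bounded distortion, with all bounds uniform in $j$ and (a.e.) $z$. Composing $n$ such maps shows that $G_z$ is $\mathcal C^3$ and uniformly expanding (with rate at least $\xi^n$) with a uniform distortion constant $\hat K$ depending only on $f,h,\alpha,W,n$. The key point here is that none of these bounds depend on $z$.

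Next I would invoke the standard cone invariance argument for $(G_z)_*$: by the bounded distortion estimate applied to the Perron--Frobenius formula $(G_z)_*\psi(x) = \sum_{w \in G_z^{-1}(x)} \psi(w)/|G_z'(w)|$, one obtains $(G_z)_*(\mc V_a) \subset \mc V_{\lambda a + C}$ for some $\lambda \in (0,1)$ (proportional to $\xi^{-n}$) and $C > 0$ (depending on $\hat K$), both independent of $z$. Choosing $a > C/(1-\lambda)$ makes $\mc V_a$ invariant under $(G_z)_*$ for every such $z$.

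Finally, starting from $\psi_0 \equiv 1 \in \mc V_a$, the iterates $\psi_k := (G_z^k)_*\, 1$ remain in $\mc V_a$ for all $k$; by the classical spectral gap for $\mathcal C^3$ uniformly expanding maps they converge uniformly on $\T$ to the unique absolutely continuous invariant density, which by uniqueness equals $\nu^\star_z$. Since the log-Lipschitz inequality defining $\mc V_a$ passes to uniform limits of positive continuous functions (and $\nu^\star_z$ is positive and continuous by Corollary \ref{Cor:C2normFixed}), we conclude $\nu^\star_z \in \mc V_a$ for a.e.\ $z$, i.e.\ $\nu^\star \in \tilde{\mc V}_a$. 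The only real obstacle is bookkeeping: ensuring that the cone parameter $a$ can be chosen independently of $z$, which is exactly what the uniform-in-$z$ expansion and distortion bounds of Lemma \ref{F-Expanding} deliver.
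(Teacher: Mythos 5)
Your proposal is correct and follows essentially the same route as the paper: disintegrate the fixed-point identity to show $\nu^\star_z$ is the (unique) absolutely continuous invariant density of the composed uniformly expanding fibre map, then use the $z$-uniform expansion and distortion bounds from Lemma~\ref{F-Expanding} to place it in $\mc V_a$ with $a$ independent of $z$. The paper simply cites the cone membership as ``well known,'' while you flesh it out with the cone-invariance and spectral-gap limiting argument; same skeleton, more detail.
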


\begin{proof}
For a.e. $z\in[0,1]$, call $\phi^\star_z:=\frac{d\nu_z^\star}{d\Leb}$ and $\phi^\star$ the density of $\nu^\star$. Call $\phi_i^\star:=\mc F^i\phi^\star$ for $1\le i\le n-1$. Then, for a.e. $z$ the density $\phi^\star_z$ is a fixed point of the uniformly expanding map $F_{\phi_{n-1}^\star,z}\circ...\circ F_{\phi_1^\star,z}\circ F_{\phi^\star,z}$:
\[
\phi_z^\star=(\mc F^n\phi^\star)_z=(F_{\phi^\star_{n-1},z})_*...(F_{\phi^\star,z})_*\phi_z^\star.
\]
Therefore, it is well known that $\phi_z^\star$ belongs to $\mc V_a$ for some $a$ depending on the minimal expansion and distortion of  $F_{\phi^\star,z}$ (also compare with Corollary \ref{Cor:SmoothnessFixedPoint}). Since for $z$ in a set of full measure, the minimum expansion and the distortion of  $F_{\phi^\star,z}$ are respectively uniformly lower bounded and upper bounded, $a$ can be chosen uniformly on (a.e.) $z$.
\end{proof}

Finally we can combine all of the above lemmas and propositions to prove our main result.

\begin{proof}[Proof of Theorem \ref{main}]
By Proposition \ref{Pro:fixedPointCone} there exists $a>0$ such that the fixed point $\phi^{*}$ of $\mc F^n$ is in $\tilde{\mathcal{V}}_{a}$, and therefore $\mc F^i\phi^\star\in \tilde{\mc V}_a$ for every $1\le i\le n-1$. Furthermore, by Corollary \ref{Cor:C2normFixed},  $\mathop{\mathrm{ess~sup}}_{z\in[0,1]}\|(\mc F^i\phi^\star)_z\|_{\mc C^2}<\infty$ for every $0\le i\le n-1$ and we can apply Proposition \ref{Prop:Contraction} to obtain
\[
\tilde\theta_a\left(\phi^\star,\mc F\phi^\star\right)=\tilde\theta_a\left(\mc F^{n}\phi^\star,\mc F^{n+1}\phi^\star\right)\le \lambda^{n}\tilde\theta_a\left(\phi^\star,\mc F\phi^\star\right)
\]
which implies $\theta_a\left(\phi^\star,\mc F\phi^\star\right)=0$, and therefore $\phi^\star=\mc F\phi^\star$, proving that $\phi^\star$ is a fixed point for $\mc F$.

We now prove stability of $\phi^\star$. Given $\epsilon>0$, consider $\phi$  a small $\mathcal{C}^1$ perturbation of $\phi^{*}$, that is for a.e. $z\in [0,1]$ $\|\varphi^{*}_{z}- \varphi_{z}\Vert_{\mathcal{C}^1} \leq \delta$ with 
\[
\delta \le \min \left\{ \frac{1}{4ae^{a/2}+6e^{a}}\varepsilon,\frac{e^{-a/2}}{2} \right\}.
\]
By Lemma 4.6 in \cite{CastorriniMatteo} we have that $\varphi_{z}\in \mathcal{V}_{a+\varepsilon}$ for a.e. $z\in[0,1]$ and therefore $\phi\in \tilde{\mathcal{V}}_{a+\varepsilon}$. Recall that we can consider $a>0$ sufficiently large such that $\mc F$ sends $\tilde{\mathcal{V}}_{a}$ strictly inside itself; it is not hard to prove that also $\mathcal{F}\tilde{\mathcal{V}}_{a+\varepsilon} \subset \tilde{\mathcal{V}}_{a+\varepsilon}$ if $\epsilon>0$ is sufficiently small. 
Since $\varphi^\star_z$ belongs to $\mathcal{C}^{2}(\mathbb{T})$ for almost every $z \in [0,1]$ we can again apply Proposition \ref{Prop:Contraction} and obtain
\begin{align*}
	\tilde{\theta}_{a+\varepsilon}(\mathcal{F}\phi, \phi^{*}) & = \mathop{\mathrm{ess~sup}}\limits_{z\in [0,1]}\,  \theta_{a+\varepsilon}\left({(\mathcal{F}\phi)_{z}},\phi^{*}_z\right) \\
	& = \mathop{\mathrm{ess~sup}}\limits_{z\in[0,1]}\,  \theta_{a+\varepsilon}\left((\mathcal{F}\phi)_{z},(\mathcal{F}\phi^{*})_z\right)\\
	& \leq \gamma \, \mathop{\mathrm{ess~sup}}\limits_{z\in [0,1]}  \theta_{a+\varepsilon}\left(\phi_{z},\phi^{*}_z\right).
\end{align*}
and by iteration
\[
\tilde{\theta}_{a+\varepsilon}(\mathcal{F}^{n}\phi, \phi^{*})\leq \gamma^{n} \, \tilde{\theta}_{a+\varepsilon}\left(\phi,\phi^{*}\right).
\]
Applying Lemma \ref{C0ANDhm}  the result follows.

The uniqueness of the fixed density in the weak coupling regime follows analogously. Suppose that there are two fixed density denoted  by $\phi^\star,\,\hat{\varphi}\in \tilde{\mc V}_a$ for our Self Consistent Transfer Operator. Using the $\tilde{\theta}_{a}$ contraction we have 
$$\tilde{\theta}(\hat{\varphi},\varphi_{*})=\tilde{\theta}(\mathcal{F}\hat{\varphi},\mathcal{F}\varphi^{*})\leq \gamma \, \tilde{\theta}(\hat{\varphi},\varphi^{*})$$
As we know $0<\gamma<1$ so, $\tilde{\theta}(\hat{\varphi},\varphi_{*})=0$, by definition of $\tilde{\theta}_{a}$ implies that for a.e. $z\in [0,1]$ we have $\theta_{a}(\tilde{\varphi}_{z},\varphi^{*}_{z})=0$  and since $\tilde{\varphi}_{z}$ and $\varphi^{*}_{z}$ are both densities,  $\hat{\varphi}=\varphi^{*}$. 

\end{proof}

\subsection{Lipschitz Continuity of the STO: Theorem \ref{convergence_STO}}

\begin{proof}[Proof of Theorem \ref{convergence_STO}]
Consider
\begin{align*}
\Vert \, \tilde{\mc F}\tilde\phi - \Fvarphi \, \Vert_{``1"} & =  \int_0^1 \Vert (\tilde F_{\tilde\phi,z})_{*}\tilde\phi_{z}-(F_{\phi,z})_{*}\phi_z \ \Vert_{W^1} \ dz  \\
& \leq \underbrace{\int_0^1 \Vert (\tilde F_{\tilde\phi,z})_{*}\tilde\phi_{z}-(F_{\phi,z})_{*}\tilde \phi_z \Vert_{W^1}\ dz}_{\mytag{$D_{1}$}{termD1}} +\underbrace{\int_0^1 \Vert (F_{\phi,z})_{*}\tilde \phi_z-(F_{\phi,z})_{*}\phi_{z} \ \Vert_{W^1} \, dz}_{\mytag{$D_{2}$}{termD2}}
\end{align*}
First, we estimate (\ref{termD1}). For this notice that
\begin{align*}
   \Vert (\tilde F_{\tilde \phi,z})_{*}\tilde\phi_z-(F_{\phi,z})_{*}\tilde\phi_z \ \Vert_{W^1} & = \sup_{
\substack{\Lip(g)\leq 1 \\ \Vert g \Vert_{\infty}\leq 1 } } \left[\int_\T g\cdot (\tilde F_{\tilde \phi,z})_{*}\tilde\phi_z d \Leb-\int_\T g\cdot (F_{\phi,z})_{*}\tilde\phi_z \, d \Leb \right] \\
         & = \sup_{
\substack{\Lip(g)\leq 1 \\ \Vert g \Vert_{\infty}\leq 1 } } \left[ \int_\T (g\circ \tilde F_{\tilde \phi,z} ) \cdot \tilde\phi_zd \Leb-\int_\T (g\circ F_{\phi,z}) \cdot \tilde\phi_z \,d \Leb \right] \\
		& \leq \sup_{
\substack{\Lip(g)\leq 1 \\ \Vert g \Vert_{\infty}\leq 1 } } \int_\T \Lip(g)\vert \tilde F_{\tilde \phi,z} - F_{\phi,z}\vert \cdot \tilde\phi_z \, d \Leb  \\
		& \leq  \int_\T \vert\tilde F_{\tilde \phi,z} - F_{\phi,z}\vert \cdot \tilde\phi_z \,d \Leb. 
\end{align*}
In the integrand above we estimate
\[
\vert \tilde F_{\tilde \phi,z}(x) -  F_{\phi,z}(x) \vert \leq \vert \tilde F_{\tilde \phi,z}(x) - \tilde F_{\phi,z}(x) \vert + \vert \tilde F_{\phi,z}(x) - F_{\phi,z}(x) \vert
\]
the first term is bounded by
\begin{align*}
\vert \tilde F_{\tilde \phi,z}(x) - \tilde F_{\phi,z}(x) \vert & = \left| \alpha \int_{0}^{1}dz'\int_{\mathbb{T}} \, dy\, \tilde W (z,z') (\tilde \phi_{z'}(y)- \phi_{z'}(y)) \, h(x,y) \right| \\
& \leq  \alpha \int_{0}^{1} dz'\, \vert \tilde W (z,z')\vert \left| \int_{\mathbb{T}} dy\, (\tilde \phi_{z'}(y)- \phi_{z'}(y)) \, h(x,y) \right|\\
& \leq \alpha \int_{0}^{1} dz'\, \vert \tilde W (z,z')\vert \Vert \partial_{1}h(x,\cdot)\Vert_{\infty}\Vert \tilde \phi_{z'}- \phi_{z'} \Vert_{W^1}\\
& \leq \alpha \, \Vert h\Vert_{\mathcal{C}^1} \Vert \tilde{W}(z,\cdot)\Vert_{L^{\infty}([0,1])} \int_{0}^{1} dz'\, \Vert \tilde \phi_{z'}- \phi_{z'}\Vert_{W^1} \\
& \leq \alpha \, \Vert h\Vert_{\mathcal{C}^1} \Vert \tilde{W}(z,\cdot)\Vert_{L^{\infty}([0,1])} \, \Vert \tilde \phi - \phi \Vert_{``1"}
\end{align*}
and the second term is bounded by
\begin{align*}
	\vert \tilde F_{\phi,z}(x) - F_{\phi,z}(x) \vert & = \left| \alpha \int_{0}^{1}dz'\int_{\mathbb{T}} \, dy\, \phi_{z'}(y) (\tilde W (z,z') - W(z,z'))  \, h(x,y) \right| \\
	& \leq \alpha \,\int_{0}^{1} dz'\, \left| \int_{\mathbb{T}}  dy\, \phi_{z'}(y) \, h(x,y)\right| \vert \tilde W (z,z') - W(z,z')) \vert \\
	& \leq \alpha \,\Vert \tilde W (z,\cdot) - W(z,\cdot)\Vert_{L^{1}([0,1])} \Vert h\Vert_{\infty}\sup_{z'\in [0,1]}\Vert \varphi_{z'}\Vert_{L^1(\T)}\\
	& \leq \alpha \, \Vert h\Vert_{\infty} \Vert \tilde W (z,\cdot) - W(z,\cdot)\Vert_{L^{1}([0,1])}
\end{align*}

Thus, the term (\ref{termD1}) is bounded by
\begin{equation}\label{primeraestimativaConv}
\int_0^1\Vert (\tilde F_{\tilde \phi,z})_{*}\tilde\phi_z-(F_{\phi,z})_{*}\tilde\phi_z \Vert_{W^1}\ dz \leq \alpha \,  \Vert h \Vert_{\mc C^1} \left( \Vert \tilde W \Vert_{L^1([0,1],L^\infty([0,1]))} \Vert\tilde \phi- \phi \Vert_{``1"} + \Vert \tilde W  - W \Vert_{L^1([0,1]^2)}  \right)
\end{equation}
where we used that $\int_0^1\, dz \, \Vert \tilde W (z,\cdot) \Vert_{L^{\infty}([0,1])} = \Vert \tilde W \Vert_{L^1([0,1],L^\infty([0,1]))}$ and $\int_0^1\, dz \, \Vert\tilde W (z,\cdot) - W(z,\cdot) \Vert_{L^{1}([0,1])} = \Vert\tilde W  - W \Vert_{L^1([0,1]^2)}$.

Now, let's move on to the second term (\ref{termD2})

\begin{align*}
   \Vert (F_{\phi,z})_{*}\tilde\phi_z(x)-(F_{\phi,z})_{*}\phi_{z}(x) \ \Vert_{W^1} &  = \sup_{
\substack{\Lip(g)\leq 1 \\ \Vert g \Vert_{\infty}\leq 1 } } \left[ \int_\T g\cdot (F_{\phi,z})_{*}\tilde\phi_z \, d \Leb-\int_\T g\cdot (F_{\phi,z})_{*}\phi_{z} \, d \Leb \right]\\
         & = \sup_{
\substack{\Lip(g)\leq 1 \\ \Vert g \Vert_{\infty}\leq 1 } } \left[ \int_\T g\circ F_{\phi,z}  \cdot (\tilde\phi_z-\phi_z) d \Leb \right] \\
		& \leq  \sup_{
\substack{\Lip(g)\leq 1 \\ \Vert g \Vert_{\infty}\leq 1 } } \Lip(g\circ F_{\phi,z}) \, W^{1}( \tilde\phi_z,\phi_{z})  \\
		& \leq  \sup_{x\in\T}\vert F'_{\phi,z}(x) \vert \, W^{1}( \tilde\phi_z,\phi_{z}). 
\end{align*}
Recall that $\vert F'_{\phi,z} \vert$ is uniformly bounded in $\phi$ and $z$ (see Lemma \ref{F-Expanding}), so we have
\begin{align*}
\Vert (F_{\phi,z})_{*}\tilde\phi_z(x)-(F_{\phi,z})_{*}\phi_{z}(x) \ \Vert_{W^1} &  \leq (\kappa + \alpha\,\Vert h\Vert_{\mc C^1}\, \|W\|_{L^{\infty}([0,1],L^{1}([0,1]))}) \, \Vert \tilde\phi_z - \phi_{z}\Vert_{W^1}.
\end{align*}
Thus, term (\ref{termD2}) is bounded by
\begin{equation}\label{segundaestimativeConv}
\int_{[0,1]} \Vert (F_{\phi,z})_{*}\tilde\phi_z(x)-(F_{\phi,z})_{*}\phi_{z}(x) \ \Vert_{W^1} \, dz \leq (\kappa + \alpha\,\Vert h\Vert_{\mc C^1} \|W\|_{L^{\infty}([0,1],L^{1}([0,1]))})  \, \Vert\tilde \phi- \phi \, \Vert_{``1"}.
\end{equation}
Finally, combining using (\ref{primeraestimativaConv}) and (\ref{segundaestimativeConv}) we obtain

\begin{equation*}
\Vert \tilde{\mc F}\tilde \phi - \Fvarphi \, \Vert_{``1"}  \leq C \Vert\tilde \phi- \phi \ \Vert_{``1"} + \alpha \, \Vert h \Vert_{\mc C^1} \Vert \tilde W  - W \Vert_{L^1([0,1]^2)} 
\end{equation*}

where 
\begin{align*}
	C &=  \kappa+\alpha \, \Vert h \Vert_{\mc C^1}(\Vert \tilde W \Vert_{{L^1}([0,1],L^\infty([0,1]))}+\Vert W \Vert_{L^{\infty}([0,1],{L^{1}}([0,1]))})\\
	& \le \kappa+\alpha \, \Vert h \Vert_{\mc C^1}(\Vert \tilde W \Vert_{{L^{\infty}}([0,1],L^\infty([0,1]))}+\Vert W \Vert_{L^{\infty}([0,1],{L^{\infty}}([0,1]))}).
\end{align*}
\end{proof}

\begin{appendices}
\section{Some Useful Estimates}
\begin{lemma}\label{Lem:BoundUnifExpMaps}
Suppose that $F\in \mc C^3(\T,\T)$ and $|F'(x)|\ge\sigma>1$ and $\|F\|_{\mc C^3}<K$. Then there are $M_1$ -- depending on $\sigma$ and $K$ only -- and $M_2>0$ depending on $\sigma$, $K$, and $M_1$ -- such that
\[
F_*(\mc B_{BV^1,M_1})\subset \mc B_{BV^1,M_1}
\]
and
\[
F_*(\mc B_{BV^1,M_1}\cap\mc B_{BV^2,M_2})\subset\mc B_{BV^1,M_1}\cap\mc B_{BV^2,M_2}.
\]
\end{lemma}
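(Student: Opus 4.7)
The plan is to establish standard Lasota--Yorke type inequalities for the transfer operator $F_*$ on the spaces $\mc B_{BV^1}$ and $\mc B_{BV^2}$, and then to choose $M_1$ and $M_2$ large enough that the corresponding balls are forward-invariant. Throughout, one exploits the fact that $F_*$ contracts the $L^1$ norm, and in particular preserves the $L^1$ norm of positive densities of unit integral --- which is the implicit normalisation in Proposition \ref{Lem:UnifProponFiber}, where the lemma is applied to conditional densities $\phi_z$ of probability measures.

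First I would derive the $BV^1$ inequality by duality. Fix $g\in\mc C^1(\T,\R)$ with $\|g\|_\infty\le 1$, and use the change of variables $\int g'(y)(F_*\psi)(y)dy=\int g'(F(x))\psi(x)dx$. Writing $g'(F(x))=\bigl((g\circ F)/F'\bigr)'(x)+ g(F(x))F''(x)/F'(x)^2$ and noting that $h:=(g\circ F)/F'\in \mc C^1(\T)$ with $\|h\|_\infty\le \sigma^{-1}$, the duality characterisation of $|\cdot|_{BV^1}$ together with the bounds $|F'|\ge\sigma$ and $\|F''/(F')^2\|_\infty\le K/\sigma^2$ yield
\[
|F_*\psi|_{BV^1}\le \sigma^{-1}|\psi|_{BV^1}+K_d\|\psi\|_{L^1},
\]
for a constant $K_d=K_d(\sigma,K)$. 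Using $\|F_*\psi\|_{L^1}\le\|\psi\|_{L^1}$ and the normalisation $\|\psi\|_{L^1}=1$, one obtains $\|F_*\psi\|_{BV^1}\le \sigma^{-1}\|\psi\|_{BV^1}+K_d$, and the choice $M_1:=K_d\sigma/(\sigma-1)$ produces $F_*(\mc B_{BV^1,M_1})\subset \mc B_{BV^1,M_1}$.

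For the $BV^2$ estimate, I would repeat the same strategy with test functions $g\in \mc C^2(\T,\R)$, $\|g\|_\infty\le 1$, and perform two successive manipulations of the Markov kernel: this writes $g''(F(x))$ as the second derivative of a $\mc C^2$ function whose sup norm is controlled by $\sigma^{-2}$, plus lower-order remainders involving $F''$ and $F'''$ divided by powers of $F'$. Bounding these via $|F'|\ge\sigma$ and $\|F\|_{\mc C^3}\le K$ gives
\[
|F_*\psi|_{BV^2}\le \sigma^{-2}|\psi|_{BV^2}+C_1(\sigma,K)|\psi|_{BV^1}+C_2(\sigma,K)\|\psi\|_{L^1}.
\]
Restricting to $\psi\in \mc B_{BV^1,M_1}\cap \mc B_{BV^2,M_2}$, so that $|\psi|_{BV^1}\le M_1$ and $\|\psi\|_{L^1}\le 1$, one obtains $\|F_*\psi\|_{BV^2}\le \sigma^{-2}\|\psi\|_{BV^2}+C_1M_1+C_2+1$, and any $M_2\ge (C_1M_1+C_2+1)\sigma^2/(\sigma^2-1)$ gives invariance of $\mc B_{BV^1,M_1}\cap \mc B_{BV^2,M_2}$, with $M_2$ depending on $\sigma$, $K$ and $M_1$ only.

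The main technical obstacle is the bookkeeping for the $BV^2$ Lasota--Yorke inequality: the two applications of duality/integration by parts produce a number of cross-terms involving $F''$, $F'''$, $(F')^{-2}$ and $(F')^{-3}$ that must be organised so that the highest-variation contribution genuinely carries the contraction factor $\sigma^{-2}$ while all remainders are absorbed into the $|\psi|_{BV^1}$ and $\|\psi\|_{L^1}$ terms with constants depending only on $\sigma$ and $\|F\|_{\mc C^3}$. Once this is in place, the choice of $M_1$ and $M_2$ reduces to solving two linear inequalities.
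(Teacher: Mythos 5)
Your proof takes essentially the same route as the paper: derive Lasota--Yorke inequalities for $|\cdot|_{BV^1}$ and $|\cdot|_{BV^2}$ by duality, writing $g'\circ F$ and $g''\circ F$ as derivatives of test functions with sup-norm bounded by $\sigma^{-1}$ and $\sigma^{-2}$ respectively plus remainders involving $F''$, $F'''$ and negative powers of $F'$, then choose $M_1$ and $M_2$ large enough. Your explicit appeal to the normalisation $\|\psi\|_{L^1}=1$ is in fact a needed refinement: without restricting to probability densities -- which is how the lemma is applied in Proposition \ref{Lem:UnifProponFiber} -- the $L^1$ contribution $D\|\psi\|_{L^1}$ in the Lasota--Yorke estimate together with $\|F_*\psi\|_{L^1}\le\|\psi\|_{L^1}$ does not let $F_*$ preserve any ball $\mc B_{BV^1,M_1}$ (take $\psi$ a nonzero constant with $\|\psi\|_{L^1}=M_1$), a point the paper's proof passes over.
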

\begin{proof}Pick $\phi$ s.t. $|\phi|_{BV^1}<\infty$. The following Lasota-Yorke inequality holds 
\begin{equation}\label{Eq:LY1}
|F_*\phi|_{BV^1}\le \lambda_1|\phi|_{BV^1}+D\|\phi\|_{L^1}
\end{equation}
with $\lambda_{1}\in[0,1)$ and $R_1>0$. In fact, for any $g\in \mc C^{1}(\mb T, \mb R)$ with $\Vert g\Vert_{\infty} \le 1$, by definition
\[
|F_*\phi|_{BV^1}\le \int_{\mb T}g'\, (F_*\phi)= \int_\T g'\circ F \,\phi.
\]
We use the identity
\[
\left(\frac{g\circ F}{F'}\right)'=\frac{(g\circ F)'}{F'}-(g\circ F)\frac{F''}{(F')^{2}}
\]
to obtain 
\begin{align*}
|F_*\phi|_{BV^1}  & \le \int_{\mb T} \, \left(\frac{g\circ F}{F'}\right)' \varphi + \int_{\mb T}  \, (g\circ F)\frac{F''}{(F')^{2}} \, \varphi\\
& \le \sigma^{-1} \int_{\mb T} \, \tilde{g} \varphi + D \Vert \varphi\Vert_{L^1}\\
& = \lambda_{1} |\phi|_{BV^1} + D \Vert \varphi\Vert_{L^1}
\end{align*}
where $\tilde{g}:=\sigma\frac{g\circ F}{F'}$ with $\Vert \tilde{g}\Vert_{\infty}\le 1$ and in the second term we used $\Vert g\Vert_{\infty}\le 1.$ Note that $\lambda_{1}=\sigma^{-1}$, $D=\max \frac{|F''|}{|(F')^{2}|}.$

By definition of the seminorm $|\cdot|_{BV^2}$,
\begin{align*}
|F_*\phi|_{BV^2}&\le \int_\T g''(F_*\phi)= \int_\T g''\circ F \,\phi.
\end{align*}
for any $g\in \mc C^2(\T,\R)$ with $\Vert g \Vert_\infty\le 1$.

Notice:
\begin{align*}
g''\circ F&=\frac{(g'\circ F)'}{F'}=\left(\frac{[g'\circ F]}{F'}\right)'+\frac{g'\circ F}{(F')^2}F''\\
&=\left(\frac{[(\frac{g\circ F}{F'})'+\frac{g\circ F}{(F')^2}F'']}{F'}\right)'+\frac{(g\circ F)'}{(F')^3}F''\\
&=\left(\frac{(\frac{g\circ F}{F'})'}{F'}\right)'+\left(\frac{\frac{g\circ F}{(F')^2}}{F'}F''\right)'+\left(\frac{g\circ F}{(F')^3}F''\right)'+3\frac{g\circ F}{(F')^4}F''-\frac{g\circ F}{(F')^3}F'''\\
&=\left(\frac{\frac{g\circ F}{F'}}{F'}\right)''+\left(\frac{\frac{g\circ F}{F'}}{(F')^2}F''\right)'+\left(\frac{\frac{g\circ F}{F'}}{(F')^2}F''\right)'+\left(\frac{g\circ F}{(F')^3}F''\right)'+3\frac{g\circ F}{(F')^4}F''-\frac{g\circ F}{(F')^3}F'''\\
&=\left(\frac{g\circ F}{(F')^2}\right)''+3\left(\frac{{g\circ F}}{(F')^3}F''\right)'+3\frac{g\circ F}{(F')^4}F''-\frac{g\circ F}{(F')^3}F'''
\end{align*}

The above implies that 
\begin{equation}\label{Eq:LY2}
|F_*\phi|_{BV^2}\le \lambda_2 |\phi|_{BV^2}+R_2|\phi|_{BV^1}+R_3\|\phi\|_{L^1}
\end{equation}
where $\lambda_2\in[0,1)$, $R_2>0$, and $R_3>0$ depend only on $\sigma$ and $K$. 

To conclude, one can first infer from \eqref{Eq:LY1} the existence of $\bar M_1>0$ such that $\mc B_{BV^1,M_1}$ is invariant for any $M_1>\bar M_1$, and for every $M_1>0$  the existence of $\bar M_2>0$ such that $\mc B_{BV^1,M_1}\cap\mc B_{BV^2,M_2}$ is invariant for any $M_2>\bar M_2$, and this implies the statement of the lemma.
\end{proof}

\begin{lemma}\label{Lem:RelationL1andBV1}
Let's consider $\varphi_{z},\varphi_{\bar{z}}\in L^{1}(\mathbb{T})$, then 
\begin{equation}\label{Eq:EstL1wrtBV1}
\|\phi_z-\phi_{\bar z}\|_{L^1}\le 2|\phi_z-\phi_{\bar z}|_{BV^1},
\end{equation}
so in particular, $\|\phi_z-\phi_{\bar z}\|_{BV^1}\le 3|\phi_z-\phi_{\bar z}|_{BV^1}$.
\end{lemma}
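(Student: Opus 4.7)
The plan is to first observe that the stated inequality can only hold under the implicit assumption that $\int_\T (\varphi_z - \varphi_{\bar z}) = 0$ (otherwise a nonzero constant function would give $|\psi|_{BV^1}=0$ but $\|\psi\|_{L^1}>0$). This is automatically satisfied in every invocation of the lemma in the paper, where $\varphi_z$ and $\varphi_{\bar z}$ are probability densities, and I would state this explicitly at the start of the proof. Setting $\psi := \varphi_z - \varphi_{\bar z}$, the goal is to prove $\|\psi\|_{L^1} \le 2 |\psi|_{BV^1}$; in fact the argument below yields the sharper bound $|\psi|_{BV^1}$.

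The approach is a classical Poincar\'e-type duality trick. Starting from the dual representation \eqref{Eq:L^1Norm}, I would take an arbitrary $g \in \mc C^0(\T,\R)$ with $\|g\|_\infty \le 1$ and use $\int \psi = 0$ to rewrite
\[
\int_\T g\,\psi \;=\; \int_\T (g - \bar g)\,\psi, \qquad \bar g := \int_0^1 g(y)\,dy.
\]
Because $g-\bar g$ has zero mean, its primitive
\[
G(x) := \int_0^x (g(y)-\bar g)\,dy
\]
is well-defined, $\mc C^1$, and periodic on $\T$ (since $G(0)=G(1)=0$), with $G'=g-\bar g$; in particular $G$ belongs to the class of admissible test functions in the definition of $|\cdot|_{BV^1}$ after rescaling.

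The quantitative heart of the proof is the bound $\|G\|_\infty \le 1$. Using both representations $G(x) = \int_0^x (g-\bar g) = -\int_x^1 (g-\bar g)$ gives $|G(x)| \le \min(x, 1-x)\cdot\|g-\bar g\|_\infty$, and the elementary estimate $\|g-\bar g\|_\infty \le 2\|g\|_\infty \le 2$ then yields $|G(x)| \le 2\min(x,1-x) \le 1$. With $G$ now admissible (up to the rescaling by $\|G\|_\infty$) in the definition of $|\psi|_{BV^1}$, I conclude
\[
\int_\T g\,\psi \;=\; \int_\T G'\,\psi \;\le\; \|G\|_\infty \cdot |\psi|_{BV^1} \;\le\; |\psi|_{BV^1},
\]
and taking supremum over $g$ gives $\|\psi\|_{L^1} \le |\psi|_{BV^1} \le 2|\psi|_{BV^1}$. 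The second claim $\|\psi\|_{BV^1} \le 3|\psi|_{BV^1}$ is then immediate from $\|\psi\|_{BV^1} = |\psi|_{BV^1} + \|\psi\|_{L^1}$.

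No substantive obstacle is anticipated; the only delicate issue is book-keeping the zero-mean hypothesis on $\psi$ (which follows in every application from $\varphi_z, \varphi_{\bar z}$ being probability densities) and verifying the periodicity and $\mc C^1$ regularity of the primitive $G$ so that it is genuinely admissible as a test function for the $BV^1$ seminorm.
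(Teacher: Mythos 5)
Your proof is correct and takes essentially the same route as the paper: pass to the dual formulation of $\|\cdot\|_{L^1}$, subtract the mean from the test function $g$, and integrate against its primitive, which is an admissible $\mc C^1$ test function for $|\cdot|_{BV^1}$. You rightly flag that the zero-mean hypothesis on $\psi=\varphi_z-\varphi_{\bar z}$ is needed but absent from the lemma's statement (the paper's own proof silently invokes it by taking $\varphi_z,\varphi_{\bar z}$ to be probability densities), and your sharper estimate $\|G\|_\infty\le 2\min(x,1-x)\le 1$ yields the constant $1$ where the paper, using only $\|g-\bar g\|_\infty\le 2$, settles for the weaker constant $2$.
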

\begin{proof}
First, we claim that
\[
\{g\in \mc C^0(\T,\R):\, | g |_\infty\le1,\,\int_\T g=0\}\subset \{h':\,h\in \mc C^1(\T,\R),\,|h|_\infty\le 1\};
\]
pick any $g$ in the first set,  and define $h(x)=\int_0^xg(s)ds$. It is immediate to check that for $x\neq 0$, $h'(x)=g(x)$ and that $h'(0^+)=g(0)$; furthermore
\[
h'(0^-)=\lim_{h\rightarrow 0}\frac{1}{h}\int_0^{1-h}g(s)ds=\lim_{h\rightarrow 0}\frac{\int_0^{1}g(s)ds-\int_{1-h}^0g(s)ds}{h}=\lim_{h\rightarrow 0}\frac{-\int_{1-h}^0g(s)ds}{h}=g(0).
\] 
So, $h'$ is continuous in $0$. Now we prove  inequality \eqref{Eq:EstL1wrtBV1}. Since $\phi_z,\,\phi_{\bar z}$ are both probability densities
\begin{align*}
\|\phi_z-\phi_{\bar z}\|_{L^1}& =\sup_{| g |_\infty\le 1}\int_\T g(\phi_z-\phi_{\bar z}) \\
& = \sup_{| g |_\infty\le 1}\int_\T (g-\int g)(\phi_z-\phi_{\bar z})\\
& \le  \sup_{\substack {| g |_\infty\le 2\\\int g=0}}\int_\T g(\phi_z-\phi_{\bar z})\\
& \le \sup_{|h|_{\infty}\le 2}\int_\T h'(\phi_z-\phi_{\bar z})\\
& \le 2|\phi_z-\phi_{\bar z}|_{BV^1}
\end{align*}

where we used that $|g-\int g|_{\infty}\le 2$. 
\end{proof}

\begin{lemma}\label{VariationOfDensity}
    Suppose that  $h\in\mathcal{C}^{k+1}(\mathbb{T}\times\mathbb{T},\mathbb{R})$ and $f\in\mathcal{C}^{k}(\mathbb{T},\mathbb{T})$. Consider $\nu,\mu\in \mc M_{1,\Leb}([0,1]\times \T)$ absolutely continuous. Then for a.e. $z\in[0,1]$ the following holds
    $$\Vert F_{\nu,z}-F_{\mu,z} \Vert_{\mc{C}^k} \leq \alpha \, \Vert W\Vert_{L^{\infty}([0,1],L^1([0,1],\R))} \Vert h\Vert_{\mc{C}^{k+1}} \mathop{\mathrm{ess~sup}}\limits_{z'\in [0,1]}\Vert \nu_{z'}-\mu_{z'}\Vert_{W^1}.$$
    If $\nu,\,\mu$ are absolutely continuous and $\phi,\,\psi$ denote their densities, the above can be bounded as
    $$\Vert F_{\nu,z}-F_{\mu,z} \Vert_{\mc{C}^k} \leq \alpha \, \Vert W\Vert_{L^{\infty}([0,1],L^1([0,1],\R))} \Vert h\Vert_{\mc{C}^{k+1}} \mathop{\mathrm{ess~sup}}\limits_{z'\in [0,1]}\sup_{x\in\T}|\phi_{z'}(x)-\psi_{z'}(x)|.$$
\end{lemma}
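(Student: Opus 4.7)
The plan is to estimate each fiberwise derivative $\partial_x^{(i)}[F_{\nu,z}-F_{\mu,z}]$ for $0\le i\le k$ separately and then sum the resulting bounds. Since $h\in\mathcal C^{k+1}(\T\times\T,\R)$ and the operator $F_{\nu,z}$ is defined through integration in the second variable of $h$ against the disintegration $\nu_{z'}$, differentiating under the integral sign yields, for a.e.\ $z\in[0,1]$,
\[
F_{\nu,z}^{(i)}(x)-F_{\mu,z}^{(i)}(x)=\alpha\int_0^1 dz'\, W(z,z')\int_{\T} \partial_1^{(i)} h(x,y)\,d(\nu_{z'}-\mu_{z'})(y).
\]
The key observation is that for each fixed $x\in\T$ and each $0\le i\le k$, the map $y\mapsto\partial_1^{(i)} h(x,y)$ is $\mathcal C^1$, with both $\|\partial_1^{(i)} h(x,\cdot)\|_\infty$ and $\Lip(\partial_1^{(i)} h(x,\cdot))$ bounded by $\|h\|_{\mathcal C^{k+1}}$.

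Next, I would use the duality definition of the Wasserstein norm: if $g\in\mathcal C^0(\T,\R)$ satisfies $\|g\|_\infty\le M$ and $\Lip(g)\le M$, then $g/M$ is admissible in the supremum defining $\|\cdot\|_{W^1}$, so
\[
\left|\int_{\T} g\, d(\nu_{z'}-\mu_{z'})\right|\le M\,\|\nu_{z'}-\mu_{z'}\|_{W^1}.
\]
Applying this with $g=\partial_1^{(i)} h(x,\cdot)$ and $M=\|h\|_{\mathcal C^{k+1}}$, I obtain
\[
\left|\int_{\T}\partial_1^{(i)} h(x,y)\,d(\nu_{z'}-\mu_{z'})(y)\right|\le \|h\|_{\mathcal C^{k+1}}\,\|\nu_{z'}-\mu_{z'}\|_{W^1}.
\]
Plugging this back, taking absolute values and using that $\int_0^1 |W(z,z')|\,dz'=\|W(z,\cdot)\|_{L^1}\le \|W\|_{L^\infty([0,1],L^1([0,1]))}$ for a.e.\ $z$, and pulling the $z'$-dependent term out via the essential supremum, gives
\[
\sup_{x\in\T}\left|F_{\nu,z}^{(i)}(x)-F_{\mu,z}^{(i)}(x)\right|\le |\alpha|\,\|W\|_{L^\infty([0,1],L^1([0,1]))}\,\|h\|_{\mathcal C^{k+1}}\,\mathop{\mathrm{ess~sup}}_{z'\in[0,1]}\|\nu_{z'}-\mu_{z'}\|_{W^1}.
\]
Summing over $0\le i\le k$ (and absorbing the factor $k+1$ into $\|h\|_{\mathcal C^{k+1}}$) yields the first claimed inequality.

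For the second inequality, I would apply the standard comparison between the Wasserstein and $L^1$ distances for absolutely continuous measures, namely $\|\nu_{z'}-\mu_{z'}\|_{W^1}\le \|\phi_{z'}-\psi_{z'}\|_{L^1(\T)}$ (which follows immediately from the duality formula, since any admissible $g$ in the $W^1$ norm satisfies $\|g\|_\infty\le 1$), followed by the trivial bound $\|\phi_{z'}-\psi_{z'}\|_{L^1(\T)}\le \sup_{x\in\T}|\phi_{z'}(x)-\psi_{z'}(x)|$, which is valid because $\T$ has unit Lebesgue measure. This step is essentially bookkeeping; the only substantive part of the argument is the duality estimate above, and no serious obstacle is expected since all the regularity is built into the hypothesis $h\in\mathcal C^{k+1}$.
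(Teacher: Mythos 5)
Your proposal is correct and takes essentially the same route as the paper: differentiate $F_{\nu,z}-F_{\mu,z}$ under the integral sign, view $\partial_1^{(i)}h(x,\cdot)$ as a bounded Lipschitz test function to invoke the duality definition of the $W^1$ norm, use H\"older in $z'$ to pull out $\|W(z,\cdot)\|_{L^1}$, and for the second inequality bound the Wasserstein distance by the sup-norm of the density difference. One cosmetic remark: the parenthetical ``absorbing the factor $k+1$ into $\|h\|_{\mathcal C^{k+1}}$'' is loosely phrased --- the cleaner statement is that $\sum_{i=0}^{k}\bigl(\sup|\partial_1^{(i)}h|+\sup|\partial_1^{(i)}\partial_2 h|\bigr)\le \|h\|_{\mathcal C^{k+1}}$ already, so no extra factor ever appears; the paper glosses over the same constant in the same way, so this is not a gap.
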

\begin{proof}
    Recall that the $k$-th derivative of the dynamics in the fiber $\{z\}\times\mathbb{T}$ is given by
\begin{equation}\label{iderivative}
F^{(k)}_{\nu,z}(x)=f^{(k)}(x)+\alpha \int_{0}^{1}\int_{\mathbb{T}}dz' \, \, W(z,z') \partial_{1}^{(k)}h(x,y)d\nu_{z'}(y)
\end{equation}
which implies that for any $x\in\mathbb{T}$ and every $z\in [0,1]$ such that $W(z,\cdot)\in L^{1}(\T)$
\begin{align*}\label{estimativafuncionesenfibras}
   \left\vert F^{(k)}_{\nu,z}(x)-F^{(k)}_{\mu,z}(x) \right\vert & = \left\vert \alpha \int_{0}^{1}\int_{\mathbb{T}}\, dz' \, \, W(z,z') \, \partial_{1}^{(k)} h(x,y) \, d(\nu_{z'}(y)-\mu_{z'}(y)) \right\vert  \nonumber \\
         & \leq \alpha\,\Vert W(z,\cdot)\Vert_{L^1}  \, \Vert \partial^{(k+1)}_{1} h(x,\cdot) \Vert_{\infty} \Vert W(z,\cdot)\Vert_{L^1} \mathop{\mathrm{ess~sup}}\limits_{z'\in[0,1]}\Vert \nu_{z'}(\cdot)-\mu_{z'}(\cdot)\Vert_{W^1} \nonumber \\
         & \leq \alpha \, \Vert h\Vert_{\mc C^{k+1}} \Vert W(z,\cdot)\Vert_{L^1}\mathop{\mathrm{ess~sup}}\limits_{z'\in[0,1]}\Vert \nu_{z'}(\cdot)-\mu_{z'}(\cdot)\Vert_{W^1} \nonumber\\
         & \leq \alpha \, \Vert h\Vert_{\mc C^{k+1}} \| W\|_{L^{\infty}([0,1],L^1([0,1],\R))} \mathop{\mathrm{ess~sup}}\limits_{z'\in[0,1]}\Vert \nu_{z'}-\mu_{z'}\Vert_{W^1}
\end{align*}

The second statement follows from the first noticing that for any measures $\nu_{z'},\,\mu_{z'}$ with densities $\phi_{z'},\,\psi_{z'}$ one has $\|\nu_{z'}-\mu_{z'}\|_{W^1}\le \sup_{x\in \T}|\phi_{z'}(x)-\psi_{z'}(x)|$. 
\end{proof}

\begin{proposition}\label{psig}
Let $\psi \in L^1(\mathbb{T})$. Then,
\[
\sup_{\substack{\mathrm{Lip}(h) \leq 1 \\ \|h\|_\infty \leq 1}}  \int_{\mathbb{T}} h(s)\psi(s)\,ds 
\leq 
\sup_{\substack{g \in \mc C^1(\mathbb{T}) \\ \|g\|_\infty \leq 1}} \int_{\mathbb{T}} g'(s)\psi(s)\,ds.
\]
\end{proposition}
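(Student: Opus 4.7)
The plan is to show that every admissible test function $h$ on the LHS can be replaced by the derivative $g'$ of an admissible test function $g$ on the RHS without changing the value of the integral against $\psi$; taking suprema then yields the inequality. This reduction works cleanly whenever $\psi$ has zero mean, which is precisely the setting in which the proposition is invoked in Lemma \ref{RelativamenteCompacto} (there $\psi = \varphi_z - \varphi_{\bar z}$ is a difference of probability densities, so $\int \psi = 0$).

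First I would subtract the mean: given $h$ with $\mathrm{Lip}(h) \leq 1$ and $\|h\|_\infty \leq 1$, set $\bar h := \int_{\mathbb{T}} h$ and $\tilde h := h - \bar h$. Since $\int \tilde h = 0$, the antiderivative
\[
g(x) := \int_0^x \tilde h(s)\,ds
\]
satisfies $g(0) = g(1) = 0$ and hence descends to a function on $\mathbb{T}$. Because $\tilde h$ is continuous (being Lipschitz), $g \in \mathcal{C}^1(\mathbb{T})$ with $g' = \tilde h$ pointwise.

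The key estimate is $\|g\|_\infty \leq 1$, so that $g$ qualifies as a test function for the RHS supremum. Using the two equivalent representations $g(x) = \int_0^x \tilde h$ and $g(x) = -\int_x^1 \tilde h$, one obtains the symmetric bound $|g(x)| \leq \tfrac{1}{2}\int_0^1 |\tilde h|$; combined with the triangle inequality $\int |\tilde h| \leq \|h\|_{L^1} + |\bar h| \leq 2\|h\|_\infty \leq 2$, this gives $\|g\|_\infty \leq 1$ as required.

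To conclude, since $\int \psi = 0$ one has $\int g' \psi = \int \tilde h \psi = \int h \psi - \bar h \int \psi = \int h \psi$. Therefore the supremum over admissible $h$ on the LHS is dominated by the supremum over admissible $g$ on the RHS, proving the claim. The main (minor) obstacle is precisely the $\|g\|_\infty \leq 1$ bound: a naive antiderivative of $h$ could have arbitrarily large sup norm and would thus fail admissibility, but the centred construction (subtracting the mean, and using the symmetric integral representation exploiting the periodicity of $\mathbb{T}$) tames it uniformly. The Lipschitz hypothesis on $h$ enters only through the continuity it provides for $\tilde h$, which is what upgrades the antiderivative from merely absolutely continuous to $\mathcal{C}^1$.
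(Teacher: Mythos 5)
Your proof is correct, and it is not only different from but also more complete than the paper's own argument, so the comparison is worth making explicit.

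First, you are right that the proposition as literally stated fails without an extra hypothesis: for $\psi \equiv 1$ the left side equals $1$ (take $h\equiv 1$), while the right side vanishes because $\int_\T g' = 0$ for every periodic $g$. Your observation that one needs $\int_\T \psi = 0$ is sharp, and you correctly note that in the paper's sole application (Lemma \ref{RelativamenteCompacto}) $\psi = \varphi_z - \varphi_{\bar z}$ is a difference of probability densities, so the hypothesis holds.

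The paper's proof proceeds by mollification: it produces $g_n \in \mathcal{C}^\infty(\T)$ with $\|g_n\|_\infty \le 1$, $\|g_n'\|_\infty \le 1$, $g_n \to h$ uniformly, and then invokes dominated convergence to conclude $\int g_n \psi \to \int h\psi$. As written, this does not finish the argument: the right-hand side of the proposition is a supremum of $\int g'\psi$ over $g$ with $\|g\|_\infty \le 1$, whereas what has been produced is a sequence of numbers $\int g_n \psi$ with no derivative anywhere in sight. The missing step is exactly the antiderivative construction you carry out. Your route is more direct: since $\tilde h = h - \int h$ is Lipschitz, hence continuous, its primitive $g(x)=\int_0^x \tilde h$ is already $\mathcal{C}^1$ on $\T$ (periodicity coming from $\int\tilde h = 0$), so no smoothing is needed at all. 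The two-sided representation $g(x) = \int_0^x \tilde h = -\int_x^1 \tilde h$ gives $\|g\|_\infty \le \tfrac12\int|\tilde h| \le 1$, and $\int g'\psi = \int \tilde h\,\psi = \int h\,\psi$ precisely because $\int\psi = 0$. This is clean and self-contained. If one insisted on following the paper's mollification route, one would still have to append your centering-and-antiderivative argument applied to each $g_n$ (replace $g_n$ by $g_n - \int g_n$, integrate, bound the $\sup$-norm of the primitive by $1$, and use $\int\psi=0$ to kill the mean term), so your approach is strictly the more economical one.
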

\begin{proof}
Let $h \in \mathrm{Lip}(\mathbb{T})$ with $\|h\|_\infty \le 1$ and $\mathrm{Lip}(h) \le 1$. Since Lipschitz functions on the torus can be uniformly approximated by $\mc C^1(\mb T)$ functions with controlled derivatives, there exists a sequence $\{g_n\} \subset \mc C^1(\mathbb{T})$ such that
\begin{itemize}
    \item $g_n \in \mc C^\infty(\mathbb{T}) \subset \mc C^1(\mathbb{T})$,
    \item $\|g_n\|_\infty \leq \|h\|_\infty \leq 1$,
    \item $\|g_n'\|_\infty \leq \mathrm{Lip}(h) \leq 1$,
    \item $g_n \to h$ uniformly as $n \to \infty$.
\end{itemize}

Since $\psi \in L^1(\mathbb{T})$ and $g_n \to h$ uniformly, by dominated convergence:
\[
\lim_{n\to\infty}\int_{\mathbb{T}} g_n(s)\psi(s)\,ds =\int_{\mathbb{T}} h(s)\psi(s)\,ds.
\]
\end{proof}

\section{Projective cones}

For completeness, we recall some facts on projective cones and the Hilbert metric. We follow \cite{viana1997stochastic}.
Let $E$ be a vector space, any subset $\mathcal{V}\subset E\setminus \{0\}$ is a cone if $v\in \mathcal{V}$ and $t>0$ then $tv\in\mathcal{V}.$
The cone $\mathcal{V}$ is convex if
\begin{align*}
    v_{1}, v_{2} \in \mathcal{V} \ \ \mbox{and} \ \ t_{1},t_{2}>0 \ \ \mbox{then} \ \ t_{1}v_{1}+t_{2}v_{2}\in \mathcal{V}.
\end{align*}
The closure $\overline{\mathcal{V}}$ of a cone $\mathcal{V}$ is defined as
\begin{align*}
    \overline{\mathcal{V}}:=\{ u\in E : \ \exists v\in \mathcal{V} \ \mbox{and} \ (t_{n})_{n\in\mathbb{N}} \searrow 0 \ \mbox{such that} \ u+t_{n}v \in \mathcal{V} \ \mbox{for all} \ n \geq 1 \}.
\end{align*}
We assume that the cone $\mathcal{V}$ satisfies $
    \overline{\mathcal{V}}\cap (- \overline{\mathcal{V}})=\{0\}.$
Consider $v_{1}, v_{2} \in \mathcal{V}$ and define 
    $$\alpha(v_{1},v_{2}):=\sup \{ t>0: v_{2}-tv_{1}\in \mathcal{V}\}; \qquad
    \beta(v_{1},v_{2}):=\inf \{ s>0: sv_{1}-v_{2}\in \mathcal{V}\},$$
with $\sup \emptyset =0$ and $\inf \emptyset=+\infty$.
The Hilbert metric is defined as 
$\theta:\mathcal{V}\times\mathcal{V}\to \mathbb{R}\cup \{\infty\}$ by
\begin{equation*}
    \theta(v_{1},v_{2}) = 
\begin{cases} 
    \log \dfrac{\beta(v_{1},v_{2})}{\alpha(v_{1},v_{2})} & \text{if } \alpha >0  \text{ and } \beta<+\infty \\ 
    +\infty & \text{if } \alpha=0 \text{ or } \beta=+\infty.  
\end{cases}
\end{equation*}

Let $X$ be a compact metric space and consider $a>0$, $\nu\in (0,1]$ and define
    $$\mathcal{C}=\mathcal{C}(a,\nu):= \{ \varphi\in E : \varphi >0 \ \text{and} \ \log \varphi \ \text{is} \ (a, \nu)\text{-Hölder}\}$$
    the condition that $\log \varphi$ is $(a,\nu)$-Hölder means that, for all $x,y \in X$
    $$\exp(-ad(x,y)^{\nu})\leq \dfrac{\varphi(x)}{\varphi(y)}\leq \exp(ad(x,y)^{\nu}).$$
    In particular, if we consider $E=\mathcal{C}(\mathbb{T})$, $\nu=1$ and $d(x,y)=\vert x-y\vert$ denotes the Euclidean distance on $\mathbb{T}$ between $x$ and $y$ we  we obtain the usual cone of log-Lipschitz functions 
    $$\mathcal{V}_{a}:=\mathcal{C}(a,1)=\left\{\psi \in \mathcal{C}^0(\mathbb{T},\R^): \frac{\psi(x)}{\psi(y)}\le e^{a|x-y|} \right\}.$$

\begin{lemma}\label{C0ANDhm}
Suppose $\phi,\,\psi\in \mc V_a$ are probability density, then there is a constant $C>0$ --  depending on $a>0$ only -- such that 
\[
\sup_{x\in\T}|\phi(x)-\psi(x)|\le C\theta_a(\phi,\psi).
\]
\end{lemma}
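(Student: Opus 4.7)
The approach is to convert Hilbert-metric information into a pointwise estimate, exploiting the fact that both $\phi$ and $\psi$ are probability densities. Write $t := \theta_a(\phi,\psi)$; the bound is trivial when $t = \infty$, so assume $t < \infty$. By the definition of $\theta_a$ recalled in the appendix, for every $\epsilon > 0$ there exist $0 < \alpha_\epsilon \le \beta_\epsilon$ with $\log(\beta_\epsilon/\alpha_\epsilon) \le t + \epsilon$ such that $\psi - \alpha_\epsilon \phi$ and $\beta_\epsilon \phi - \psi$ both belong to $\mc V_a$, and in particular are pointwise nonnegative. Passing to the limit $\epsilon \to 0$ yields $0 < \alpha \le \beta$ with $\beta = \alpha e^t$ and the pointwise sandwich $\alpha\phi(x) \le \psi(x) \le \beta\phi(x)$ for all $x\in\T$.

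Since $\int\phi = \int\psi = 1$, integrating this sandwich immediately gives $\alpha \le 1 \le \beta$. From the same sandwich I would extract the fiberwise estimate
\[
|\phi(x)-\psi(x)| \le \max(1-\alpha,\,\beta-1)\,\phi(x) \le (\beta-\alpha)\,\phi(x) = \alpha(e^t-1)\,\phi(x) \le (e^t-1)\,\phi(x),
\]
reducing the problem to obtaining a uniform pointwise upper bound on $\phi$. This is where the cone condition does the work: the normalization $\int\phi = 1$ forces the existence of some $y_0\in\T$ with $\phi(y_0)\le 1$, and then the log-Lipschitz property in the definition of $\mc V_a$ gives $\phi(x) \le e^{a\,\mathrm{diam}(\T)}\phi(y_0) \le e^{a/2}$ for every $x$. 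Combining these two ingredients produces $\sup_{x\in\T}|\phi(x)-\psi(x)| \le e^{a/2}(e^t-1)$.

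The last step is to convert this exponential-in-$t$ bound into a linear one by a short case split. For $t\in[0,1]$ the elementary inequality $e^t - 1 \le e\,t$ suffices. For $t \ge 1$ one uses the crude triangle bound $|\phi-\psi| \le \phi + \psi \le 2e^{a/2}$ together with $t \ge 1$ to obtain $|\phi-\psi| \le 2e^{a/2}t$. Setting $C := e^{a/2+1}$ (or any constant majorizing both $e\cdot e^{a/2}$ and $2e^{a/2}$) absorbs both regimes and depends only on $a$, as required.

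The only step requiring some care is the passage from the supremum/infimum defining $\alpha(\phi,\psi)$ and $\beta(\phi,\psi)$ to the genuine pointwise sandwich on $\T$; this is essentially a closure argument inside $\mc V_a$, using that a monotone limit of nonnegative log-Lipschitz functions stays nonnegative and log-Lipschitz. Everything else is routine calculus together with integration against Lebesgue measure.
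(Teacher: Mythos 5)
Your proof is correct and uses essentially the same ingredients as the paper's argument: the normalization $\int\phi = \int\psi = 1$ pinning $\alpha(\phi,\psi) \le 1 \le \beta(\phi,\psi)$, the cone condition forcing the uniform bound $\phi \le e^{a/2}$, and an elementary calculus step to turn the exponential in $\theta_a$ into a linear one. Your pointwise sandwich $\alpha\phi \le \psi \le \beta\phi$ is a slightly cleaner way to organize what the paper does through the ratio estimate $\psi/\phi - 1 \le C\log(\psi/\phi)$, with the extra virtue that it controls both signs of $\psi - \phi$ at once, whereas the paper's Step 2 bound is one-sided and the passage to $|\psi - \phi|$ is left implicit (a symmetric application in $\phi \leftrightarrow \psi$ is needed when $\psi < \phi$).
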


\begin{proof}
To rigorously establish this, we must follow these steps: 

\textit{Step 1.} Notice that 
\begin{equation}\label{RATIOdensities}
1\leq \sup_{x\in\mathbb{T}} \dfrac{\psi(x)}{\varphi(x)};
\end{equation}
in fact,  if $\sup_{x\in\mathbb{T}} \frac{\psi(x)}{\varphi(x)}<1,$ then for all $x\in \mathbb{T}$ we have that $\psi(x)<\varphi(x)$ which means they cannot be both probability density leading to a contradiction.

\textit{Step 2.} Given $x_{0}>0$ there is a constant depending on $x_{0}$ that is $C=C(x_{0})$, such that if $x<x_{0}$, then $x<C(x_{0})\log(1+x).$  In fact, let us consider the function 
$$l(x)=\log(x+1)-\dfrac{x}{x+1}$$
$l$ is increasing and $l(0)=0$ then $l(x) \geq l(0)=0$, thus,
$
\dfrac{x}{x+1}\leq \log(x+1)
$
and $x<(x_0+1)\log(x+1)$.

\textit{Step 3.} Since  $\varphi,\psi\in  \mathcal{V}_{a}$ we know that $\varphi(y)\leq \varphi(x)e^{a\vert x-y\vert}$ and $\psi(y)\leq\psi(x)e^{a\vert x-y\vert}$ for all $x,y \in \mathbb{T}.$ So, taking the supremum and infimum of the inequality, we obtain that
$$\sup_{y}\varphi(y)\leq e^{\frac{a}{2}} \inf_{x}\varphi(x) \ \ \ \text{and} \ \ \ \sup_{y}\psi(y)\leq e^{\frac{a}{2}} \inf_{x}\psi(x)$$
which implies, together with the fact that  $\varphi,\psi$ are densities, 
$$e^{-\frac{a}{2}} = e^{-\frac{a}{2}} \int_{\mathbb{T}} \varphi(y) dy \leq  e^{-\frac{a}{2}} \sup_{y}\varphi(y)\leq \inf_{x}\varphi(x)\leq \int_{\mathbb{T}} \varphi(x)dx = 1.$$
Analogously for the density $\psi,$ we obtain that
$$e^{-\frac{a}{2}}\leq \inf_{x}\varphi(x)\leq 1 \ \ \ \text{and} \ \ \ e^{-\frac{a}{2}}\leq \inf_{x}\psi(x)\leq 1$$
A similar argument to the above also shows that
$$1 \leq \sup_{x}\varphi(x)\leq e^{\frac{a}{2}} \ \ \ \text{and} \ \ \ 1 \leq \sup_{x}\psi(x)\leq e^{\frac{a}{2}}.$$
Thus, joining both inequalities, we obtain for all $x\in\mathbb{T}$ 
\begin{equation}
e^{-a}\leq \dfrac{\psi(x)}{\varphi(x)} \leq e^{a},
\end{equation}
in particular the ratio of the densities is upper bounded by some number that depends on the cone $\mathcal{V}_{a}$ and initially we saw that the ratio of the densities has to be greater than or equal to 1, so by \textit{Step 2}, there exists $C>0$ such that 
\begin{equation}\label{Eq:Rat-1Est}
\dfrac{\psi(x)}{\varphi(x)}-1\leq C\log \left(\dfrac{\psi(x)}{\varphi(x)}\right).
\end{equation}
Finally, inequality \eqref{RATIOdensities} implies that
\begin{equation}
1\leq \sup_{x\in \T} \dfrac{\psi(x)}{\varphi(x)} \leq \beta(\varphi,\psi),
\end{equation} 
and
$$\dfrac{\psi(x)}{\varphi(x)}\leq \beta(\varphi,\psi)\beta(\psi,\varphi)$$
then from \eqref{Eq:Rat-1Est}
\begin{equation}\label{finalexpre}
\dfrac{\psi(x)}{\varphi(x)}-1\leq C \, \theta_{a}(\varphi,\psi)
\end{equation}
and
\begin{equation}\label{C0andHILBERT}
 \vert\psi(x)-\varphi(x) \vert\leq C'\, \theta_{a}(\varphi,\psi).
\end{equation}
\end{proof}
\end{appendices}

\bibliographystyle{abbrv}

\bibliography{bibliography}

\end{document}